\newcommand{\R}{\mathbb{R}}
\newcommand{\E}{\mathbb{E}} 
\newcommand{\dd}{\mathrm{d}}
\newcommand{\ind}[1]{\mathds{1}_{\{#1\}}}
\newtheorem{theorem}{Theorem}[section]
\newtheorem{lemma}[theorem]{Lemma}
\newtheorem{remark}[theorem]{Remark}
\newtheorem{proposition}[theorem]{Proposition}
\newtheorem{corollary}[theorem]{Corollary}
\newtheorem{assumptionA}{Assumption}
\newtheorem{assumptionAglob}{Assumption}
\newtheorem{assumptionH}{Assumption} 
\newtheorem{assumptionO}{Assumption}
\title{Quasi-stationary distribution for kinetic SDEs with low regularity coefficients} 
\author[1]{Nicolas Champagnat\thanks{E-mail: nicolas.champagnat@inria.fr.}}
\author[2]{Tony Lelièvre\thanks{E-mail: tony.lelievre@enpc.fr}} 
\author[3]{Mouad Ramil\thanks{E-mail: ramil.mouad@gmail.com}}
\author[4]{Julien Reygner\thanks{E-mail: julien.reygner@enpc.fr}}
\author[5,6]{Denis Villemonais\thanks{E-mail: denis.villemonais@math.unistra.fr}}
\affil[1]{Universit\'e de Lorraine, CNRS, Inria, IECL, Nancy, France}
\affil[2]{CERMICS, \'Ecole des Ponts, IP Paris and Inria, Marne-La-Vall\'ee, France}
\affil[3]{Research Institute of Mathematics, Seoul National University, Seoul, Republic of Korea} 
\affil[4]{CERMICS, École des Ponts, IP Paris, Marne-La-Vall\'ee, France}
\affil[5]{Université de Strasbourg, IRMA, Strasbourg, France}
\affil[6]{Institut Universitaire de France}
\date{}
\begin{document}

\maketitle
\begin{abstract}
  We consider kinetic SDEs with low regularity coefficients in the setting recently introduced in~\cite{Chaudru2022}. For the solutions to such equations, we first prove a Harnack inequality. Using the abstract approach of~\cite{CV}, this inequality then allows us to prove, under a Lyapunov condition, the existence and uniqueness (in a suitable class of measures) of a quasi-stationary distribution in cylindrical domains of the phase space. We finally exhibit two settings in which the Lyapunov condition holds: general kinetic SDEs in domains which are bounded in position, and Langevin processes with a non-conservative force and a suitable growth condition on the force.  

\medskip
\noindent\textbf{Mathematics Subject Classification.} 35P05, 82C31, 47B07, 60H10. \medskip

\noindent\textbf{Keywords.} Langevin process, quasi-stationary distribution, Harnack inequality. 
\end{abstract} 
 
\section{Introduction}
In statistical physics, the evolution of a molecular system at some fixed temperature $T$ is often modeled by the Langevin process:
\begin{equation}\label{eq:Langevin_intro}
  \left\{
    \begin{aligned}
        &\mathrm{d}q_t=M^{-1} p_t \mathrm{d}t , \\
        &\mathrm{d}p_t=F(q_t) \mathrm{d}t -\gamma M^{-1}  p_t
        \mathrm{d}t +\sqrt{2\gamma \mathrm{k} T} \mathrm{d}B_t,
    \end{aligned}
\right.  
\end{equation}
where $q_t$ (resp. $p_t$) is the position (resp. momentum) vector of the particles at time $t\geq0$. If $N$ is the number of particles then $(q_t,p_t)\in\mathbb{R}^{2d}$ where $d=3N$. Additionally, $M\in\mathbb{R}^{d\times d}$ is a symmetric positive definite matrix (the mass matrix), $\gamma>0$ is the friction coefficient, $F:\mathbb{R}^d\to\mathbb{R}^d$ is the interaction force between particles, and $\mathrm{k}$ is the Boltzmann constant. Most often, the force $F$ is conservative i.e. there exists a potential function $V$ such that $F=-\nabla V$.

The sampling of long time trajectories of~\eqref{eq:Langevin_intro} is fundamental to understand and model the successive transitions between metastable states of a  molecular system. Such sampling procedures have numerous
applications in biology, chemistry and materials science~\cite{Tuck,LelSto16}. The main numerical challenge comes from the long time scales required to compute such transitions. In this regard, several methods have been developed. Some of them~\cite{lelievre2020mathematical} rely on the existence of a quasi-stationary distribution (QSD) for the metastable states of the system, which corresponds to the long-time equilibrium reached by the system when it is trapped in a metastable state. More precisely, for a given domain $D$ in $\R^{2d}$ and the associated exit time $\tau_\partial=\inf \{t>0 : (q_t,p_t) \not \in D\}$, a probability measure $\nu$ with support in $D$ is a QSD if and only if, for any positive $t$, for any measurable set $A \subset \R^{2d}$,
$${\mathbb P}_\nu ((q_t,p_t) \in A, t < \tau_\partial) = \nu(A) \, {\mathbb P}_\nu (t < \tau_\partial). $$
It is standard, see~\cite{Collet,le-bris-lelievre-luskin-perez-12}, that starting from the QSD, the exit time is exponentially distributed and independent of the exit point, and this can be used to sample the first exit event faster in wall-clock time using parallel in time algorithm. This motivates the two following questions: the first one concerns the existence of a unique QSD for the considered system and the second is related to the attainability of this QSD, namely the long-time convergence to this unique QSD for the process conditioned to remain in the domain.

In the recent literature, these questions have been successfully studied for processes of the form~\eqref{eq:Langevin_intro} (and variations thereof) in domains of the form
\begin{equation*}
    D=\mathcal{O}\times\mathbb{R}^d,
\end{equation*}
where $\mathcal{O}$ is a smooth open set of $\mathbb{R}^d$. We shall refer to such domains as \emph{cylindrical}. In particular,~\cite{lelievre2022quasi,guillin2020quasi} employ a spectral approach which relies on the interpretation of QSDs as the principal eigenvector of the infinitesimal generator associated with~\eqref{eq:Langevin_intro}, complemented with suitable boundary conditions on $\partial D$~\cite{LelRamRey}. A key technical point in this approach is to obtain compactness, or quasi-compactness, of the associated semigroup, see also~\cite{ferre2021more,benaim2021degenerate,guillin2023quasi,guillin2024generalized,CarGabMedMis}. On the other hand, a more probabilistic approach to the study of QSDs was developed in~\cite{CV}, it essentially rests on the combination of a Harnack inequality and a Lyapunov condition.

The initial motivation of this article is thus to apply the approach from~\cite{CV} to the Langevin process~\eqref{eq:Langevin_intro} in cylindrical domains. We shall actually work in the more general setting of kinetic SDEs, of the form
\begin{equation}\label{eq:Langevin}
  \left\{
    \begin{aligned}
        &\mathrm{d}q_t=p_t \, \mathrm{d}t , \\
        &\mathrm{d}p_t=F(q_t,p_t) \,\mathrm{d}t +\sigma(q_t,p_t) \, \mathrm{d}B_t,
    \end{aligned}
\right.
\end{equation}
in $\R^{2d}$. Indeed, up to a standard change of variables~\cite[Remark 3.37]{2010free}, which reduces the mass matrix $M$ to the identity of $\R^d$, the Langevin process~\eqref{eq:Langevin_intro} is a particular case of~\eqref{eq:Langevin}. Moreover, recent results from~\cite{Chaudru2022} will also allow us to address coefficients $F : \R^{2d} \to \R^d$, $\sigma : \R^{2d} \to \R^{d \times d}$  with low regularity. 

The precise definition of our setting, together with the statement of our main results, are presented in Section~\ref{ss:main-results}. The rest of the article is then organized as follows: the Harnack inequality is proved in Section~\ref{sec:proof Harnack}, the application of the results from~\cite{CV} is detailed in Section~\ref{sec:critere qsd}, and two examples of settings in which a Lyapunov function can be explicitly constructed are presented in Section~\ref{sec:example lyapunov}.

\section{Main results}\label{ss:main-results}

The first main result of this work, Theorem~\ref{thm:Harnack}, is a Harnack inequality for the solution to the kinetic SDE~\eqref{eq:Langevin} in (a local-in-position version of) the low-regularity setting of~\cite{Chaudru2022}. It is introduced in Section~\ref{sss:main-harnack}. The second main result, Theorem~\ref{thm:critere qsd}, establishes that in the setting of Theorem~\ref{thm:Harnack} and under a supplementary Lyapunov condition, the solution to the kinetic SDE~\eqref{eq:Langevin} admits a unique QSD in cylindrical domains. It is detailed in Section~\ref{sss:main-qsd}. Last, in Section~\ref{sss:main-examples}, we provide two examples of settings in which the Lyapunov condition can be established explicitly, which implies that the conclusion of Theorem~\ref{thm:critere qsd} holds. 

\subsection{Harnack inequality in the low regularity setting}\label{sss:main-harnack}

Let $d\geq1$. We use the generic notation $x\in\mathbb{R}^{2d}$ for pairs $(q,p) \in \R^{2d}$, and similarly the solution to~\eqref{eq:Langevin} is denoted by $(X_t)_{t \geq 0} = (q_t,p_t)_{t \geq 0}$.

We denote by $\vert\cdot\vert$ the Euclidean norm on $\R^d$, and use the notation $|x|=|q|+|p|$ for $x=(q,p) \in \R^{2d}$. We denote by $\Vert\cdot\Vert$ the operator norm on $\R^{d \times d}$, and by $\Vert g\Vert_\infty$ the supremum norm of a (possibly vector-valued) function $g$. 

\medskip
Our main assumptions on the SDE~\eqref{eq:Langevin} are the following.

\begin{assumptionA}[Drift coefficient]\label{ass:A1loc}
    The vector field $F:\R^{2d} \to \R^d$ is measurable and satisfies, for any bounded set $\mathcal{O}^\flat \subset \R^d$, 
    \begin{equation*}
        \sup_{x,x' \in \mathcal{O}^\flat\times \R^d, |x-x'| \leq 1} |F(x)-F(x')| < \infty.
    \end{equation*}
\end{assumptionA}

\begin{assumptionA}[Diffusion coefficient]\label{ass:A2loc}
    The matrix field $\sigma : \R^{2d} \to \R^{d \times d}$ is measurable, and for any bounded set $\mathcal{O}^\flat \subset \R^d$, 
    \begin{itemize}
        \item there exist $0 < c_1 \leq c_2$ such that for all $x\in\mathcal{O}^\flat\times \R^d$, $\xi\in\R^d$, 
        \begin{equation*}
            c_1\vert\xi\vert^2\leq\xi\cdot\sigma\sigma^T(x)\xi\leq c_2\vert\xi\vert^2;
        \end{equation*}
        \item there exist $\alpha\in(0,1)$ and $c_3>0$ such that for all $x = (q,p), x'=(q',p') \in \mathcal{O}^\flat\times \R^d$,
        \begin{equation*}
            \Vert \sigma(x)-\sigma(x')\Vert\leq c_3(\vert q-q'\vert^{\alpha/3}+\vert p-p'\vert^\alpha).
        \end{equation*}
    \end{itemize}
\end{assumptionA}

\begin{assumptionA}[Well-posedness of~\eqref{eq:Langevin}]\label{ass:sde}
    For any $x=(q,p) \in \R^{2d}$, there exists a unique weak solution $(X_t)_{t \geq 0} = (q_t,p_t)_{t \geq 0}$ to~\eqref{eq:Langevin} starting from $x$. 
    
    Moreover, denoting by $\mathbb{P}_x$ the probability measure under which $X_0=x$:
    \begin{enumerate}
        \item under $\mathbb{P}_x$, for any $t > 0$, the random variable $X_t$ has a density $p(t;x,\cdot)$ with respect to the Lebesgue measure on $\R^{2d}$;
        \item for any $t>0$ and almost every $y \in \R^{2d}$, the function $x \mapsto p(t;x,y)$ is continuous on $\R^{2d}$.
    \end{enumerate}
\end{assumptionA}

Notice that the continuity in $x$ of the transition density implies, by Scheffé's lemma, that the Markov process $(X_t)_{t \geq 0}$ has the strong Feller property.

\medskip
To present a particular case in which Assumption~\ref{ass:sde} holds, we introduce the following \emph{global-in-position} versions of Assumptions~\ref{ass:A1loc} and~\ref{ass:A2loc}.

\begin{assumptionAglob}[Drift coefficient]\label{ass:A1glob}
    The vector field $F:\R^{2d} \to \R^d$ is measurable and satisfies 
    \begin{equation*}
        c_0 := \sup_{x,x' \in \R^{2d}, |x-x'| \leq 1} |F(x)-F(x')| < \infty.
    \end{equation*}
\end{assumptionAglob}

\begin{remark}[Assumption~\ref{ass:A1glob} and linear growth]\label{rk:linear-growth}
    If $F$ satisfies Assumption~\ref{ass:A1glob}, then there exist $a, b \geq 0$ such that
    \begin{equation*}
        \forall x \in \R^{2d}, \qquad |F(x)| \leq a + b|x|.
    \end{equation*}
\end{remark}

\begin{assumptionAglob}[Diffusion coefficient]\label{ass:A2glob}
    The matrix field $\sigma : \R^{2d} \to \R^{d \times d}$ is measurable, and:
    \begin{itemize}
        \item there exist $0 < c_1 \leq c_2$ such that for all $x\in\R^{2d}$, $\xi\in\R^d$, 
        \begin{equation*}
            c_1\vert\xi\vert^2\leq\xi\cdot\sigma\sigma^T(x)\xi\leq c_2\vert\xi\vert^2;
        \end{equation*}
        \item there exist $\alpha\in(0,1)$ and $c_3>0$ such that for all $x = (q,p), x'=(q',p') \in \R^{2d}$,
        \begin{equation*}
            \Vert \sigma(x)-\sigma(x')\Vert\leq c_3(\vert q-q'\vert^{\alpha/3}+\vert p-p'\vert^\alpha).
        \end{equation*}
    \end{itemize}
\end{assumptionAglob}

The setting of Assumptions~\ref{ass:A1glob} and~\ref{ass:A2glob} is the one introduced by Chaudru de Raynal, Menozzi, Pesce and Zhang in~\cite{Chaudru2022}, where it is argued to be optimal, in terms of regularity, for the well-posedness of~\eqref{eq:Langevin} in the weak sense. In particular, the next statement, which will be used in the proofs of our main results, follows from~\cite[Theorem~1.1]{Chaudru2022}.

\begin{proposition}[Well-posedness of~\eqref{eq:Langevin} under global conditions]\label{prop:chaudru}
    Let Assumptions~\ref{ass:A1glob} and~\ref{ass:A2glob} hold. Then Assumption~\ref{ass:sde} holds. 
    
    Moreover, for any $t>0$, there exists $C(t)$ which only depends on $F$ and $\sigma$ through the quantities $c_0,c_1,c_2,c_3,\alpha$ from Assumptions~\ref{ass:A1glob} and~\ref{ass:A2glob}, as well as $a,b$ from Remark~\ref{rk:linear-growth}, such that for any $x \in \R^{2d}$ and $s \in (0,t]$, the density $p(s;x,\cdot)$ of $X_s$ under $\mathbb{P}_x$ satisfies
    \begin{equation}\label{eq:chaudru-gaussian-bound}
        \forall y \in \R^{2d}, \qquad p(s;x,y) \leq \frac{C(t)}{s^{2d}}.
    \end{equation}
\end{proposition}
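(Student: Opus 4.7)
The plan is to apply Theorem~1.1 of~\cite{Chaudru2022} essentially as a black box, after checking that the assumptions match. First, I would verify that Assumptions~\ref{ass:A1glob} and~\ref{ass:A2glob} imply the hypotheses of that theorem: the uniform ellipticity and anisotropic $(\alpha/3,\alpha)$-Hölder regularity of $\sigma$ are precisely those used in~\cite{Chaudru2022}, while the oscillation condition on $F$, together with the linear-growth consequence recorded in Remark~\ref{rk:linear-growth}, fits into the drift framework of~\cite{Chaudru2022} and ensures non-explosion. The existence and weak uniqueness part of Assumption~\ref{ass:sde} then follow immediately.

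Next, the existence of the density and the bound~\eqref{eq:chaudru-gaussian-bound} are to be extracted from the kinetic Aronson-type upper bound of~\cite{Chaudru2022}, which takes the form
\begin{equation*}
    p(s;x,y) \leq \frac{C(t)}{s^{2d}}\,\exp\bigl(-c\,\rho_s(x,y)^2\bigr), \qquad 0<s\leq t,
\end{equation*}
where $\rho_s$ denotes an anisotropic kinetic pseudo-distance. The scaling $s^{-2d}$ encodes the hypoelliptic structure of~\eqref{eq:Langevin}: the momentum fluctuates at scale $s^{1/2}$ and the position, obtained by time-integration, at scale $s^{3/2}$, giving a joint Jacobian of order $s^{d/2+3d/2}=s^{2d}$. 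Bounding the Gaussian factor by $1$ immediately yields~\eqref{eq:chaudru-gaussian-bound}. For the continuity of $x\mapsto p(t;x,y)$ for almost every $y$, which is the second part of Assumption~\ref{ass:sde}, I would invoke the Hölder regularity of the density in all variables that is also established in~\cite{Chaudru2022}; if it is not available in exactly the required form, it can be recovered from the Gaussian bound together with standard parametrix-type arguments.

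The main issue, and where most of the verification work lies, is the tracking of constants. The statement requires $C(t)$ to depend on $F$ and $\sigma$ only through the finite family $c_0, c_1, c_2, c_3, \alpha$ together with the linear-growth constants $a, b$ from Remark~\ref{rk:linear-growth}. I therefore expect the main task to be a careful inspection of the quantitative estimates in~\cite{Chaudru2022} to confirm that no additional implicit norms of $F$ or $\sigma$ enter the bound. This explicit control of the constants is crucial for the subsequent use of~\eqref{eq:chaudru-gaussian-bound} in the localization arguments, where the process is stopped at the exit of balls in position space so that Assumptions~\ref{ass:A1loc} and~\ref{ass:A2loc} effectively reduce to their global-in-position counterparts.
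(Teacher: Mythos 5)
Your proposal matches the paper's treatment: the paper gives no separate proof of this proposition and simply states that it follows from \cite[Theorem~1.1]{Chaudru2022}, exactly as you do, with the density bound obtained by discarding the Gaussian factor in the kinetic Aronson-type estimate and the constants tracked through $c_0,c_1,c_2,c_3,\alpha,a,b$. Your additional remarks on the $s^{-2d}$ scaling and on the continuity of $x\mapsto p(t;x,y)$ are consistent with that reference, so nothing further is needed.
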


We now let $\mathcal{O} \subset \R^d$ be a nonempty set which satisfies the following assumption.

\begin{assumptionO}\label{ass:O}
    The set $\mathcal{O}$ is open, connected, and its boundary $\partial \mathcal{O}$ has Lebesgue measure $0$.
\end{assumptionO} 

The condition that the Lebesgue measure of $\partial \mathcal{O}$ be $0$ may be understood as a minimal regularity assumption on the boundary of $\mathcal{O}$.

\medskip
Given $\mathcal{O}$, let us define the cylindrical domain $D:=\mathcal{O}\times\R^d$. Under Assumption~\ref{ass:sde}, let $\tau_\partial$ be the first exit time from $D$ of~\eqref{eq:Langevin}, i.e. 
$$\tau_\partial:=\inf\{t>0: X_t\notin D\} \in [0,\infty].$$
The first main result of this work is the following Harnack inequality. 

\begin{theorem}[Harnack inequality]\label{thm:Harnack}
Let Assumptions~\ref{ass:A1loc}, \ref{ass:A2loc}, \ref{ass:sde} and~\ref{ass:O} hold. For any measurable subset $A$ of~$D$, define the function $u_A : [0, \infty) \times D \to [0,1]$ by
\begin{equation}\label{eq:def u_A}
   \forall (t,x) \in [0, \infty) \times D, \qquad u_A(t,x):=\mathbb{P}_x(t < \tau_\partial, X_t\in A). 
\end{equation}

Then, for any compact set $K\subset D$, $\epsilon>0$ and $T>0$, there exists a constant $C_{K,\epsilon,T}>0$ such that for any measurable set $A\subset D$ and for any $t\geq \epsilon$,
\begin{equation}\label{eq:ineq harnack thm} 
    \sup_{x\in K}u_A(t,x)\leq C_{K,\epsilon,T}\inf_{x\in K}u_A(t+T,x).
\end{equation} 
\end{theorem}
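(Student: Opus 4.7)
The plan is to obtain the Harnack inequality from two-sided quantitative control of the sub-Markov density $p^D$ of the killed process. Since Assumptions~\ref{ass:A1loc}--\ref{ass:A2loc} are local in position, I would first localize. Because $K$ is compact in $D=\mathcal{O}\times\mathbb{R}^d$, I would pick a bounded open $\mathcal{O}^\flat$ with $\overline{\mathcal{O}^\flat}\subset\mathcal{O}$ containing the position projection of $K$, extend $F$ and $\sigma$ outside $\mathcal{O}^\flat\times\mathbb{R}^d$ to produce $\tilde F$, $\tilde\sigma$ satisfying the global Assumptions~\ref{ass:A1glob}--\ref{ass:A2glob}, and invoke Proposition~\ref{prop:chaudru} to get $\tilde p(s;x,y)\leq C/s^{2d}$ for $s\in(0,\epsilon+T]$. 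A strong Markov decomposition at the exit time from a fixed bounded neighborhood of $K$ transfers this upper bound to the killed density $p^D$ of the original process along paths that have not yet left the neighborhood.

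The central quantitative estimate I would then establish is a pointwise lower bound
\[
\inf_{x\in K,\ z\in K'} p^D(s_0;x,z) \geq \lambda > 0
\]
for a suitably chosen compact $K'\subset D$ with $K$ in its interior, and some $s_0\in(0,T]$. Given $x=(q_0,p_0)\in K$ and $z=(q_1,p_1)\in K'$, the cubic Hermite polynomial $\varphi$ with $\varphi(0)=q_0$, $\varphi(s_0)=q_1$, $\dot\varphi(0)=p_0$, $\dot\varphi(s_0)=p_1$ provides a smooth kinetic trajectory $(\varphi,\dot\varphi)$ connecting $x$ to $z$ at positive distance from $\partial D$ (possibly after enlarging $K'$); the associated control $\sigma^{-1}(\varphi,\dot\varphi)(\ddot\varphi-F(\varphi,\dot\varphi))$ is uniformly bounded on the compact class of endpoints, so a Girsanov argument on the auxiliary process of the first step combined with the density upper bound allows one to turn a Brownian tube lower bound around the control into a pointwise density lower bound.

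The Harnack inequality then follows by the Markov property. For $x'\in K$ and $t\geq\epsilon$, the density lower bound yields
\[
u_A(t+T,x') \geq \lambda \int_{K'} u_A(t+T-s_0,z)\,\mathrm dz.
\]
It remains to show that the integral on the right controls $\sup_{x\in K}u_A(t,x)$ up to a multiplicative constant. Writing $u_A(t,x)=\int p^D(s';x,y)u_A(t-s',y)\,\mathrm dy$ for a small $s'\in(0,\epsilon]$, using the uniform density upper bound from the first step and a mass-escape estimate derived from it to restrict effectively to a compact set, and then re-applying the density lower bound of the second step to relate $u_A$ at an earlier time at a generic point of $K'$ to $u_A$ at a later time at a point of $K$, closes the estimate. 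I expect the main obstacle to lie in the density lower bound under the low regularity of $F$ and $\sigma$: the classical Stroock--Varadhan support theorem is unavailable, and one must work entirely within the framework of~\cite{Chaudru2022}, using the Gaussian-type upper bound from Proposition~\ref{prop:chaudru} as the substitute regularity input to convert tube estimates (obtained from Girsanov on the auxiliary process with a smooth reference drift) into genuine pointwise density lower bounds.
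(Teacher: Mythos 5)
Your localization step matches the paper, but the way you close the argument has a genuine gap, and it sits exactly where you put the phrase ``It remains to show that the integral on the right controls $\sup_{x\in K}u_A(t,x)$.'' The inequality must be uniform over \emph{all} measurable $A\subset D$, and $D=\mathcal{O}\times\R^d$ is unbounded in momentum (and possibly in position). Take $A$ located very far from $K$. Writing $u_A(t,x)=\int_D p^D(s';x,y)\,u_A(t-s',y)\,\dd y$ and restricting to a compact $K''$ loses precisely the dominant contribution: conditionally on ending in a distant $A$ while surviving, the process is typically already far from $K$ at time $s'$ (the optimal ``bridge'' moves at roughly constant speed), so the tail term $\int_{D\setminus K''}p^D(s';x,y)u_A(t-s',y)\,\dd y$ is not comparable to $\int_{K''}u_A(t-s',y)\,\dd y$, and a mass-escape bound only controls it additively by $\mathbb{P}_x(X_{s'}\notin K'')$, which is small in absolute terms but can exceed $u_A(t,x)$ and $\inf_{x'\in K}u_A(t+T,x')$ by arbitrarily many orders of magnitude. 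Your density lower bound on compacts, applied again, relates a point value at a later time to an average at an earlier time, which is the wrong direction to absorb this error. There is also a secondary issue: under Assumption~\ref{ass:sde} the density is only continuous in the backward variable $x$, and the paper's Proposition~\ref{prop:chaudru} bound is for the localized process, so converting Girsanov tube estimates into \emph{pointwise} lower bounds on $p^D(s_0;x,\cdot)$ requires extra care (at best one gets lower bounds on averages over small balls, hence a.e.\ bounds via Lebesgue differentiation); that part is repairable, but the chaining step above is not, within your scheme.

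The paper avoids any control in the forward variable $y$ altogether: for fixed $A$ (first for continuous $f$, then approximating $\ind{x\in A}$), the function $u_f$ is shown to be a nonnegative distributional solution of $\partial_t u=\mathcal{L}_{F^\flat,\sigma^\flat}u$ in the backward variables on the localized cylinder $(0,\infty)\times D^\flat$, and the interior Harnack inequality of Golse--Imbert--Mouhot--Vasseur, extended to compact sets by a chaining argument (Proposition~\ref{prop:Harnack-smooth}, following~\cite{LelRamRey}), yields~\eqref{eq:ineq harnack thm} with a constant depending only on the operator and on $K,\epsilon,T$ --- hence uniform in $A$, no matter where $A$ sits, because all the far-field information is carried by the solution itself and Harnack only compares its interior values. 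The low regularity is then handled by mollifying $F^\flat,\sigma^\flat$ (the Harnack constant is uniform in the mollification parameter), proving $u^n_f\to u_f$ via tightness and the martingale problem --- this is where the bound~\eqref{eq:chaudru-gaussian-bound} of Proposition~\ref{prop:chaudru} is actually used, together with an instant-exit lemma at the grazing set $\Gamma^{\flat,0}$ --- and finally passing from continuous $f$ to indicators. If you want to keep a probabilistic flavour, you would need two-sided comparability of the killed density in $y$ over all of $D$, which is essentially equivalent to the theorem itself; the PDE route is what makes the uniformity in $A$ come for free.
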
 

We will also make use of the following corollary of Theorem~\ref{thm:Harnack}.

\begin{corollary}[Case of subsets with nonempty interior]\label{cor:Harnack}
    In the setting of Theorem~\ref{thm:Harnack}, if the set $A$ has nonempty interior, then $u_A(t,x)>0$ for all $t>0$, $x\in D$. As a consequence, for any compact set $K \subset D$, for all $t>0$, $\inf_{x \in K} u_A(t,x) > 0$.
\end{corollary}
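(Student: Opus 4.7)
The plan is to bootstrap positivity of $u_A$ from a single point in $D$ at a single positive time, using the Harnack inequality of Theorem~\ref{thm:Harnack}.

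\emph{Step 1 (local positivity).} Since $A$ has nonempty interior and $A \subset D$, I would pick $y_0$ in the interior of $A$ and $r > 0$ with $B(y_0, r) \subset A$. Weak solutions of~\eqref{eq:Langevin} have continuous paths by definition, so under $\mathbb{P}_{y_0}$ the event $\{\sup_{s \in [0,t_0]} |X_s - y_0| < r\}$ has probability tending to $1$ as $t_0 \downarrow 0$, by bounded convergence applied to $\mathbf{1}_{\{\sup_{s \in [0,t_0]} |X_s - y_0| \geq r\}} \to 0$ a.s. On this event, $X_s \in B(y_0, r) \subset D$ for all $s \in [0, t_0]$ (hence $t_0 < \tau_\partial$) and $X_{t_0} \in A$, so $u_A(t_0, y_0) > 0$ for all sufficiently small $t_0 > 0$.

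\emph{Step 2 (propagation).} For an arbitrary target $(t,x) \in (0,\infty) \times D$, I would pick $t_0 \in (0,t)$ small enough that $u_A(t_0, y_0) > 0$. Assumption~\ref{ass:O} ensures that $\mathcal{O}$, hence $D = \mathcal{O} \times \R^d$, is open and connected, and therefore path-connected; any continuous curve in $D$ joining $y_0$ to $x$ has compact image $K \subset D$ containing both points. Applying Theorem~\ref{thm:Harnack} on $K$ with $\epsilon = t_0$ and $T = t - t_0$ then chains
\begin{equation*}
    0 < u_A(t_0, y_0) \leq \sup_{x' \in K} u_A(t_0, x') \leq C_{K, t_0, T} \inf_{x' \in K} u_A(t, x') \leq C_{K, t_0, T}\, u_A(t, x),
\end{equation*}
proving the first assertion. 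For the second assertion, fix a compact $K \subset D$ and $t > 0$, pick any $x_1 \in K$, and note that the first assertion yields $u_A(t/2, x_1) > 0$. Applying Theorem~\ref{thm:Harnack} on $K$ with $\epsilon = T = t/2$ then gives
\begin{equation*}
    0 < u_A(t/2, x_1) \leq \sup_{x' \in K} u_A(t/2, x') \leq C_{K, t/2, t/2} \inf_{x' \in K} u_A(t, x'),
\end{equation*}
as required.

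The only delicate point is Step 1: one must produce some $(t_0, y_0)$ with $u_A(t_0, y_0) > 0$ without any lower bound on the transition density (indeed, Assumption~\ref{ass:sde} provides none). What makes it work is that one only needs strict positivity, which reduces to path continuity of weak solutions together with openness of $A$. The remainder is a mechanical consequence of the Harnack inequality and of the connectedness of $\mathcal{O}$, which is needed to enclose any two points of $D$ in a common compact subset.
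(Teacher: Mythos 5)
Your proof is correct and rests on the same two ingredients as the paper's own argument --- small-time behaviour at a point of $\mathring{A}$ obtained from path continuity, and the Harnack inequality of Theorem~\ref{thm:Harnack} to propagate in time and space --- only organised directly (positivity at $(t_0,y_0)$ pushed forward to any $(t,x)$) where the paper argues by contradiction (a zero at $(t_0,x_0)$ pushed back by Harnack to all earlier times and all of $D$, contradicting $u_{\mathring{A}}(t,x_A)\to 1$ as $t\to 0$). One cosmetic remark: path-connectedness is not needed when you invoke Theorem~\ref{thm:Harnack}, since it holds for an arbitrary compact $K\subset D$, e.g.\ $K=\{y_0,x\}$, so that step is superfluous but harmless.
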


The proofs of Theorem~\ref{thm:Harnack} and Corollary~\ref{cor:Harnack} are detailed in Section~\ref{sec:proof Harnack}. They essentially follow the lines of the proof of~\cite[Theorem~2.15]{LelRamRey}, which is stated for the Langevin process~\eqref{eq:Langevin_intro}, with smooth coefficients, and in a domain $\mathcal{O}$ which is bounded. Therefore, the extension of the proof requires to take care of the more general form of the kinetic SDE~\eqref{eq:Langevin}, the possible low regularity of the coefficients $F$ and $\sigma$, and the possible unboundedness of the domain $\mathcal{O}$.

\subsection{Lyapunov condition and QSD}\label{sss:main-qsd}

Under the assumptions of Theorem~\ref{thm:Harnack}, we now address the existence and uniqueness of a QSD for the process $(X_t)_{t \geq 0}$ in the set $D$. Our approach is based on the general criteria obtained in~\cite{CV}. They essentially require to combine the Harnack inequality from Theorem~\ref{thm:Harnack} with the following Lyapunov condition for the infinitesimal generator of $(X_t)_{t \geq 0}$, defined by 
\begin{equation}\label{generateur Langevin}
    \mathcal{L}_{F,\sigma} = p\cdot\nabla_q +F\cdot\nabla_p +\frac{1}{2}\sigma\sigma^T:\nabla^2_p,
\end{equation}
where $:$ is the Frobenius product applied to matrices.

\begin{assumptionH}[Lyapunov function]\label{ass:lyapunov}
For all $\lambda>0$, there exist a constant $c_\lambda>0$, a function $\phi_\lambda:D\to [1,\infty)$ which is $\mathcal{C}^1$ in $q$ and $\mathcal{C}^2$ in $p$, and a bounded subset $D_\lambda\subset D$ closed in $D$ such that 
$$\sup_{x\in D_\lambda}\phi_\lambda(x)<\infty,$$
and
$$\forall x \in D, \qquad \mathcal{L}_{F,\sigma}\phi_\lambda(x)\leq -\lambda\phi_\lambda(x)+c_\lambda\mathbb{1}_{D_\lambda}(x).$$ 
\end{assumptionH} 

Using the Harnack inequality from Theorem~\ref{thm:Harnack} we are able to prove that under Assumption~\ref{ass:lyapunov}, the criteria from~\cite[Theorem 3.5]{CV} are satisfied for the process~\eqref{eq:Langevin}. These criteria ensure the existence of a unique QSD in the family
\begin{equation*}
\mathcal{M}_{\lambda_0} = \{\text{$\mu$ probability measure on $D$}: \exists \lambda > \lambda_0, \exists p \in (1, \lambda/\lambda_0), \mu(\phi_\lambda^{1/p}) < \infty\}
\end{equation*}
for some $\lambda_0 \geq 0$, as well as the long-time convergence of the law of the process conditioned to remain in $D$ towards the QSD (for initial conditions in $\mathcal{M}_{\lambda_0}$), as stated in the next theorem, which is our second main result.

\begin{theorem}[QSD]\label{thm:critere qsd}
Let Assumptions~\ref{ass:A1loc}, \ref{ass:A2loc}, \ref{ass:sde}, \ref{ass:O} and~\ref{ass:lyapunov} hold. There exists $\lambda_0 \geq 0$ such that $(X_t)_{t \geq 0}$ has a unique QSD $\mu$ in $D$ in the family $\mathcal{M}_{\lambda_0}$. This measure actually satisfies $\mu(\phi_{\lambda}^{1/p})<\infty$ for all $\lambda > \lambda_0$ and $p \in (1, \lambda/\lambda_0)$.

Moreover, for any $\lambda > \lambda_0$ and $p \in (1, \lambda/\lambda_0)$, there exist  $C,\beta>0$, such that, for all $x\in D$ and all measurable $f:D\to\mathbb R$ with $|f| \leq \phi_\lambda^{1/p}$,
 \begin{equation}
 \label{thm-specgap}
 \left|\mathrm e^{\lambda_0 t}\mathbb E_x\left[\ind{t < \tau_\partial} f(X_t)\right]-\varphi(x)\mu(f)\right|\leq C\mathrm e^{-\beta t}\phi_\lambda^{1/p}(x),
 \end{equation}
where $\varphi:D\to\mathbb{R}^*_+$ is a positive function which does not depend on~$\lambda,p$.
\end{theorem}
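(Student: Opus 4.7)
The plan is to verify the hypotheses of the abstract criterion \cite[Theorem 3.5]{CV}, which, given a Lyapunov function together with a Harnack-type comparison on its sub-level sets, produces at once the existence of a QSD in the class $\mathcal{M}_{\lambda_0}$, its uniqueness there, and the exponential convergence~\eqref{thm-specgap}. The three ingredients to feed into that criterion are: (i) a semigroup-level Lyapunov inequality for the scale $\phi_\lambda^{1/p}$; (ii) a uniform Harnack comparison of $u_A(t,\cdot)$ over a suitable compact set; and (iii) a non-degenerate lower bound on the survival probability from that compact set.

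For (i), I would convert the generator bound in Assumption~(H) into a semigroup bound. Since $\phi_\lambda$ is $\mathcal{C}^1$ in $q$ and $\mathcal{C}^2$ in $p$—exactly the regularity required by the kinetic generator $\mathcal{L}_{F,\sigma}$ (only first-order in $q$, since $\mathrm{d}q_t=p_t\,\mathrm{d}t$ has finite variation)—It\^o's formula applied to $\phi_\lambda(X_{t\wedge\tau_\partial\wedge\tau_n})$ for a localizing sequence $\tau_n$, followed by Gr\"onwall, yields
\begin{equation*}
    \mathbb{E}_x\!\left[\phi_\lambda(X_{t\wedge\tau_\partial})\right] \leq e^{-\lambda t}\phi_\lambda(x) + \frac{c_\lambda}{\lambda}\bigl(1-e^{-\lambda t}\bigr).
\end{equation*}
Restricting to $\{t<\tau_\partial\}$ and invoking Jensen's inequality for the concave map $u\mapsto u^{1/p}$ then produces a discrete-time Lyapunov inequality of the form $\mathbb{E}_x[\mathbf{1}_{t<\tau_\partial}\phi_\lambda^{1/p}(X_t)] \leq \alpha_t \phi_\lambda^{1/p}(x) + C_t \mathbf{1}_{L}(x)$ for a bounded set $L\supset D_\lambda$ (using $\sup_{D_\lambda}\phi_\lambda<\infty$), with $\alpha_t\to 0$ as $t\to\infty$ provided $\lambda/p$ is chosen large. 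For (ii) and (iii), the set $D_\lambda$ is bounded and closed in $D$, hence a compact subset of $D$; Theorem~\ref{thm:Harnack} with $K=D_\lambda$ delivers the required comparison between $\sup_{D_\lambda}u_A(t,\cdot)$ and $\inf_{D_\lambda}u_A(t+T,\cdot)$, while Corollary~\ref{cor:Harnack} applied to any nonempty open subset of $D$ yields $\inf_{x\in D_\lambda}\mathbb{P}_x(t<\tau_\partial)>0$ for every $t>0$.

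The main obstacle is to match these ingredients faithfully with the precise form of the hypotheses of \cite[Theorem 3.5]{CV}: one must identify the threshold $\lambda_0$ with the exponential decay rate of the survival probability (independent of the starting point in the Lyapunov class), ensure that the Lyapunov contraction rate $\lambda/p$ strictly exceeds $\lambda_0$ (which is the origin of the condition $p\in(1,\lambda/\lambda_0)$ in the definition of $\mathcal{M}_{\lambda_0}$), and verify that the positive function $\varphi$ produced by the abstract theorem is intrinsic, i.e.\ independent of the particular $\lambda$ and $p$ used to describe the Lyapunov class (which should follow from its characterization as the principal right-eigenfunction of the sub-Markovian semigroup on $D$). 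Once these matchings are carried out, existence, uniqueness, the integrability $\mu(\phi_\lambda^{1/p})<\infty$ for all admissible $\lambda,p$, and the quantitative convergence~\eqref{thm-specgap} with some rate $\beta>0$ all follow directly from \cite[Theorem 3.5]{CV}.
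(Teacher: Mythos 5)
Your overall route is the paper's route (verify the hypotheses of \cite[Theorem 3.5]{CV} by combining an It\^o/Gr\"onwall consequence of Assumption~\ref{ass:lyapunov} with Theorem~\ref{thm:Harnack} and Corollary~\ref{cor:Harnack}), and your ingredient (i) up to the Jensen step matches the paper's inequality $\mathbb{E}_x[\ind{t<\tau_\partial}\phi_\lambda(X_t)]\leq \mathrm{e}^{c_\lambda t}\phi_\lambda(x)$ and its variant with killing outside $D_\lambda$. But there is a genuine gap in your choice of the ``small'' compact set. Assumption~\ref{ass:lyapunov} only gives that $D_\lambda$ is bounded and closed \emph{in} $D$: its closure in $\R^{2d}$ may touch $\partial D$, so $D_\lambda$ need not be a compact subset of $D$ (Theorem~\ref{thm:Harnack} cannot be invoked with $K=D_\lambda$), and nothing forces $D_\lambda$ to have nonempty interior, so the minorization ($F_1$) and the mass lower bound ($F_2$)-(iii), namely $\mathrm{e}^{\alpha_2 t}\mathbb{P}_x(t<\tau_\partial,\,X_t\in K)\to\infty$, can be vacuous or false for $K=D_\lambda$. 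There is also a circularity in the rates: the return rate $\alpha_2$ achievable for a given small set must be strictly beaten by the Lyapunov rate $\lambda/p$, but if the small set is $D_\lambda$ it moves with $\lambda$, and choosing ``$\lambda/p$ large'' does not resolve this. The paper decouples these objects: it works with the exhausting compacts $K_k$ of~\eqref{eq:K_k} (which have nonempty interior), extracts a rate $\alpha_{2,\mathrm{min}}$ from a fixed $K_{k_1}$ by a chaining argument based on Corollary~\ref{cor:Harnack}, \emph{then} chooses $\lambda>\alpha_2$ and $p\in(1,\lambda/\alpha_2)$ (possible precisely because Assumption~\ref{ass:lyapunov} holds for every $\lambda$), and finally enlarges the small set to $K_{k_0}$ using $\sup_{x\in D_\lambda}\mathbb{P}_x(\tau_\partial\wedge\tau_{K_k}>1)\to0$ and the argument of \cite[Sect.~12.3]{CV}. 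Related to this, your drift inequality $\mathbb{E}_x[\ind{t<\tau_\partial}\psi_1(X_t)]\leq\alpha_t\psi_1(x)+C_t\mathbb{1}_L(x)$ is not the hypothesis ($F_2$)-(i) of \cite[Theorem 3.5]{CV}, which involves killing at the hitting time $\tau_K$ of the small set for \emph{all} $x\in D$; converting the generator condition into that form is where a real part of the work lies.

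A second gap concerns the final claims of the theorem, which you defer to ``matching''. Applying the abstract theorem with one admissible pair $(\lambda,p)$ tied to $\alpha_2$ only yields $\lambda_0\leq\alpha_2$, uniqueness among measures integrating that particular $\phi_\lambda^{1/p}$ and charging $K$, and convergence for that pair. To obtain $\mu(\phi_\lambda^{1/p})<\infty$ for \emph{all} $\lambda>\lambda_0$ and $p\in(1,\lambda/\lambda_0)$, the paper bootstraps: it takes $f=\ind{x\in K_k}$ in the convergence estimate to upgrade ($F_2$)-(iii) to any rate $\alpha_2'>\lambda_0$ and re-runs the verification; the independence of $\mu$ and $\varphi$ from $(\lambda,p,k)$ is read off from the identification of $\varphi(x)\mu(f)$ as the limit of $\mathrm{e}^{\lambda_0 t}\mathbb{E}_x[\ind{t<\tau_\partial}f(X_t)]$; and uniqueness in the whole family $\mathcal{M}_{\lambda_0}$ requires showing, via Theorem~\ref{thm:Harnack}, that any QSD in $\mathcal{M}_{\lambda_0}$ gives positive mass to $K$ so that the abstract uniqueness statement applies. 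None of these steps is automatic, and your proposal as written does not supply them.
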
  

Theorem~\ref{thm:critere qsd} is proved in Section~\ref{sec:critere qsd}.

\begin{remark}
     We emphasize that~\eqref{thm-specgap} implies that for any $\lambda>\lambda_0$ and $p \in (1, \lambda/\lambda_0)$, there exists $C'>0$ such that, for any probability measure $\nu$ on $D$,
    \begin{equation}
    \label{eq:thm-QSD}
    \forall t\geq0,\qquad\Vert\mathbb{P}_\nu(X_t\in\cdot|\tau_{\partial}>t)-\mu(\cdot)\Vert_{TV}\leq C'\mathrm{e}^{-\beta t}\frac{\nu(\phi_\lambda^{1/p})}{\nu(\varphi)}.
    \end{equation}
    This follows for example from the arguments of~\cite[Thm.\ 2.2]{CV-ECP-2020}.
\end{remark}

\begin{remark}
When $\mathcal O=\mathbb R^d$, $\tau_\partial = \infty$ almost surely. In this situation, $\lambda_0=0$ and Theorem~\ref{thm:critere qsd} provides the existence of a stationary distribution, which is unique in $\mathcal{M}_0$. Similar results in the conservative case were obtained in~\cite{SachsLeimkuhlerEtAl2017}.
\end{remark} 

\begin{remark}
    Note that it is not necessary to assume that~\ref{ass:lyapunov} holds true for any $\lambda>0$. It is only needed for some $\lambda>\lambda_0$. We kept the simpler version~\ref{ass:lyapunov} because it is hard in practice to compute bounds for $\lambda_0$ and it simplifies the analysis.
\end{remark}

\subsection{Examples}\label{sss:main-examples}

We now provide examples of situations in which the assumptions of Theorem~\ref{thm:critere qsd} are satisfied. The first one is the case of bounded-in-position domains.

\begin{theorem}[QSD in a bounded domain]\label{thm:ex-bounded}
    Let $F$ and $\sigma$ satisfy Assumptions~\ref{ass:A1loc}, \ref{ass:A2loc} and~\ref{ass:sde}, and let $\mathcal{O}$ be a \emph{bounded} domain satisfying Assumption~\ref{ass:O}. Then Assumption~\ref{ass:lyapunov} is satisfied with a \emph{bounded} Lyapunov function $\phi_\lambda$, so that $(X_t)_{t \geq 0}$ has a unique QSD $\mu$ on $D$, and~\eqref{eq:thm-QSD} holds.
\end{theorem}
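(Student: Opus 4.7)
The approach is to verify Assumption~\ref{ass:lyapunov} with a bounded Lyapunov function, and then to invoke Theorem~\ref{thm:critere qsd}. Set $R := \sup_{q \in \overline{\mathcal{O}}} |q| < \infty$. Observe first that the local assumptions on $F$ and $\sigma$ become global on the cylinder $\overline{\mathcal{O}} \times \R^d$: iterating Assumption~\ref{ass:A1loc} along chains of unit increments, exactly as in Remark~\ref{rk:linear-growth}, yields constants $A, B \geq 0$ such that $|F(q,p)| \leq A + B|p|$ for every $(q,p) \in \overline{\mathcal{O}} \times \R^d$, while Assumption~\ref{ass:A2loc} directly provides $c_1 I \leq \sigma\sigma^T(q,p) \leq c_2 I$ on the same set. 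The only unbounded direction is therefore the momentum $p$; but since no dissipativity is assumed on $F$, a Lyapunov function depending only on $p$ cannot produce the required negative drift. The main obstacle is thus to exploit the transport term $p \cdot \nabla_q$ of the generator $\mathcal{L}_{F,\sigma}$ by coupling $q$ and $p$ in the definition of $\phi_\lambda$.

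Concretely, I propose to use, for every $\lambda > 0$, the single smooth bounded function
\begin{equation*}
\phi_\lambda(q,p) := (1+R) - q \cdot \psi(p), \qquad \psi(p) := \frac{p}{\sqrt{1+|p|^2}}.
\end{equation*}
Since $|\psi(p)| \leq 1$ and $|q| \leq R$ for $q \in \mathcal{O}$, this function takes values in $[1, 1+2R]$, and it satisfies the regularity requirement of Assumption~\ref{ass:lyapunov}. A direct computation gives
\begin{equation*}
\mathcal{L}_{F,\sigma}\phi_\lambda(q,p) = -\frac{|p|^2}{\sqrt{1+|p|^2}} - F(q,p) \cdot D\psi(p)^T q + \tfrac{1}{2} \sigma\sigma^T(q,p) : \nabla_p^2 \phi_\lambda(q,p).
\end{equation*}
The first term, coming from $p \cdot \nabla_q \phi_\lambda = -p \cdot \psi(p)$, grows like $-|p|$ at infinity. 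Using the elementary bounds $\Vert D\psi(p)\Vert = (1+|p|^2)^{-1/2}$ and $\Vert \nabla_p^2 \phi_\lambda(q,p)\Vert \leq C(d) R \, (1+|p|^2)^{-1}$, together with $|F| \leq A + B|p|$ and $\Vert\sigma\sigma^T\Vert \leq c_2$ on $\overline{\mathcal{O}}\times\R^d$, the second and third terms are bounded uniformly on $D$ by a constant depending only on $d, R, A, B, c_2$. Hence $\mathcal{L}_{F,\sigma}\phi_\lambda(q,p) \to -\infty$ as $|p| \to \infty$, uniformly in $q \in \mathcal{O}$.

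Given $\lambda > 0$, choose $P_\lambda$ large enough that $\mathcal{L}_{F,\sigma}\phi_\lambda(q,p) \leq -\lambda(1+2R) \leq -\lambda \phi_\lambda(q,p)$ for every $q \in \mathcal{O}$ and $|p| \geq P_\lambda$, and set $D_\lambda := \mathcal{O} \times \overline{B}(0, P_\lambda)$. Then $D_\lambda$ is bounded and closed in $D$, $\phi_\lambda$ is bounded on $D_\lambda$, and on $D_\lambda$ the quantity $|\mathcal{L}_{F,\sigma}\phi_\lambda|$ is dominated by some finite $C_\lambda$; taking $c_\lambda := C_\lambda + \lambda(1+2R)$ yields $\mathcal{L}_{F,\sigma}\phi_\lambda \leq -\lambda \phi_\lambda + c_\lambda \mathbb{1}_{D_\lambda}$ on all of $D$. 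Assumption~\ref{ass:lyapunov} is therefore satisfied, and Theorem~\ref{thm:critere qsd} delivers the unique QSD $\mu$ together with the convergence~\eqref{eq:thm-QSD}.
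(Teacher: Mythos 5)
Your proposal is correct and follows essentially the same route as the paper: the paper's Lyapunov function is $\phi(q,p)=\beta-\frac{q\cdot p}{|p|}g(|p|)$ with $\beta=1+\sup_{q\in\mathcal O}|q|$ and a cutoff $g$ smoothing the singularity at $p=0$, which is the same construction as your $(1+R)-q\cdot p/\sqrt{1+|p|^2}$ up to the choice of smoothing, with the same mechanism (the transport term yields $-|p|$ while the drift and diffusion contributions are lower order because $\mathcal O$ is bounded and $F$ has linear growth in $p$ there), followed by the same choice of $D_\lambda=\{|p|\le P_\lambda\}$ and the application of Theorem~\ref{thm:critere qsd}.
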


This statement generalises the result of~\cite[Theorems~2.14 and~2.22]{lelievre2022quasi}, where the existence and uniqueness of the QSD in a (smooth) bounded-in-position domain is proved for the Langevin process~\eqref{eq:Langevin_intro}, with smooth coefficients. The method of proof is however drastically different, as the argument of~\cite{lelievre2022quasi} is based on the compactness of the (absorbed) semigroup of $(X_t)_{t \geq 0}$, which follows from a quantitative Gaussian upper bound derived on the transition density of this process. The main technical contribution of the proof of Theorem~\ref{thm:ex-bounded} is thus the derivation of a simple bounded Lyapunov function. It is detailed in Section~\ref{ss:ex-bounded}.

\medskip
When the domain $\mathcal{O}$ is no longer assumed to be bounded, the construction of a Lyapunov function becomes more dependent on the particular expression of $F$ and $\sigma$. In~\cite{guillin2020quasi}, Guillin, Nectoux and Wu address the case of a force field $F$ which takes the form
\begin{equation*}
    F(q,p) = -\nabla V(q) - \gamma(q,p) p,
\end{equation*}
for a potential function $V : \R^d \to \R$, a nonconstant friction matrix $\gamma(q,p) \in \R^{d \times d}$, and a diffusion matrix $\sigma$ which writes
\begin{equation*}
    \sigma(q,p) = \Sigma(q,p) I_d,
\end{equation*}
for a non-degenerate and bounded scalar-valued diffusion coefficient $\Sigma(q,p)$. Under suitable regularity and growth conditions on $V$, $\gamma$ and $\Sigma$, they construct a Lyapunov function $\mathsf{W}_1(q,p)$, which is essentially a perturbation of $\exp(a H(q,p))$, for some $a>0$ and where $H(q,p)=V(q)+\frac{1}{2}|p|^2$ is the Hamiltonian of the system. This construction provides, for any $p > 1$, the existence and uniqueness of a QSD in the class of probability measures $\nu$ such that $\nu(\mathsf{W}_1^{1/p}) < \infty$. It can be checked that in this setting, the function $\mathsf{W}_1$ indeed satisfies our Assumption~\ref{ass:lyapunov}, so Theorem~\ref{thm:critere qsd} also applies to recover similar results.

\medskip
In Section~\ref{ss:ex-langevin}, we consider a different setting, which is studied in~\cite{LRS}. We come back to the case of the Langevin process~\eqref{eq:Langevin_intro}, with mass matrix $M=I_d$ for simplicity, and an interaction force which decomposes as
\begin{equation}\label{eq:ex-langevin:1}
    F(q) = -\nabla U(q) - \ell(q),
\end{equation}
where $U\in\mathcal{C}^1(\R^d)$, $U\geq0$, and there exist constants $\alpha>0$ and $\beta\in(0,\gamma)$ such that for all $q\in\mathbb{R}^d$,
\begin{equation}\label{eq:ex-langevin:2}
    \left(\nabla U(q)+\ell(q)\right) \cdot q \geq\alpha\left(|q|^{2}+U(q)\right)+\frac{|\ell(q)|^{2}}{\beta^{2}}.
\end{equation}
We only assume that the vector field $\ell$ is measurable, and notice that then Assumption~\ref{ass:A1loc} holds as soon as $\ell$ is bounded on bounded sets of $\R^d$. On the other hand, since we are dealing with the Langevin process~\eqref{eq:Langevin_intro}, for which the diffusion matrix $\sigma = \sqrt{2\gamma \mathrm{k} T} I_d$ is constant, Assumption~\ref{ass:A2loc} obviously holds true. In this setting, we obtain the following result.

\begin{theorem}[QSD for the Langevin process]\label{thm:ex-langevin}
    Consider the Langevin process~\eqref{eq:Langevin_intro}, with a mass matrix $M=I_d$ and an interaction force which satisfies~\eqref{eq:ex-langevin:1}--\eqref{eq:ex-langevin:2} for a measurable vector field $\ell : \R^d \to \R^d$ which is bounded on bounded sets of $\R^d$. If Assumption~\ref{ass:sde} holds true, then for any set $\mathcal{O} \subset \R^d$ which satisfies Assumption~\ref{ass:O}, the process $(X_t)_{t \geq 0}$ possesses a QSD $\mu$ in $D = \mathcal{O} \times \R^d$, and~\eqref{eq:thm-QSD} holds with the function $\phi_\lambda$ defined in Section~\ref{ss:ex-langevin}. Moreover, there exists $n \geq 0$ such that $\mu$ is the unique QSD in the class of probability measures $\nu$ such that $\nu((U(q)+|q|^2+|p|^2)^n) < \infty$.
\end{theorem}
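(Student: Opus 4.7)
The plan is to verify the hypotheses of Theorem~\ref{thm:critere qsd} by producing an explicit family of Lyapunov functions. Assumption~\ref{ass:A1loc} holds because the drift of the kinetic SDE~\eqref{eq:Langevin} associated to~\eqref{eq:Langevin_intro} is $F_{\mathrm{kin}}(q,p)=-\nabla U(q)-\ell(q)-\gamma p$, which is locally uniformly continuous thanks to the $\mathcal{C}^1$ regularity of $U$ and the local boundedness of $\ell$. Assumption~\ref{ass:A2loc} is trivial since $\sigma=\sqrt{2\gamma\mathrm{k}T}\,I_d$ is constant, and Assumptions~\ref{ass:sde} and~\ref{ass:O} are given. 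The substantive work is therefore to build a family $(\phi_\lambda)_{\lambda>0}$ satisfying Assumption~\ref{ass:lyapunov} and to identify its growth in order to specify the uniqueness class.

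My starting point is the ``base'' candidate
$$
W(q,p) := U(q) + \tfrac{1}{2}|p|^2 + \eta\,p\cdot q + \tfrac{\eta\gamma}{2}|q|^2 + c_0,
$$
where $\eta\in(0,\gamma)$ is a parameter to be chosen and $c_0\geq 1$ ensures $W\geq 1$. Then the quadratic form in $(q,p)$ is positive definite and $W$ is comparable to $1+U(q)+|q|^2+|p|^2$. A direct computation with~\eqref{generateur Langevin}, in which the coefficient $\eta\gamma/2$ is tuned to cancel all pure $p\cdot q$ contributions, yields
$$
\mathcal{L}_{F,\sigma}W(q,p) = (\eta-\gamma)|p|^2 - \eta\bigl(\nabla U(q)+\ell(q)\bigr)\cdot q - \ell(q)\cdot p + \gamma\mathrm{k}T\,d.
$$
Inserting~\eqref{eq:ex-langevin:2} and studying the residual quadratic form in the variables $p$ and $\ell(q)$, a Young inequality with the balanced constant $\varepsilon=\beta^2/(4\eta)$ shows that $-(\gamma-\eta)|p|^2-(\eta/\beta^2)|\ell|^2-\ell\cdot p\leq -\kappa|p|^2$ for some $\kappa>0$ as soon as $4\eta(\gamma-\eta)>\beta^2$. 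Such $\eta$ exists precisely because $\beta<\gamma$, and with this choice one obtains $\mathcal{L}_{F,\sigma}W \leq -\kappa' W+C$ on $D$ for positive constants $\kappa',C$.

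To promote this into Assumption~\ref{ass:lyapunov} for every $\lambda>0$, I take $\phi_\lambda := W^{n_\lambda}$ for an integer $n_\lambda$ to be determined. The chain rule gives
$$
\mathcal{L}_{F,\sigma}(W^n) = nW^{n-1}\mathcal{L}_{F,\sigma}W + n(n-1)\gamma\mathrm{k}T\,W^{n-2}|\nabla_pW|^2,
$$
and $|\nabla_pW|^2=|p+\eta q|^2\leq C_1 W$ by coercivity of $W$. On the complement of a sufficiently large sublevel set $D_\lambda:=\{W\leq R_\lambda\}\cap D$ (which is bounded and closed in $D$), the subleading correction is absorbed into $-n\kappa'W^n/2$, and choosing $n_\lambda$ of order $\lambda/\kappa'$ produces $\mathcal{L}_{F,\sigma}\phi_\lambda\leq-\lambda\phi_\lambda$ outside $D_\lambda$ with a bounded remainder on $D_\lambda$, establishing Assumption~\ref{ass:lyapunov}.

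Theorem~\ref{thm:critere qsd} then supplies the QSD $\mu$ and the convergence estimate~\eqref{eq:thm-QSD}. Since $W$ is comparable to $1+U+|q|^2+|p|^2$, for any fixed $\lambda>\lambda_0$ and $p\in(1,\lambda/\lambda_0)$ the class $\mathcal{M}_{\lambda_0}$ contains $\{\nu:\nu((U+|q|^2+|p|^2)^n)<\infty\}$ with $n:=n_\lambda/p$, which yields the announced uniqueness class. The main obstacle in this strategy is the sign analysis of the quadratic form in $(p,\ell)$: the cross term $-\ell\cdot p$ is the delicate piece of $\mathcal{L}_{F,\sigma}W$ and can only be controlled using both the $|\ell|^2/\beta^2$ term provided by~\eqref{eq:ex-langevin:2} and the strict inequality $\beta<\gamma$, which leaves the parameter $\eta$ some room to live.
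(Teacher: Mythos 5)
Your proposal is correct and follows essentially the same route as the paper: both verify Assumption~\ref{ass:lyapunov} with $\phi_\lambda$ a power of a quadratically modified Hamiltonian (a function of the form $1+U(q)+\tfrac12|p|^2+(\text{cross term }q\cdot p)+(\text{quadratic term in }q)$), use~\eqref{eq:ex-langevin:2} together with $\beta<\gamma$ to get the linear drift inequality, take the power $n_\lambda$ to reach an arbitrary rate $\lambda$, and then invoke Theorem~\ref{thm:critere qsd}. The only difference is cosmetic: the paper imports the base computation and the admissible parameter range from~\cite[Lemmas~4.2--4.3]{LRS}, whereas you redo it directly with coefficients tuned to cancel the $q\cdot p$ cross terms, leading to the equivalent condition $4\eta(\gamma-\eta)>\beta^2$.
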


The Lyapunov function employed to show that Assumption~\ref{ass:lyapunov} is satisfied is derived from results in~\cite{LRS}. It is a polynomial of a modification of the Hamiltonian $U(q)+\frac{1}{2}|p|^2$ of the system. As a consequence, it yields uniqueness of a QSD in a class of probability measures which requires less integrability than classes obtained with Lyapunov functions which are exponential of (modifications of) the Hamiltonian.

\section{Proof of Theorem~\ref{thm:Harnack} and Corollary~\ref{cor:Harnack}}\label{sec:proof Harnack}

As is indicated at the end of Section~\ref{sss:main-harnack}, the proof of Theorem~\ref{thm:Harnack} is based on~\cite[Theorem~2.15]{LelRamRey}, which is stated for the Langevin process~\eqref{eq:Langevin_intro}, with smooth coefficients, and in a domain $\mathcal{O}$ which is bounded. Therefore, the main issues to handle are:
\begin{itemize}
    \item[(i)] the more general form of the kinetic SDE~\eqref{eq:Langevin},
    \item[(ii)] the possible low regularity of the coefficients $F$ and $\sigma$,
    \item[(iii)] the possible unboundedness of the domain $\mathcal{O}$.
\end{itemize}
We address the point~(iii) by a localization procedure which is first detailed in Section~\ref{ss:loc-coeffs}. To handle the point~(ii), we also need to introduce a regularization of $F$ and $\sigma$, which is the object of Sections~\ref{ss:loc-reg-coeffs} and~\ref{ss:cv-un-u}. In these two sections, we resort to proofs which are detailed in~\cite{LelRamRey} for the Langevin process, and only emphasize the parts which need to be modified to take into account the point~(i).

The regularization step actually allows to prove the Harnack inequality for a function of the form $u_f(t,x) = \mathbb{E}_x[\ind{t < \tau_\partial}f(X_t)]$, with a continuous function $f$, rather than $u_A(t,x) = \mathbb{P}_x(t < \tau_\partial, X_t \in A)$. A preliminary result regarding $u_f$ is established in Section~\ref{ss:reg-ind}, and the proof of the Harnack inequality for $u_A$ once it has been established for $u_f$ is detailed in Section~\ref{ss:unreg-ind}, thereby completing the proof of Theorem~\ref{thm:Harnack}. The proof of Corollary~\ref{cor:Harnack} is then presented in Section~\ref{ss:pf-cor-harnack}.

\medskip 
Throughout Section~\ref{sec:proof Harnack}, Assumptions~\ref{ass:A1loc}, \ref{ass:A2loc}, \ref{ass:sde} and~\ref{ass:O} are in force.

\subsection{Localization of $F$ and $\sigma$}\label{ss:loc-coeffs}

Let us fix a compact subset $K$ of $D$. As a consequence of Assumption~\ref{ass:O}, it is easily checked that there exists an open, bounded, connected and $\mathcal{C}^2$ set $\mathcal{O}^\flat \subset \R^d$ such that $K \subset D^\flat := \mathcal{O}^\flat \times \R^d$ on the one hand, and $\overline{\mathcal{O}^\flat} \subset \mathcal{O}$ on the other hand. Under Assumptions~\ref{ass:A1loc} and~\ref{ass:A2loc}, there exist functions $F^\flat : \R^{2d} \to \R^d$ and $\sigma^\flat : \R^{2d} \to \R^{d \times d}$ which coincide with $F$ and $\sigma$ on $\overline{D^\flat}$, and which satisfy Assumptions~\ref{ass:A1glob} and~\ref{ass:A2glob}. We denote by $(X^\flat_t)_{t \geq 0}$ the weak solution to~\eqref{eq:Langevin} with coefficients $F^\flat$, $\sigma^\flat$ provided by Proposition~\ref{prop:chaudru}. 

In this step, we show that, as long as the process $X^\flat_t$ remains in $D^\flat$, it has the same law as $X_t$. To proceed, for any $x \in \R^{2d}$, we denote by $\mathrm{P}_x$ (resp. $\mathrm{P}_x^\flat$) the law, on the space of trajectories $\mathcal{C}([0,\infty),\R^{2d})$, of the process $(X_t)_{t \geq 0}$ (resp. $(X^\flat_t)_{t \geq 0}$) with initial condition $x$, provided by Proposition~\ref{prop:chaudru}. As usual, the set $\mathcal{C}([0,\infty),\R^{2d})$ is endowed with the Borel $\sigma$-algebra induced by the topology of locally uniform convergence. Next, we define the functional $\tau^\flat_\partial$ on $\mathcal{C}([0,\infty),\R^{2d})$ by
\begin{equation*}
    \forall \mathrm{x}=(\mathrm{x}_t)_{t \geq 0} \in \mathcal{C}([0,\infty),\R^{2d}), \qquad \tau^\flat_\partial(\mathrm{x}) := \inf\{t > 0: \mathrm{x}_t \not\in D^\flat\},
\end{equation*}
and denote by $\mathcal{G}_{\tau^\flat_\partial}$ the sub-$\sigma$-algebra of $\mathcal{C}([0,\infty),\R^{2d})$ generated by the application $\mathrm{x} \mapsto (\mathrm{x}_{t \wedge \tau^\flat_\partial(\mathrm{x})})_{t \geq 0}$. 

We may now state and prove the main result of this step.

\begin{lemma}[Localization of $F$ and $\sigma$]\label{lem:loc-X}
    For any $x \in \R^{2d}$, the probability measures $\mathrm{P}_x$ and $\mathrm{P}_x^\flat$ coincide on the $\sigma$-algebra $\mathcal{G}_{\tau^\flat_\partial}$.
\end{lemma}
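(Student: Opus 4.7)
The proof plan rests on the observation that, by construction, the pairs $(F,\sigma)$ and $(F^\flat,\sigma^\flat)$ coincide on $\overline{D^\flat}$. Hence the infinitesimal generators $\mathcal{L}_{F,\sigma}$ and $\mathcal{L}_{F^\flat,\sigma^\flat}$ act identically on test functions $\varphi$ evaluated at points of $\overline{D^\flat}$. The strategy is therefore to show that the two laws $\mathrm{P}_x$ and $\mathrm{P}_x^\flat$, when stopped at $\tau^\flat_\partial$, both solve a common \emph{stopped} martingale problem, and to leverage the weak uniqueness provided by Assumption~\ref{ass:sde} to conclude. Concretely, I would use a concatenation argument: I would glue $X^\flat$ up to $\tau^\flat_\partial$ with a weak solution of the original SDE continued from $X^\flat_{\tau^\flat_\partial}$, and show the glued process has law $\mathrm{P}_x$.

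More precisely, starting from the probability space supporting $(X^\flat_t)_{t \geq 0}$ (enlarged if necessary), I would construct, by a regular conditional probability / measurable selection argument, an auxiliary process $(X'_t)_{t \geq 0}$ whose conditional law given $\mathcal{G}_{\tau^\flat_\partial}$ coincides with $\mathrm{P}_{y}$ evaluated at $y = X^\flat_{\tau^\flat_\partial}$. This is where Assumption~\ref{ass:sde} really enters: the existence of a unique weak solution from every starting point, together with the continuity in the initial condition of transition densities, yields a measurable family $x \mapsto \mathrm{P}_x$, which is precisely what is needed to make such a construction rigorous. I would then define
\begin{equation*}
   \tilde X_t := \begin{cases} X^\flat_t & \text{if } t \leq \tau^\flat_\partial, \\ X'_{t-\tau^\flat_\partial} & \text{if } t > \tau^\flat_\partial, \end{cases}
\end{equation*}
and verify that $\tilde X$ is itself a weak solution of~\eqref{eq:Langevin} starting from $x$: on $[0,\tau^\flat_\partial]$, the trajectory of $X^\flat$ stays in $\overline{D^\flat}$, where the coefficients agree, so $X^\flat$ solves the SDE with coefficients $(F,\sigma)$ there; after $\tau^\flat_\partial$, the glued piece solves it by construction; the driving Brownian motion is assembled from the BM driving $X^\flat$ and the BM driving $X'$, and has the correct independence structure by the strong Markov-type property built into the construction.

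By the uniqueness of weak solutions in Assumption~\ref{ass:sde}, the law of $\tilde X$ equals $\mathrm{P}_x$. On the other hand, by construction $\tilde X$ agrees with $X^\flat$ on $[0, \tau^\flat_\partial]$, so their laws coincide on $\mathcal{G}_{\tau^\flat_\partial}$, and the law of $X^\flat$ on this $\sigma$-algebra is precisely the restriction of $\mathrm{P}^\flat_x$. Combining these two identifications yields $\mathrm{P}_x = \mathrm{P}^\flat_x$ on $\mathcal{G}_{\tau^\flat_\partial}$, as desired.

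The main obstacle is the concatenation/measurability step: one must verify that the glued object $\tilde X$ is a genuine weak solution, i.e.\ that a Brownian motion with the right adaptedness and independence properties does drive it. The cleanest way to bypass technicalities with driving Brownian motions is to reformulate the argument via the martingale problem: showing that, for every $\varphi \in \mathcal{C}^2_c(\R^{2d})$, the process $\varphi(X^\flat_{t \wedge \tau^\flat_\partial}) - \varphi(x) - \int_0^{t \wedge \tau^\flat_\partial} \mathcal{L}_{F,\sigma}\varphi(X^\flat_s)\,\dd s$ is a martingale under $\mathrm{P}^\flat_x$ (which is immediate from the agreement of the two generators on $\overline{D^\flat}$), and then invoking the concatenation theorem for martingale problems (see e.g.\ Stroock--Varadhan) together with weak uniqueness to identify the stopped laws. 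Either route leads to the conclusion; the martingale-problem formulation avoids dealing explicitly with the driving noises, at the price of a slightly more abstract setup.
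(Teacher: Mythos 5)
Your proposal is correct and follows essentially the same route as the paper: glue $X^\flat$ stopped at $\tau^\flat_\partial$ with a weak solution of the original SDE restarted from $X^\flat_{\tau^\flat_\partial}$ (constructed via the measurable family $y \mapsto \mathrm{P}_y$ given by Assumption~\ref{ass:sde}), check the concatenated process solves~\eqref{eq:Langevin} with coefficients $F,\sigma$ because the coefficients agree on $\overline{D^\flat}$, and conclude by weak uniqueness that its law is $\mathrm{P}_x$, whence agreement with $\mathrm{P}^\flat_x$ on $\mathcal{G}_{\tau^\flat_\partial}$. The martingale-problem variant you mention is a reasonable way to formalize the "easily checked" gluing step, but it is not needed beyond what the paper's coupling argument already does.
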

\begin{proof}
    Fix $x \in \R^{2d}$. We construct a coupling of weak solutions to~\eqref{eq:Langevin} with initial condition $x$ with coefficients $F^\flat$, $\sigma^\flat$ and with coefficients $F$, $\sigma$ as follows. Let ${\cal C}:={\cal C}([0,+\infty),\mathbb{R}^{2d})$ be equipped with the canonical filtration. On the space ${\cal C}^2$, we denote by $(X^\flat,X'):=(X^\flat_t,X'_t)_{t\geq 0}$ the canonical process and we consider the probability measure $\mathrm{Q}$ under which $X^\flat$ has distribution $\mathrm{P}^\flat_x$ and, conditionally on $X^\flat$ and given $\{\tau^\flat_\partial<+\infty\}$, $X'$ has distribution $\mathrm{P}_{X^\flat_{\tau^\flat_\partial}}$, where we denote $\tau^\flat_\partial:=\tau^\flat_\partial(X^\flat)$ for convenience. Given $\{\tau^\flat_\partial=+\infty\}$, we take an arbitrary distribution for $X'$, say $\mathrm{P}_0$. Then, under $\mathrm{Q}$, $X^\flat$ is a weak solution to~\eqref{eq:Langevin} with coefficients $F^\flat$, $\sigma^\flat$ and initial condition $x$. In addition, it is  easily checked that the process
    \begin{equation*}
        X_t := \begin{cases}
            X^\flat_t & \text{if $t \leq \tau^\flat_\partial$,}\\
            X'_{t-\tau^\flat_\partial} & \text{otherwise}
        \end{cases}
    \end{equation*}
    is a weak solution to~\eqref{eq:Langevin} with coefficients $F$, $\sigma$ and initial condition $x$, so it has law $\mathrm{P}_x$. Moreover, for any event $A \in \mathcal{G}_{\tau^\flat_\partial}$, it is clear that $X \in A$ if and only if $X^\flat \in A$, which implies that $\mathrm{P}_x(A)=\mathrm{P}^\flat_x(A)$.
\end{proof}

Since the application $\tau^\flat_\partial$ is $\mathcal{G}_{\tau^\flat_\partial}$-measurable, Lemma~\ref{lem:loc-X} shows that $\tau^\flat_\partial(X)$ and $\tau^\flat_\partial(X^\flat)$ have the same law. Therefore, from now on, we shall use the notation $\tau^\flat_\partial$ to refer indifferently to either $\tau^\flat_\partial(X)$ or $\tau^\flat_\partial(X^\flat)$.

\medskip
The following technical results on the small time behavior of the process, which will be used in the sequel, are corollaries of Lemma~\ref{lem:loc-X}.

\begin{lemma}[Localization estimates]\label{lem:loc-estim}
    With the notation introduced above,
    \begin{equation}\label{eq:loc-estim:1}
        \lim_{t \to 0} \sup_{x \in K} \mathbb{P}_x(t \geq \tau^\flat_\partial) = 0,
    \end{equation}
    and, for any $\delta>0$,
    \begin{equation}\label{eq:loc-estim:2}
        \lim_{t \to 0} \sup_{x \in K} \mathbb{P}_x\left(\sup_{s \in [0,t]} |X_s-x| \geq \delta\right) = 0.
    \end{equation}
\end{lemma}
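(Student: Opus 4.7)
I would first reduce everything to the process $X^\flat$, whose coefficients are globally defined and satisfy the conditions of Proposition~\ref{prop:chaudru}. For~\eqref{eq:loc-estim:1}, the event $\{t \geq \tau^\flat_\partial\}$ is $\mathcal{G}_{\tau^\flat_\partial}$-measurable, hence by Lemma~\ref{lem:loc-X} its $\mathbb{P}_x$-probability coincides with its probability under the law of $X^\flat$. For~\eqref{eq:loc-estim:2}, I would use the splitting
\begin{equation*}
    \mathbb{P}_x\!\bigl(\sup_{s\in[0,t]}|X_s - x| \geq \delta\bigr) \leq \mathbb{P}_x\!\bigl(\sup_{s\in[0,t]}|X_s - x| \geq \delta,\, t < \tau^\flat_\partial\bigr) + \mathbb{P}_x(t \geq \tau^\flat_\partial),
\end{equation*}
and observe that the first right-hand term is still $\mathcal{G}_{\tau^\flat_\partial}$-measurable (on $\{t<\tau^\flat_\partial\}$ the trajectory up to time $t$ coincides with its stopped version), so it too equals the corresponding quantity for $X^\flat$; the second term is precisely~\eqref{eq:loc-estim:1}. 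It therefore suffices to prove both limits for $X^\flat$.

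Next, I would establish the small-time displacement bound
\begin{equation*}
    \mathbb{E}\Bigl[\sup_{s\in[0,t]}|X^\flat_s - x|^2\Bigr] \leq C(1+|x|^2)\, t, \qquad t \in [0,1],
\end{equation*}
where $C$ is independent of $x \in \R^{2d}$ and $\mathbb{E}$ denotes expectation under the law of $X^\flat$ started at $x$. This is a routine computation: decomposing the SDE as $q^\flat_s - q = \int_0^s p^\flat_r\,\dd r$ and $p^\flat_s - p = \int_0^s F^\flat(X^\flat_r)\,\dd r + \int_0^s \sigma^\flat(X^\flat_r)\,\dd B_r$, I would apply Cauchy--Schwarz to the drift integral, Burkholder--Davis--Gundy to the stochastic integral, use the linear growth of $F^\flat$ (Remark~\ref{rk:linear-growth}) together with the Hilbert--Schmidt bound $\mathrm{tr}(\sigma^\flat(\sigma^\flat)^T) \leq c_2 d$ inherited from Assumption~\ref{ass:A2glob}, and close the resulting integral inequality via Grönwall to obtain first $\mathbb{E}[\sup_{s\leq 1}|X^\flat_s|^2] \leq C(1+|x|^2)$, and then the displayed bound by substitution.

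With this moment estimate, \eqref{eq:loc-estim:2} for $X^\flat$ follows from Markov's inequality and the boundedness of $|x|$ on the compact set $K$. For~\eqref{eq:loc-estim:1} for $X^\flat$, I would use that the projection $K_q$ of $K$ onto the position coordinate is a compact subset of the open set $\mathcal{O}^\flat$, so $\delta := \mathrm{dist}(K_q, \partial \mathcal{O}^\flat) > 0$. Since exiting $D^\flat = \mathcal{O}^\flat \times \R^d$ forces the position component to leave $\mathcal{O}^\flat$, for every $x = (q,p) \in K$,
\begin{equation*}
    \{t \geq \tau^\flat_\partial\} \subset \Bigl\{\sup_{s\in[0,t]}|q^\flat_s - q| \geq \delta\Bigr\} \subset \Bigl\{\sup_{s\in[0,t]}|X^\flat_s - x| \geq \delta\Bigr\},
\end{equation*}
and the right-hand side is controlled uniformly in $x \in K$ by the moment bound.

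I do not anticipate a serious obstacle. The only mildly delicate point is the measurability-based reduction from $X$ to $X^\flat$ and the attendant book-keeping of events happening before $\tau^\flat_\partial$; the moment estimate and the deduction~\eqref{eq:loc-estim:2} $\Rightarrow$~\eqref{eq:loc-estim:1} are routine consequences of the global assumptions available on $X^\flat$ and of the compactness of $K$.
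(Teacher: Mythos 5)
Your proposal is correct and follows essentially the same route as the paper: reduce both limits to the globally well-posed process $X^\flat$ via Lemma~\ref{lem:loc-X} (using that the relevant events are determined by the trajectory stopped at $\tau^\flat_\partial$), then exploit that positions in $K$ lie at positive distance $\delta$ from $\partial\mathcal{O}^\flat$ and control $\sup_{s\le t}|X^\flat_s-x|$ uniformly over $x\in K$ by a standard small-time estimate built from the linear growth of $F^\flat$, the boundedness of $\sigma^\flat{\sigma^\flat}^T$, a maximal inequality for the stochastic integral, and Gr\"onwall. The only difference is cosmetic: you run Gr\"onwall at the level of second moments and conclude by Chebyshev, whereas the paper uses a pathwise Gr\"onwall bound followed by Markov, Cauchy--Schwarz, Doob and It\^o's isometry.
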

\begin{proof}
    We first establish short-time growth estimates on the process $X^\flat$. For any $s \geq 0$,
    \begin{align*}
        |q^\flat_s-q^\flat_0| \leq \int_0^s |p^\flat_r|\dd r \leq \int_0^s |p^\flat_r-p^\flat_0|\dd r + s |p^\flat_0|,
    \end{align*}
    and by Remark~\ref{rk:linear-growth},
    \begin{align*}
        |p^\flat_s-p^\flat_0| &\leq \int_0^s |F^\flat(q^\flat_r,p^\flat_r)|\dd r + \left|\int_0^s \sigma^\flat(q^\flat_r,p^\flat_r)\dd B_r\right|\\
        &\leq \int_0^s (a^\flat + b^\flat(|q^\flat_r|+|p^\flat_r|))\dd r + \left|\int_0^s \sigma^\flat(q^\flat_r,p^\flat_r)\dd B_r\right|\\
        &\leq b^\flat \int_0^s (|q^\flat_r-q^\flat_0|+|p^\flat_r-p^\flat_0|)\dd r + s\left(a^\flat + b^\flat(|q^\flat_0|+|p^\flat_0|)\right) + \left|\int_0^s \sigma^\flat(q^\flat_r,p^\flat_r)\dd B_r\right|.
    \end{align*}
    As a consequence, for $X^\flat_0=x \in K$,
    \begin{align*}
        |X^\flat_s-x| &= |q^\flat_s-q^\flat_0| + |p^\flat_s-p^\flat_0|\\
        &\leq (b^\flat+1)\int_0^s |X^\flat_r-x| \dd r + C_Ks + \left|\int_0^s \sigma^\flat(q^\flat_r,p^\flat_r)\dd B_r\right|,
    \end{align*}
    where $C_K := \sup\{|p|+a^\flat+b^\flat(|q|+|p|), (q,p) \in K\}$, so by Gronwall's lemma, for any $t \geq 0$,
    \begin{equation}\label{eq:pf-loc-estim:1}
        \sup_{s \in [0,t]} |X^\flat_s-x| \leq \mathrm e^{(b^\flat+1)t}\left(C_K t + \sup_{s \in [0,t]} \left|\int_0^s \sigma^\flat(q^\flat_r,p^\flat_r)\dd B_r\right|\right).
    \end{equation}

    We now use the estimate~\eqref{eq:pf-loc-estim:1} to prove~\eqref{eq:loc-estim:1}. Let us set
    \begin{equation*}
        \delta := \inf\{|q'-q|, (q,p) \in K, q' \in \partial\mathcal{O}^\flat\} > 0,
    \end{equation*}
    so that, for any $t \geq 0$ and $x=(q,p) \in K$,
    \begin{align*}
        \mathbb{P}_x\left(t \geq \tau^\flat_\partial\right) &\leq \mathbb{P}_x\left(\sup_{s \in [0,t]} |q^\flat_s-q| \geq \delta\right)\\
        &\leq \mathbb{P}_x\left(\sup_{s \in [0,t]} |X^\flat_s-x| \geq \delta\right)\\
        &\leq \mathbb{P}_x\left(\mathrm e^{(b^\flat+1)t}\left(C_K t + \sup_{s \in [0,t]} \left|\int_0^s \sigma^\flat(q^\flat_r,p^\flat_r)\dd B_r\right|\right) \geq \delta\right)\\
        &\leq \delta^{-1}\mathrm e^{(b^\flat+1)t}\left(C_K t + \mathbb{E}_x\left[\sup_{s \in [0,t]} \left|\int_0^s \sigma^\flat(q^\flat_r,p^\flat_r)\dd B_r\right|\right]\right),
    \end{align*}
    where we have used~\eqref{eq:pf-loc-estim:1} and the Markov inequality. The Cauchy--Schwarz inequality, the Doob maximal inequality and Ito's isometry then yield
    \begin{align*}
        \mathbb{E}_x\left[\sup_{s \in [0,t]} \left|\int_0^s \sigma^\flat(q^\flat_r,p^\flat_r)\dd B_r\right|\right] &\leq \sqrt{\mathbb{E}_x\left[\sup_{s \in [0,t]} \left|\int_0^s \sigma^\flat(q^\flat_r,p^\flat_r)\dd B_r\right|^2\right]}\\
        &\leq \sqrt{4\mathbb{E}_x\left[\left|\int_0^t \sigma^\flat(q^\flat_r,p^\flat_r)\dd B_r\right|^2\right]} \leq 2\sqrt{t c_2^\flat},
    \end{align*}
    where $c_2^\flat$ is given by Assumption~\ref{ass:A2glob} for $\sigma^\flat$. Since the right-hand side does not depend on $x$ and vanishes with $t$, the conclusion follows.

    We finally prove~\eqref{eq:loc-estim:2}. To proceed, we fix $t \geq 0$, $x \in K$, $\delta>0$, and write
    \begin{align*}
        \mathbb{P}_x\left(\sup_{s \in [0,t]} |X_s-x| \geq \delta\right)  &\leq \mathbb{P}_x\left(\sup_{s \in [0,t]} |X_s-x| \geq \delta, t < \tau^\flat_\partial\right) + \mathbb{P}_x\left(\sup_{s \in [0,t]} |X_s-x| \geq \delta, t \geq \tau^\flat_\partial\right).
    \end{align*}
    On the one hand, by~\eqref{eq:loc-estim:1}, 
    \begin{equation*}
        \lim_{t \to 0} \sup_{x \in K} \mathbb{P}_x\left(\sup_{s \in [0,t]} |X_s-x| \geq \delta, t \geq \tau^\flat_\partial\right) = 0.
    \end{equation*}
    On the other hand, by Lemma~\ref{lem:loc-X},
    \begin{align*}
        \mathbb{P}_x\left(\sup_{s \in [0,t]} |X_s-x| \geq \delta, t < \tau^\flat_\partial\right) = \mathbb{P}_x\left(\sup_{s \in [0,t]} |X^\flat_s-x| \geq \delta, t < \tau^\flat_\partial\right) \leq \mathbb{P}_x\left(\sup_{s \in [0,t]} |X^\flat_s-x| \geq \delta\right),
    \end{align*}
    and by the same arguments as in the proof of~\eqref{eq:loc-estim:1}, the right-hand side vanishes with $t$, uniformly in $x \in K$.
\end{proof}

\subsection{Regularization of $\ind{x \in A}$}\label{ss:reg-ind}

For Sections~\ref{ss:reg-ind} to~\ref{ss:cv-un-u}, we let $f : D \to \R$ be a continuous and bounded function, and set
\begin{equation}\label{eq:def-uf}
    \forall t \geq 0, \quad x \in D, \qquad u_f(t,x) := \mathbb{E}_x\left[\ind{t<\tau_\partial}f(X_t)\right].
\end{equation}
Only in Section~\ref{ss:unreg-ind} we will replace $f$ with the possibly discontinuous function $\ind{x \in A}$. For now, working with a continuous function $f$ enables us to prove the following statement.

\begin{lemma}[Continuity of $u_f$]\label{lem:cont-uf}
    The function $u_f$ defined in~\eqref{eq:def-uf} is continuous on $[0,\infty) \times D$.
\end{lemma}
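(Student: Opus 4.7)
The plan is to combine three ingredients: path continuity of $(X_t)_{t \geq 0}$, the strong Feller property---which follows from Assumption~\ref{ass:sde}(2) via Scheffé, as noted after that assumption---and the small-time localization estimates of Lemma~\ref{lem:loc-estim}. The main obstacle is that $\ind{t < \tau_\partial}$ is not a continuous functional of the path, so strong Feller cannot be applied to $u_f$ directly; I will circumvent this by using the Markov property at a small time $\epsilon$ together with the inclusion $\{\epsilon < \tau_\partial\} \subset \{X_\epsilon \in D\}$, the gap between the two events being controlled by Lemma~\ref{lem:loc-estim}.

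I first treat the boundary case $t_0 = 0$. Since $D$ is open, $\tau_\partial > 0$ almost surely under $\mathbb{P}_x$ for $x \in D$, hence $u_f(0, x) = f(x)$. For $(t_n, x_n) \to (0, x_0)$, I bound $|u_f(t_n, x_n) - f(x_0)|$ by $\|f\|_\infty \mathbb{P}_{x_n}(t_n \geq \tau^\flat_\partial) + \mathbb{E}_{x_n}[|f(X_{t_n}) - f(x_0)|]$, choosing $\mathcal{O}^\flat$ as in Section~\ref{ss:loc-coeffs} so that a fixed compact neighborhood $K$ of $x_0$ lies in $D^\flat$; both terms then vanish as $n \to \infty$ by~\eqref{eq:loc-estim:1}, \eqref{eq:loc-estim:2} and the continuity and boundedness of $f$.

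Next I prove continuity in $t$ at a fixed $x_0 \in D$ with $t_0 > 0$. Under $\mathbb{P}_{x_0}$, path continuity yields $f(X_{t_n}) \to f(X_{t_0})$ almost surely, and $\ind{t_n < \tau_\partial} \to \ind{t_0 < \tau_\partial}$ almost surely off the event $\{\tau_\partial = t_0\}$. To show this event is $\mathbb{P}_{x_0}$-negligible I invoke Assumption~\ref{ass:O}: the definition of $\tau_\partial$ combined with path continuity forces $X_{t_0} \in \partial D = \partial \mathcal{O} \times \R^d$, a set of Lebesgue measure zero in $\R^{2d}$; since $X_{t_0}$ admits a density under $\mathbb{P}_{x_0}$ by Assumption~\ref{ass:sde}(1), the conclusion follows. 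Dominated convergence then yields $u_f(t_n, x_0) \to u_f(t_0, x_0)$.

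For continuity in $x$ at fixed $t_0 > 0$, I fix $\epsilon \in (0, t_0)$ and use the Markov property to write $u_f(t_0, x) = \mathbb{E}_x[\ind{\epsilon < \tau_\partial} u_f(t_0 - \epsilon, X_\epsilon)]$. Replacing $\ind{\epsilon < \tau_\partial}$ with $\ind{X_\epsilon \in D}$ introduces an error bounded by $\|f\|_\infty \mathbb{P}_x(\epsilon \geq \tau^\flat_\partial)$, which by~\eqref{eq:loc-estim:1} is uniformly small in $x$ over a fixed compact neighborhood $K$ of $x_0$ for $\epsilon$ small enough. The problem then reduces to the continuity of $x \mapsto \mathbb{E}_x[g(X_\epsilon)]$ with $g(y) := u_f(t_0 - \epsilon, y) \ind{y \in D}$ bounded by $\|f\|_\infty$, which follows from Scheffé's lemma applied to the continuity of $p(\epsilon; \cdot, y)$ asserted in Assumption~\ref{ass:sde}(2): $|\mathbb{E}_{x_n}[g(X_\epsilon)] - \mathbb{E}_{x_0}[g(X_\epsilon)]| \leq \|f\|_\infty \int |p(\epsilon; x_n, y) - p(\epsilon; x_0, y)| \dd y \to 0$, with a bound uniform in $t_0$. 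Joint continuity at $(t_0, x_0)$ with $t_0 > 0$ then follows from $|u_f(t_n, x_n) - u_f(t_0, x_0)| \leq |u_f(t_n, x_n) - u_f(t_n, x_0)| + |u_f(t_n, x_0) - u_f(t_0, x_0)|$: the first summand is handled by the present argument applied with $t_0$ replaced by $t_n$, since all error bounds are uniform in $t_n$ once $t_n > \epsilon$, while the second vanishes by continuity in $t$.
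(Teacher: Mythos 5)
Your proof is correct and takes essentially the same route as the paper: the localization estimates of Lemma~\ref{lem:loc-estim} for the small-time/initial-time control, negligibility of $\{\tau_\partial=t_0\}$ via Assumptions~\ref{ass:O} and~\ref{ass:sde} for continuity in $t$, and the Markov property at a small intermediate time combined with Scheff\'e's lemma for continuity in $x$, with the exit-indicator error made uniformly small. The only cosmetic differences are that you explicitly extend $u_f(t_0-\epsilon,\cdot)$ by zero outside $D$ before applying Scheff\'e (the paper leaves this implicit) and that in the $t=0$ step you should keep the indicator $\ind{t_n<\tau_\partial}$ inside the expectation, since $f$ is only defined on $D$.
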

\begin{proof}
    We shall proceed differently to show the continuity of $u_f$ at points $(t,x) \in [0,\infty) \times D$, depending on whether $t=0$ or $t>0$. However, both arguments rely on the preliminary result that for any compact subset $K \subset D$,
    \begin{equation}\label{eq:pf-cont-1}
        \lim_{t \to 0} \sup_{x \in K} \mathbb{P}_x(\tau_\partial \leq t) = 0.
    \end{equation}
    This claim immediately follows from~\eqref{eq:loc-estim:1}, since, with the notation of Section~\ref{ss:loc-coeffs}, $\tau_\partial \geq \tau^\flat_\partial$.

    \medskip
    \emph{Continuity at $(t,x)$ with $t=0$.} We fix a compact set $K \subset D$ and show that
    \begin{equation*}
        \lim_{t \to 0} \sup_{x \in K} |u_f(t,x)-f(x)| = 0.
    \end{equation*}
    Since $f$ is continuous on $D$, this shows in particular that the function $u_f$, defined on $[0,\infty) \times D$, is continuous at any point of the form $(0,x)$, with $x \in D$. To proceed, we first write, for any $t \geq 0$, $x \in K$,
    \begin{equation*}
        |u_f(t,x)-f(x)| \leq \mathbb{E}_x\left[\ind{t < \tau_\partial}|f(X_t)-f(x)|\right] + \|f\|_\infty \mathbb{P}_x(\tau_\partial \leq t),
    \end{equation*}
    and observe that by~\eqref{eq:pf-cont-1}, the second term vanishes with $t$, uniformly in $x \in K$. Now let $\epsilon>0$ and let $\alpha>0$ small enough such that the compact set $K_\alpha := \{y \in \R^{2d}: \mathrm{d}(y,K) \leq \alpha\}$ is included in $D$. Since $f$ is uniformly continuous on the compact set $K_\alpha$, there exists $\delta>0$ such that for any $x,y \in K_\alpha$, if $|x-y| \leq \delta$ then $|f(x)-f(y)|\leq \epsilon$, and without loss of generality, $\delta$ can be taken smaller than $\alpha$. As a consequence, 
    \begin{align*}
        \mathbb{E}_x\left[\ind{t < \tau_\partial}|f(X_t)-f(x)|\right] &= \mathbb{E}_x\left[\ind{t < \tau_\partial, |X_t-x| \leq \delta}|f(X_t)-f(x)|\right]\\
        &\quad + \mathbb{E}_x\left[\ind{t < \tau_\partial, |X_t-x| > \delta}|f(X_t)-f(x)|\right]\\
        &\leq \epsilon + 2\|f\|_\infty \mathbb{P}_x(|X_t-x|>\delta),
    \end{align*}
    so the conclusion follows from~\eqref{eq:loc-estim:2}.
    
    \medskip
    \emph{Continuity at $(t,x)$ with $t>0$.} We now fix $t>0$, $x \in D$ and show that $u_f$ is continuous at $(t,x)$. To proceed, we let $(t_n,x_n)_{n \geq 1}$ be a sequence of elements of $[0,\infty) \times D$ which converges to $(t,x)$, and write
    \begin{equation*}
        |u_f(t_n,x_n)-u_f(t,x)| \leq |u_f(t_n,x_n)-u_f(t_n,x)| + |u_f(t_n,x)-u_f(t,x)|.
    \end{equation*}
    First,
    \begin{equation*}
        u_f(t_n,x) = \mathbb{E}_x\left[\ind{t_n < \tau_\partial}f(X_{t_n})\right].
    \end{equation*}
    Since the trajectory of $(X_t)_{t \geq 0}$ is continuous, almost surely, and $f$ is continuous, $f(X_{t_n}) \to f(X_t)$, almost surely. Moreover, by Assumptions~\ref{ass:O} and~\ref{ass:sde},
    \begin{equation*}
        \mathbb{P}_x(t=\tau_\partial) \leq \mathbb{P}_x(q_t \in \partial \mathcal{O}) = \int_{\partial \mathcal{O}\times\R^d} p(t;x,y)\dd y = 0,
    \end{equation*}
    therefore $\ind{t_n < \tau_\partial} \to \ind{t < \tau_\partial}$, almost surely. Since $f$ is bounded, we deduce from the dominated convergence theorem that
    \begin{equation*}
        \lim_{n \to \infty} |u_f(t_n,x)-u_f(t,x)| = 0.
    \end{equation*}
    Now let us fix $s \in (0,t)$. For $n$ large enough, we have $t_n \geq s$ and thus, by the Markov property,
    \begin{equation*}
        u_f(t_n,x_n) = \mathbb{E}_{x_n}\left[\ind{s < \tau_\partial} u_f(t_n-s,X_s)\right], \qquad u_f(t_n,x) = \mathbb{E}_x\left[\ind{s < \tau_\partial} u_f(t_n-s,X_s)\right],
    \end{equation*}
    so that
    \begin{equation*}
        |u_f(t_n,x_n)-u_f(t_n,x)| \leq \left|\mathbb{E}_{x_n}\left[u_f(t_n-s,X_s)\right]-\mathbb{E}_x\left[u_f(t_n-s,X_s)\right]\right| + \|f\|_\infty \left(\mathbb{P}_{x_n}(\tau_\partial \leq s) + \mathbb{P}_x(\tau_\partial \leq s)\right)
    \end{equation*}
    By~\eqref{eq:pf-cont-1}, we have
    \begin{equation*}
        \lim_{s \to 0} \limsup_{n \to \infty} \mathbb{P}_{x_n}(\tau_\partial \leq s) + \mathbb{P}_x(\tau_\partial \leq s) = 0.
    \end{equation*}
    On the other hand, by Assumption~\ref{ass:sde}, for fixed $s$,
    \begin{equation*}
        \mathbb{E}_{x_n}\left[u_f(t_n-s,X_s)\right]-\mathbb{E}_x\left[u_f(t_n-s,X_s)\right] = \int_{\R^{2d}} u_f(t_n-s,y)\left(p(s;x_n,y)-p(s;x,y)\right)\dd y,
    \end{equation*}
    where the transition density $p(s;x,y)$ of $(X_t)_{t \geq 0}$ is continuous in $x$. Therefore, by Scheffé's lemma, since $u_f(t_n-s,y)$ is bounded uniformly in $n$, 
    \begin{equation*}
        \lim_{n \to \infty} \int_{\R^{2d}} u_f(t_n-s,y)\left(p(s;x_n,y)-p(s;x,y)\right)\dd y = 0,
    \end{equation*}
    which concludes the proof. 
\end{proof}

\subsection{Regularization of $F^\flat$ and $\sigma^\flat$}\label{ss:loc-reg-coeffs}

From now on we fix a compact set $K \subset D$ and use the notation introduced in Section~\ref{ss:loc-coeffs}. By the strong Markov property, for any $t \geq 0$, $x \in D$,
\begin{align*}
    u_f(t,x) &= \mathbb{E}_x\left[\ind{\tau_\partial > t}f(X_t) \right]\\
    &= \mathbb{E}_x\left[\ind{\tau_\partial > t, \tau^\flat_\partial > t}f(X_t) \right] + \mathbb{E}_x\left[\ind{\tau_\partial > t, \tau^\flat_\partial \leq t}f(X_t) \right]\\
    &= \mathbb{E}_x\left[\ind{\tau^\flat_\partial > t}f(X_t) \right] + \mathbb{E}_x\left[\ind{\tau^\flat_\partial \leq t}u_f(t-\tau^\flat_\partial,X_{\tau^\flat_\partial})\right],
\end{align*}
and by Lemma~\ref{lem:loc-X}, one may replace $(X_t)_{t \geq 0}$ with $(X^\flat_t)_{t \geq 0}$ in the right-hand side and thus end up with the identity
\begin{equation*}
    u_f(t,x) = \mathbb{E}_x\left[\ind{\tau^\flat_\partial > t}f(X^\flat_t) + \ind{\tau^\flat_\partial \leq t}g(t-\tau^\flat_\partial,X^\flat_{\tau^\flat_\partial})\right],
\end{equation*}
where we have somehow artificially introduced the boundary condition 
\begin{equation*}
    g(t,x):=u_f(t,x),    
\end{equation*}
which by Lemma~\ref{lem:cont-uf} is continuous on $[0,\infty) \times \overline{D^\flat}$. If the functions $F^\flat$ and $\sigma^\flat$ are $\mathcal{C}^\infty$, and globally bounded and Lipschitz continuous, then following the proof of~\cite[Theorem 2.10 (iii)]{LelRamRey} (combined with Remark~2.12 in the same reference), one deduces from this identity that $u_f$ is a classical solution to the kinetic Fokker--Planck equation
\begin{equation}\label{eq:kFP}
    \left\{\begin{aligned}
        \partial_t u_f(t,x) &= \mathcal{L}_{F^\flat,\sigma^\flat} u_f(t,x), & (t,x) \in (0,\infty) \times D^\flat,\\
        u_f(0,x) &= f(x), & x \in D^\flat,\\
        u_f(t,x) &= g(t,x) & \text{$t>0$, $x=(q,p)$ with $q \in \partial \mathcal{O}^\flat$, $p \cdot n^\flat(q) \geq 0$,}
    \end{aligned}\right.
\end{equation}
where $n^\flat(q)$ denotes the outward normal vector at $q \in \partial \mathcal{O}^\flat$. In particular, $u_f$ is a distributional solution to $\partial_t u_f = \mathcal{L}_{F^\flat,\sigma^\flat} u_f$, so as soon as $f \geq 0$, the Harnack inequality for $u_f$ follows from the following statement.

\begin{proposition}[Harnack inequality for smooth coefficients]\label{prop:Harnack-smooth}
    Assume that the functions $F^\flat$ and $\sigma^\flat$ are $\mathcal{C}^\infty$. For any $\epsilon>0$ and $T>0$, there exists a constant $C_{K,\epsilon,T} \geq 0$ which only depends on $F^\flat$ and $\sigma^\flat$ through $\sup_K |F^\flat|$, $\sup_K \|\sigma^\flat{\sigma^\flat}^T\|$ and the uniform ellipticity constant of $\sigma^\flat{\sigma^\flat}^T$ on $K$, such that, for any nonnegative $L^1_\mathrm{loc}$ function $u : (0,\infty) \times D^\flat \to [0,\infty)$ which is a distributional solution to $\partial_t u = \mathcal{L}_{F^\flat,\sigma^\flat}u$ on $(0,\infty) \times D^\flat$, one has
    \begin{equation*}
        \forall t \geq \epsilon, \qquad \sup_{x \in K} u(t,x) \leq C_{K,\epsilon,T} \inf_{x \in K} u(t+T,x).
    \end{equation*}
\end{proposition}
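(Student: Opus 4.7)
The plan is to exploit the $\mathcal{C}^\infty$ smoothness of $F^\flat,\sigma^\flat$ and the uniform ellipticity of $\sigma^\flat(\sigma^\flat)^T$ on $K$ in order to reduce the statement to a chain of local invariant Harnack estimates for the Kolmogorov-type operator $\mathcal{L}_{F^\flat,\sigma^\flat}$. Under these assumptions, the H\"ormander bracket condition is satisfied on a neighbourhood of $K$: the $d$ fields $\sigma^\flat_{\cdot k}\cdot\nabla_p$ span the momentum directions, while their Lie brackets with the transport part $p\cdot\nabla_q+F^\flat\cdot\nabla_p$ produce the position directions. By hypoellipticity of $\partial_t-\mathcal{L}_{F^\flat,\sigma^\flat}$, any nonnegative $L^1_\mathrm{loc}$ distributional solution is automatically $\mathcal{C}^\infty$ on $(0,\infty)\times D^\flat$, so it suffices to prove the inequality for classical nonnegative solutions.

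\medskip
Next, I would invoke the local invariant Harnack inequality of Pascucci--Polidoro for ultraparabolic operators of Kolmogorov type: there exist constants $r_0,h_0,C_0>0$, depending only on $\sup_K|F^\flat|$, $\sup_K\|\sigma^\flat(\sigma^\flat)^T\|$ and the uniform ellipticity constant of $\sigma^\flat(\sigma^\flat)^T$ on $K$, such that for every $(s,y)\in(h_0,\infty)\times K$ with a suitable kinetic cylinder around $(s,y)$ lying in $(0,\infty)\times D^\flat$, every nonnegative classical solution satisfies
$$
u(s-h_0,y)\leq C_0\,u(s,y^+)
$$
for every $y^+$ inside an appropriate forward attainable cone at $(s,y)$, whose geometry is dictated by the characteristics of $p\cdot\nabla_q$ (non-isotropic scaling $q\sim h^{3/2}$, $p\sim h^{1/2}$).

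\medskip
Finally, I would chain these local estimates along a smooth characteristic path to connect any two points of $K$ across the time gap $T$. Given $x,x'\in K$ and $t\geq\epsilon$, construct $\gamma(s)=(q(s),\dot q(s))$ for $s\in[t,t+T]$ with $\gamma(t)=x$, $\gamma(t+T)=x'$ and $\gamma(s)$ staying at uniformly bounded distance from $K$ inside $D^\flat$. Covering $[t,t+T]$ by $N$ consecutive sub-intervals of length of order $h_0$ on each of which the local Harnack inequality applies between successive points on $\gamma$, one iterates to obtain $u(t,x)\leq C_0^N\,u(t+T,x')$, with $N$ depending only on $K,\epsilon,T$. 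Taking the supremum over $x$ and the infimum over $x'$ in $K$ concludes with $C_{K,\epsilon,T}:=C_0^N$.

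\medskip
The main obstacle is the design of the Harnack chain in the degenerate kinetic geometry: the local estimate compares values only along forward cones adapted to the characteristics of the transport field $p\cdot\nabla_q$, so a straight Euclidean interpolation between $x$ and $x'$ is not admissible, and one needs the intermediate points to be ``reachable'' in the sense of the non-isotropic Kolmogorov scaling. The remedy is to exploit the ellipticity in $p$ to steer the momentum freely while driving the position through $\dot q=p$; compactness of $K$ together with a positive lower bound on the time horizon $T$ then ensures that the number of chain steps, and hence the constant $C_0^N$, can be controlled uniformly in $x,x'\in K$ and $t\geq\epsilon$.
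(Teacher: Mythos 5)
Your proposal is correct and takes essentially the same route as the paper: the paper's proof (deferred to~\cite[Theorem~2.15]{LelRamRey}) likewise combines a local Harnack inequality for kinetic Fokker--Planck equations (there taken from Golse--Imbert--Mouhot--Vasseur~\cite{Har}) with a Harnack-chain/chaining argument adapted from Anceschi--Polidoro, respecting the non-isotropic kinetic scaling, exactly as you describe. The only point to watch is your choice of local input (Pascucci--Polidoro rather than~\cite{Har}): you must use a version whose constant depends only on $\sup_K|F^\flat|$, $\sup_K\|\sigma^\flat{\sigma^\flat}^T\|$ and the ellipticity constant (not on derivatives or H\"older norms of the coefficients), since this uniformity is what the statement asserts and what is needed later when the proposition is applied to the mollified coefficients uniformly in $n$.
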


\begin{remark}
    In the statement of Proposition~\ref{prop:Harnack-smooth}, a hypoellipticity argument ensures that as soon as $u \in L^1_\mathrm{loc}$ is a distributional solution to $\partial_t u = \mathcal{L}_{F^\flat,\sigma^\flat}u$ then it is $\mathcal{C}^\infty$, so it makes sense to consider pointwise values of $u(t,x)$.
\end{remark}

Proposition~\ref{prop:Harnack-smooth} is stated and proved in~\cite[Theorem 2.15]{LelRamRey} for the particular case $F^\flat(q,p) = F(q)-\gamma p$, $\sigma^\flat(q,p) = \sigma I_d$. It combines two building bricks: a Harnack inequality for kinetic Fokker--Planck equations in small balls, initially obtained by Golse, Imbert, Mouhot and Vasseur~\cite[Theorem~4]{Har}, and a chaining argument to extend the inequality to arbitrary compact sets, adapted from~\cite{AncPol20}. Both steps can be adapted without any modification to the setting of Proposition~\ref{prop:Harnack-smooth}, so we omit the details of the proof.

In view of Proposition~\ref{prop:Harnack-smooth}, the main issue with the setting of Theorem~\ref{thm:Harnack} is the possible low regularity of $F^\flat$ and $\sigma^\flat$. So we introduce regularized versions thereof, defined as follows: for $n \ge 1$, for $x\in\R^{2d}$,
\begin{equation}\label{eq:def F_n sigma_n}
    F^\flat_n(x)=\int_{\R^{2d}}F^\flat(x-y)\phi_n(y)\,\mathrm dy,\qquad \sigma^\flat_n(x)=\int_{\R^{2d}}\sigma^\flat(x-y)\phi_n(y)\,\mathrm dy,
\end{equation} 
where $\phi_n(y)=n^{2d}\phi(ny)$ with $\phi$ a $\mathcal{C}^\infty$ and compactly supported function on $\R^{2d}$ integrating to $1$. With this definition we get the following statement, which follows from standard arguments and whose proof is omitted.
\begin{lemma}[Regularized coefficients]\label{lem:cv F_n sigma_n}
  One may choose the function $\phi$ such that the following hold:
  \begin{enumerate}[label=(\roman*),ref=\roman*]
    \item We have 
    \begin{equation}\label{eq:cv-F_n}
      F^\flat_n\overset{L^1_{\mathrm{loc}}}{\underset{n\rightarrow\infty}{\longrightarrow}}F^\flat,
    \end{equation}
    and there exist $\tilde{a}^\flat, \tilde{b}^\flat \geq 0$ such that 
    \begin{equation}\label{eq:uniform_linear_growth}
      \forall x \in \R^{2d}, \quad n \geq 1, \qquad |F^\flat_n(x)| \leq \tilde{a}^\flat+\tilde{b}^\flat|x|.
    \end{equation}
  \item We have
    \begin{equation}\label{eq:unif conv sigma}
      \Vert \sigma^\flat_n-\sigma^\flat\Vert_{\infty}\underset{n\rightarrow\infty}{\longrightarrow}0,
    \end{equation}
    and there exist $0 < \tilde{c}^\flat_1 \leq \tilde{c}^\flat_2$ such that for all $x \in \R^{2d}$, $\xi \in \R^d$, for all $n \geq 1$, 
    \begin{equation}\label{eq:tildec1c2}
      \tilde{c}^\flat_1 |\xi|^2 \leq \xi \cdot \sigma^\flat_n{\sigma^\flat_n}^T(x)\xi \leq \tilde{c}^\flat_2|\xi|^2.
    \end{equation}
    Moreover, for all $x = (q,p), x'=(q',p') \in \R^{2d}$, for all $n \geq 1$,
    \begin{equation*}
        \Vert \sigma^\flat_n(x)-\sigma^\flat_n(x')\Vert\leq c^\flat_3(\vert q-q'\vert^{\alpha/3}+\vert p-p'\vert^\alpha),
    \end{equation*}
    with $c^\flat_3$ and $\alpha$ given by Assumption~\ref{ass:A2glob} for $\sigma^\flat$.
  \end{enumerate}
\end{lemma}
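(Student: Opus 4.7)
The plan is to choose the mollifier $\phi$ to be a nonnegative, smooth, compactly supported function with $\int\phi = 1$, whose support radius $r_0 > 0$ will be fixed at the end of the argument. Most of the desired estimates then pass from $F^\flat,\sigma^\flat$ to $F^\flat_n,\sigma^\flat_n$ by averaging against the probability measure $\phi_n(y)\,\mathrm dy$ and invoking Jensen's inequality where convexity helps.

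For item~(i), the $L^1_{\mathrm{loc}}$ convergence~\eqref{eq:cv-F_n} is the classical convergence of mollifications: $F^\flat$ satisfies Assumption~\ref{ass:A1glob} and hence, by Remark~\ref{rk:linear-growth}, has at most linear growth and so lies in $L^1_{\mathrm{loc}}(\R^{2d})$. The uniform linear growth~\eqref{eq:uniform_linear_growth} is obtained by inserting $|F^\flat(x-y)|\leq a^\flat + b^\flat|x| + b^\flat|y|$ inside the defining integral of $F^\flat_n(x)$ and using that $|y|\leq r_0/n \leq r_0$ on the support of $\phi_n$; this gives~\eqref{eq:uniform_linear_growth} with $\tilde a^\flat := a^\flat + b^\flat r_0$ and $\tilde b^\flat := b^\flat$.

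For item~(ii), the Hölder estimate on $\sigma^\flat_n$ is the easiest point: applying the Hölder bound of Assumption~\ref{ass:A2glob} to $\sigma^\flat(x-y)-\sigma^\flat(x'-y)$ and integrating against $\phi_n(y)\,\mathrm dy$ reproduces the same inequality with unchanged constants $c^\flat_3,\alpha$. The same bound, now with $x'=x$, gives
\begin{equation*}
    \|\sigma^\flat_n(x)-\sigma^\flat(x)\| \leq c^\flat_3\bigl((r_0/n)^{\alpha/3} + (r_0/n)^\alpha\bigr),
\end{equation*}
uniformly in $x$, which proves~\eqref{eq:unif conv sigma} and, as we will see, drives the choice of $r_0$. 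The upper ellipticity bound in~\eqref{eq:tildec1c2} follows from Jensen's inequality applied to the convex map $M\mapsto |M^T\xi|^2$: one gets $|\sigma^\flat_n(x)^T\xi|^2 \leq \int |\sigma^\flat(x-y)^T\xi|^2 \phi_n(y)\,\mathrm dy \leq c^\flat_2|\xi|^2$, so $\tilde c^\flat_2 := c^\flat_2$ works.

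The hard point, and the one dictating the choice of $r_0$, is the \emph{lower} ellipticity bound, since Jensen's inequality points the wrong way. Here I would use the uniform closeness estimate displayed above, writing $|\sigma^\flat_n(x)^T\xi| \geq |\sigma^\flat(x)^T\xi| - \|\sigma^\flat_n(x)-\sigma^\flat(x)\|\,|\xi|$ and invoking the ellipticity of $\sigma^\flat$ to obtain
\begin{equation*}
    |\sigma^\flat_n(x)^T\xi| \geq \bigl(\sqrt{c^\flat_1} - c^\flat_3(r_0^{\alpha/3}+r_0^\alpha)\bigr)|\xi|,
\end{equation*}
uniformly in $n\geq 1$ and $x\in\R^{2d}$. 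Choosing $r_0$ small enough that $c^\flat_3(r_0^{\alpha/3}+r_0^\alpha) \leq \sqrt{c^\flat_1}/2$ then yields $\tilde c^\flat_1 := c^\flat_1/4$, and completes the construction of $\phi$.
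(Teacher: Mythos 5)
Your proposal is correct, and it supplies exactly the ``standard arguments'' that the paper explicitly omits for this lemma: mollification with a smooth probability density of small compact support, Jensen/triangle-inequality bounds for the growth, H\"older and upper ellipticity estimates, and the one genuinely non-automatic point --- the lower bound in~\eqref{eq:tildec1c2}, where Jensen goes the wrong way --- handled by the uniform closeness $\Vert\sigma^\flat_n-\sigma^\flat\Vert_\infty\leq c^\flat_3((r_0/n)^{\alpha/3}+(r_0/n)^{\alpha})$ together with a choice of the support radius $r_0$ small enough that the ellipticity constant survives uniformly in $n\geq1$, which is precisely why the statement says ``one may choose $\phi$''. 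No gaps; the constants $\tilde a^\flat=a^\flat+b^\flat r_0$, $\tilde b^\flat=b^\flat$, $\tilde c^\flat_2=c^\flat_2$, $\tilde c^\flat_1=c^\flat_1/4$ and the unchanged $c^\flat_3,\alpha$ are all justified.
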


Since, by Lemma~\ref{lem:cv F_n sigma_n}, $F^\flat_n$ and $\sigma^\flat_n$ are locally Lipschitz continuous, $F^\flat_n$ is of linear growth and $\sigma^\flat_n{\sigma^\flat_n}^T$ is bounded, with coefficients independent of $n$, there exists a unique strong solution $(X^{\flat,n}_t)_{t \ge 0}$ to~\eqref{eq:Langevin} with coefficients $F^\flat_n$ and $\sigma^\flat_n$, see~\cite[Theorem 2.2, p.105]{F}. In addition, this solution satisfies for all $t\geq0$ and for all $x\in\R^{2d}$,
\begin{equation}\label{eq:order 2 moment}
    \mathbb{E}_x\left[\sup_{s\in[0,t]}\vert X^{\flat,n}_s\vert^2\right]\leq \tilde{C}^\flat,
\end{equation}
where $\tilde{C}^\flat$ is a constant only depending on $t,x$ and the constants $\tilde{a}^\flat$, $\tilde{b}^\flat$ and $\tilde{c}^\flat_2$ from Lemma~\ref{lem:cv F_n sigma_n}. Let us define $\tau^{\flat,n}_\partial := \inf\{t > 0: X^{\flat,n}_t \not\in D^\flat\}$, and set, for any $t \geq 0$ and $x \in D^\flat$,
\begin{equation*}
    u^n_f(t,x) = \mathbb{E}_x\left[\ind{\tau^{\flat,n}_\partial > t}f\left(X^{\flat,n}_t\right) + \ind{\tau^{\flat,n}_\partial \leq t}g\left(t-\tau^{\flat,n}_\partial,X^{\flat,n}_{\tau^{\flat,n}_\partial}\right)\right].
\end{equation*}

\begin{proposition}\label{prop:u_n Harnack}
    For any $n \geq 1$, $\partial_t u^n_f = \mathcal{L}_{F_n^\flat,\sigma_n^\flat}u^n_f$ in the distributional sense on $(0,\infty) \times D^\flat$.

    As a consequence, for any $\epsilon>0$ and $T>0$, there exists a constant $C_{K,\epsilon,T} \geq 0$ which depends neither on $f$ nor on $n$ such that, as soon as $f \geq 0$,
    \begin{equation*}
        \forall t \geq \epsilon, \qquad \sup_{x \in K} u^n_f(t,x) \leq C_{K,\epsilon,T} \inf_{x \in K} u^n_f(t+T,x).
    \end{equation*}\end{proposition}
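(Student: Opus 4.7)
The proof splits into two nearly independent tasks: establishing the distributional PDE satisfied by $u^n_f$, and then applying Proposition~\ref{prop:Harnack-smooth} to it. The plan for the first task is to reuse the argument recalled just above for the smooth-coefficient case. By construction the mollified coefficients $F^\flat_n$ and $\sigma^\flat_n$ are $\mathcal{C}^\infty$; by Lemma~\ref{lem:cv F_n sigma_n} the diffusion $\sigma^\flat_n{\sigma^\flat_n}^T$ is uniformly elliptic with constants $\tilde c^\flat_1,\tilde c^\flat_2$ independent of $n$; the boundary datum $g=u_f$ is continuous on $[0,\infty)\times\overline{D^\flat}$ by Lemma~\ref{lem:cont-uf}; and $f$ is continuous on $D$ by assumption. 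These are exactly the ingredients needed to invoke \cite[Theorem~2.10(iii)]{LelRamRey} together with Remark~2.12 therein and conclude that $u^n_f$ is a classical (in particular distributional) solution of $\partial_t u^n_f=\mathcal{L}_{F^\flat_n,\sigma^\flat_n}u^n_f$ on $(0,\infty)\times D^\flat$. The only hypothesis of \cite{LelRamRey} not met verbatim is that the drift is required there to be globally bounded, whereas $F^\flat_n$ is only of linear growth~\eqref{eq:uniform_linear_growth}; since the PDE statement is local on $(0,\infty)\times D^\flat$, I would handle this by truncating $F^\flat_n$ outside a large ball, applying the bounded-drift result, and passing to the limit by a routine localization using the moment bound~\eqref{eq:order 2 moment}.

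For the second task, notice first that when $f\geq 0$ one has $g=u_f\geq 0$ as well, so $u^n_f$ is nonnegative on $[0,\infty)\times D^\flat$. It is therefore a nonnegative $L^1_\mathrm{loc}$ distributional solution of $\partial_t u=\mathcal{L}_{F^\flat_n,\sigma^\flat_n}u$ on $(0,\infty)\times D^\flat$, and Proposition~\ref{prop:Harnack-smooth} applies directly. It remains only to check that its Harnack constant can be chosen uniformly in $n$. By Proposition~\ref{prop:Harnack-smooth} that constant depends on the coefficients solely through $\sup_K|F^\flat_n|$, $\sup_K\Vert\sigma^\flat_n{\sigma^\flat_n}^T\Vert$, and the lower ellipticity constant of $\sigma^\flat_n{\sigma^\flat_n}^T$ on $K$. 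By~\eqref{eq:uniform_linear_growth} the first quantity is bounded by $\tilde a^\flat+\tilde b^\flat\sup_K|x|$ uniformly in $n$, and by~\eqref{eq:tildec1c2} the upper and lower ellipticity constants are $\tilde c^\flat_2,\tilde c^\flat_1$ respectively, also uniform in $n$. This yields a Harnack constant $C_{K,\epsilon,T}$ depending neither on $f$ nor on $n$.

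The main obstacle I anticipate is the first task: the probabilistic representation of $u^n_f$ mixes an initial condition $f$ on $D^\flat$ with a continuous boundary value $g$ imposed on the inflow part $\{q\in\partial\mathcal{O}^\flat,\;p\cdot n^\flat(q)\geq 0\}$ of $\partial D^\flat$, so identifying it with a classical solution of the kinetic Fokker--Planck boundary value problem requires care even when the coefficients are smooth. The $\mathcal{C}^2$ regularity of $\mathcal{O}^\flat$ chosen in Section~\ref{ss:loc-coeffs}, the uniform ellipticity of $\sigma^\flat_n$, the continuity of $g$ up to $\partial D^\flat$ from Lemma~\ref{lem:cont-uf}, and the strong Markov property of $X^{\flat,n}$ at $\tau^{\flat,n}_\partial$ are the ingredients that make the \cite[Theorem~2.10(iii)]{LelRamRey} approach applicable here; once these are in place, the Harnack step is then a formal consequence of Proposition~\ref{prop:Harnack-smooth} and the uniform bounds provided by Lemma~\ref{lem:cv F_n sigma_n}.
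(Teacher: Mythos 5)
Your proposal matches the paper's proof in all essentials: the distributional identity is obtained by invoking \cite[Theorem~2.10(iii) and Remark~2.12]{LelRamRey} for the smooth coefficients after truncating outside a large ball (the paper cuts off both $F^\flat_n$ and $\sigma^\flat_n$ with a smooth $e_k$, notes the truncated and untruncated processes coincide before exiting the ball of radius $k$, and passes to the limit via the moment bound~\eqref{eq:order 2 moment} and dominated convergence), and the uniform-in-$n$ Harnack constant then follows from Proposition~\ref{prop:Harnack-smooth} combined with the uniform bounds of Lemma~\ref{lem:cv F_n sigma_n}. Your ``routine localization'' is exactly the paper's $k\to\infty$ argument, so the proposal is correct and essentially identical in approach.
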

\begin{proof}
    It is clear that the second part of the statement follows from the combination of Lemma~\ref{lem:cv F_n sigma_n} with Proposition~\ref{prop:Harnack-smooth}, so we focus on the first part. 

    By construction, the functions $F^\flat_n$ and $\sigma^\flat_n$ are $\mathcal{C}^\infty$. If they were Lipschitz continuous and bounded, the statement of the Proposition would follow from the same reasoning as in the proof of~\cite[Theorem 2.10 (iii) and Remark~2.12]{LelRamRey}. In order to apply this argument to our setting, let us define
    \begin{equation*}
        F^\flat_{n,k}=F^\flat_n e_k,\qquad\sigma^\flat_{n,k}=\sigma^\flat_n e_k,
    \end{equation*}
    where $e_k$ is a $\mathcal{C}^\infty$, compactly supported function on $\R^{2d}$, which satisfies $e_k(x)=1$ for $|x| \leq k.$ Let us then denote by $(X^{\flat,n,k}_t)_{t\geq0}=(q^{\flat,n,k}_t,p^{\flat,n,k}_t)_{t\geq0}$ the unique strong solution to~\eqref{eq:Langevin} with the coefficients $F^\flat_{n,k}$ and $\sigma^\flat_{n,k}$. Let $\tau_\partial^{\flat,n,k}$ be its first exit time from $D^\flat$. Following the arguments of~\cite[Theorem 2.10 (iii) and Remark~2.12]{LelRamRey}, the function
    \begin{equation*}
        u^{n,k}_f(t,x) := \mathbb{E}_x\left[\ind{\tau^{\flat,n,k}_\partial > t}f\left(X^{\flat,n,k}_t\right) + \ind{\tau^{\flat,n,k}_\partial \leq t}g\left(t-\tau^{\flat,n,k}_\partial,X^{\flat,n,k}_{\tau^{\flat,n,k}_\partial}\right)\right]
    \end{equation*}
    is a classical solution to the Initial-Boundary Value Problem~\eqref{eq:kFP}, with differential operator $\mathcal{L}_{F^\flat_{n,k},\sigma^\flat_{n,k}}$. In particular, we have
    \begin{equation*}
        \partial_t u^{n,k}_f = \mathcal{L}_{F_{n,k}^\flat,\sigma_{n,k}^\flat}u^{n,k}_f
    \end{equation*}
    in the distributional sense on $(0,\infty) \times D^\flat$. Therefore, to conclude the proof, it suffices to show that for any $(t,x) \in (0,\infty) \times D^\flat$,
    \begin{equation}\label{eq:lim-unk}
        \lim_{k \to \infty} u^{n,k}_f(t,x) = u^n_f(t,x).
    \end{equation}
    
    Let 
    \begin{equation*}
        \tau^{\flat,n,k}_k:=\inf\left\{t>0:\vert X^{\flat,n,k}_t\vert>k\right\},\qquad\tau^{\flat,n}_k:=\inf\left\{t>0:\vert X^{\flat,n}_t\vert>k\right\},
    \end{equation*} 
    then for $t>0$, $x\in D^\flat$, 
    \begin{align*}
        u_f^{n,k}(t,x) &=\mathbb{E}_x\left[\left(\ind{\tau^{\flat,n,k}_\partial > t}f(X^{\flat,n,k}_t)  + \ind{\tau^{\flat,n,k}_\partial \leq t}g(t-\tau^{\flat,n,k}_\partial,X^{\flat,n,k}_{\tau^{\flat,n,k}_\partial})\right)\ind{\tau^{\flat,n,k}_k>t}\right]\\
        &\quad + \mathbb{E}_x\left[\left(\ind{\tau^{\flat,n,k}_\partial > t}f(X^{\flat,n,k}_t)  + \ind{\tau^{\flat,n,k}_\partial \leq t}g(t-\tau^{\flat,n,k}_\partial,X^{\flat,n,k}_{\tau^{\flat,n,k}_\partial})\right)\ind{\tau^{\flat,n,k}_k \leq t}\right].
    \end{align*}
    Since $F^\flat_{n,k}=F^\flat_n$ and $\sigma^\flat_{n,k}=\sigma^\flat_n$ on the ball of radius $k$, by~\cite[Theorem 5.2.1]{F}, the processes $(X^{\flat,n,k}_s)_{s\in[0,t]}$ and $(X^{\flat,n}_s)_{s\in[0,t]}$ coincide almost-surely on the event $\{\tau_k^{\flat,n,k}> t\}$, and $\tau_k^{\flat,n,k}=\tau_k^{\flat,n}$ almost-surely. Therefore,
    \begin{equation}\label{eq:egalite en loi}
        \begin{aligned}
            &\mathbb{E}_x\left[\left(\ind{\tau^{\flat,n,k}_\partial > t}f(X^{\flat,n,k}_t)  + \ind{\tau^{\flat,n,k}_\partial \leq t}g(t-\tau^{\flat,n,k}_\partial,X^{\flat,n,k}_{\tau^{\flat,n,k}_\partial})\right)\ind{\tau^{\flat,n,k}_k>t}\right]\\
            & = \mathbb{E}_x\left[\left(\ind{\tau^{\flat,n}_\partial > t}f(X^{\flat,n}_t)  + \ind{\tau^{\flat,n}_\partial \leq t}g(t-\tau^{\flat,n}_\partial,X^{\flat,n}_{\tau^{\flat,n}_\partial})\right)\ind{\tau^{\flat,n}_k>t}\right].
        \end{aligned}
    \end{equation}
    On the other hand, \eqref{eq:order 2 moment} implies that $\tau^{\flat,n}_k \to \infty$ when $k$ goes to infinity, almost surely. Thus, since $f$ and $g$ are bounded, we conclude from the dominated convergence theorem that~\eqref{eq:lim-unk} holds.
\end{proof}  

\subsection{Pointwise convergence of $u^n_f$ toward $u_f$}\label{ss:cv-un-u}

Now that the Harnack inequality is established for the function $u^n_f$ defined before Proposition~\ref{prop:u_n Harnack}, in the present section we remove the regularization of the coefficients $F^\flat$, $\sigma^\flat$ by proving the following statement.

\begin{proposition}[Convergence of $u^n_f$ toward $u_f$]\label{prop:cv-un}
    For any $t > 0$ and $x \in D^\flat$, 
    \begin{equation*}
        \lim_{n \to \infty} u^n_f(t,x) = u_f(t,x).
    \end{equation*}
\end{proposition}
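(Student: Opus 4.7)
The strategy is to show that $X^{\flat,n}$ converges in law to $X^\flat$ on $\mathcal{C}([0,T],\R^{2d})$ for any $T>0$ (in particular, for $T>t$), and then to apply a continuous mapping / Portmanteau argument to the bounded functional
\begin{equation*}
    \Phi_{t}(\mathrm{x}) := \ind{\tau^\flat_\partial(\mathrm{x}) > t}\,f(\mathrm{x}_t) + \ind{\tau^\flat_\partial(\mathrm{x}) \leq t}\,g\bigl(t-\tau^\flat_\partial(\mathrm{x}),\,\mathrm{x}_{\tau^\flat_\partial(\mathrm{x})}\bigr),
\end{equation*}
so that $u^n_f(t,x)=\mathbb{E}_x[\Phi_t(X^{\flat,n})]$ and $u_f(t,x)=\mathbb{E}_x[\Phi_t(X^\flat)]$ by the identity derived at the beginning of Section~\ref{ss:loc-reg-coeffs}. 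Recall that $f$ is continuous and bounded, and $g=u_f$ is continuous and bounded on $[0,\infty)\times\overline{D^\flat}$ by Lemma~\ref{lem:cont-uf}.

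\textbf{Tightness and identification of the limit.} The moment bound~\eqref{eq:order 2 moment} is uniform in $n$ since the constants $\tilde{a}^\flat,\tilde{b}^\flat,\tilde{c}^\flat_2$ from Lemma~\ref{lem:cv F_n sigma_n} do not depend on $n$; combined with a standard BDG/Kolmogorov argument this yields tightness of the laws $(\mathrm{P}^{\flat,n}_x)_{n \geq 1}$ of $X^{\flat,n}$ on $\mathcal{C}([0,T],\R^{2d})$. Any limit point $Y$ solves the martingale problem associated with $\mathcal{L}_{F^\flat,\sigma^\flat}$: the diffusion part passes to the limit thanks to the uniform convergence $\sigma^\flat_n \to \sigma^\flat$ from~\eqref{eq:unif conv sigma}, while for the drift one combines the local $L^1$ convergence $F^\flat_n \to F^\flat$ from~\eqref{eq:cv-F_n} with the uniform Gaussian-type density bound~\eqref{eq:chaudru-gaussian-bound} from Proposition~\ref{prop:chaudru} applied to $X^{\flat,n}$ (the constant $C(t)$ only depending on the parameters $\tilde a^\flat,\tilde b^\flat,\tilde c_1^\flat,\tilde c_2^\flat,c_3^\flat,\alpha$, all uniform in $n$). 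This bound implies that for any $R>0$ and any bounded Borel $\psi$ supported in the ball of radius $R$,
\begin{equation*}
    \mathbb{E}_x\!\left[\int_0^T |\psi(X^{\flat,n}_s)|\,\dd s\right] \leq C(T,R)\,\|\psi\|_{L^1(B_R)},
\end{equation*}
so that $\int_0^{\cdot} F^\flat_n(X^{\flat,n}_s)\,\dd s$ converges in probability to $\int_0^{\cdot} F^\flat(X^\flat_s)\,\dd s$ after truncation in the state variable (the tails being controlled by the linear-growth bound~\eqref{eq:uniform_linear_growth} and the uniform moment estimate~\eqref{eq:order 2 moment}). By the weak uniqueness part of Proposition~\ref{prop:chaudru}, the limit law is necessarily $\mathrm{P}^\flat_x$, hence the full sequence converges: $X^{\flat,n} \Rightarrow X^\flat$ under $\mathbb{P}_x$.

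\textbf{Continuity of $\Phi_t$ at $\mathrm{P}^\flat_x$-almost every trajectory.} The map $\Phi_t$ is bounded (by $\|f\|_\infty + \sup g$), so once continuity at $\mathrm{P}^\flat_x$-a.e.\ point is established, the Portmanteau theorem gives $\mathbb{E}_x[\Phi_t(X^{\flat,n})] \to \mathbb{E}_x[\Phi_t(X^\flat)]$, which is exactly the claim. The key ingredient here is that the hitting time functional $\mathrm{x} \mapsto \tau^\flat_\partial(\mathrm{x})$ is continuous at $\mathrm{x}$ whenever $\mathrm{x}$ does not touch $\partial D^\flat$ before $\tau^\flat_\partial(\mathrm{x})$ and exits $D^\flat$ strictly at time $\tau^\flat_\partial(\mathrm{x})$. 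Since $\mathcal{O}^\flat$ is a $\mathcal{C}^2$ bounded open set (chosen in Section~\ref{ss:loc-coeffs}), it suffices to show that, $\mathrm{P}^\flat_x$-a.s., $q^\flat$ does not remain stuck on $\partial \mathcal{O}^\flat$ on an interval of positive length, and that $\{t = \tau^\flat_\partial\}$ and $\{q_t \in \partial \mathcal{O}^\flat\}$ have $\mathrm{P}^\flat_x$-probability zero. The last two events have zero probability by Assumption~\ref{ass:O} combined with the density bound~\eqref{eq:chaudru-gaussian-bound} (as already used in the proof of Lemma~\ref{lem:cont-uf}). The non-sticking property is a consequence of the hypoelliptic structure of~\eqref{eq:Langevin} and the uniform ellipticity in $p$ from~\eqref{eq:tildec1c2}: at any point $(q,p)$ with $q \in \partial \mathcal{O}^\flat$, the momentum $p$ is immediately driven outside the tangent hyperplane to $\partial\mathcal{O}^\flat$ by the non-degenerate noise, hence $q^\flat$ leaves a neighborhood of $\partial \mathcal{O}^\flat$ in an arbitrarily short time.

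\textbf{Main obstacle.} The delicate step is the last one, namely the a.s.\ continuity of the exit-time functional at the limit process, because the position component of~\eqref{eq:Langevin} is driven by a continuous noiseless ODE and the kinetic diffusion is degenerate in $q$. The argument sketched above — that the non-degenerate diffusion on $p$ prevents the trajectory of $q^\flat$ from grazing $\partial \mathcal{O}^\flat$ tangentially — is standard for smooth kinetic SDEs but requires care in the low-regularity setting of Assumptions~\ref{ass:A1loc}--\ref{ass:A2loc}; it can however be borrowed from a short-time coupling with a Langevin-type reference process (as in~\cite{LelRamRey}) using only the uniform ellipticity constants, which are controlled by Lemma~\ref{lem:cv F_n sigma_n}. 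Once this is available, the three steps above combine to give the claimed pointwise convergence $u^n_f(t,x) \to u_f(t,x)$.
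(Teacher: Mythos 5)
Your overall architecture coincides with the paper's: weak convergence of $X^{\flat,n}$ to $X^\flat$ (tightness plus identification through the martingale problem, where the uniform density bound of Proposition~\ref{prop:chaudru} compensates the mere $L^1_{\mathrm{loc}}$ convergence of $F^\flat_n$), followed by almost-sure continuity of the exit functional and the continuous mapping theorem. But the step you yourself flag as the main obstacle is exactly where the paper has to do genuine work, and your proposed shortcut does not close it. The instant-exit property at tangentially hit boundary points is the content of Lemma~\ref{lem:gamma^0}, and the claim that it ``can be borrowed from~\cite{LelRamRey} using only the uniform ellipticity constants'' is not accurate: the argument of~\cite{LelRamRey} is written for a constant diffusion matrix, and its adaptation here requires (i) a Girsanov transform to remove the drift, whose applicability is justified by the Gaussian-type density bound of~\cite{Chaudru2022}, and (ii) a re-derivation of the law-of-the-iterated-logarithm lower bound for $\int_0^t \widehat M_s\,\mathrm{d}s$ via the Dambis--Dubins--Schwarz time change, which uses not only the bounds $c_1\le\widehat\sigma_s^2\le c_2$ but also the continuity of $\sigma^\flat$, through the almost sure limits $\frac1t\int_0^t\widehat\sigma_s^2\,\mathrm{d}s\to\widehat\sigma^2_0$ and $\frac1{t^3}\int_0^t s^2\widehat\sigma^2_s\,\mathrm{d}s\to\widehat\sigma^2_0/3$. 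Without this lemma the set of trajectories at which your functional $\Phi_t$ is continuous is not known to have full $\mathrm{P}^\flat_x$-measure and the Portmanteau step collapses. Note also that the correct full-measure event is that the path leaves the \emph{closed} set $\overline{D^\flat}$ immediately after $\tau^\flat_\partial$ (the set $\mathcal{A}_{\mathrm{out}}$ in the paper); ``not remaining stuck on $\partial\mathcal{O}^\flat$ on an interval of positive length'' is strictly weaker and would not give continuity of $\tau^\flat_\partial$, since a path that touches $\partial\mathcal{O}^\flat$ at one instant and re-enters $D^\flat$ is a limit of paths that do not exit at all near that time.

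A second, smaller gap lies in the identification of the limit law: the convergence $\mathrm{E}^{\flat,n}_x\Phi\to\mathrm{E}^{\flat}_x\Phi$ is not a direct consequence of weak convergence, because the map $\mathrm{x}\mapsto\int_s^r F^\flat(\mathrm{x}_v)\cdot\nabla_p\varphi(\mathrm{x}_v)\,\mathrm{d}v$ may be discontinuous on $\mathcal{C}([0,t],\R^{2d})$ ($F^\flat$ is only measurable). Your sketch uses the density bound to control $F^\flat_n-F^\flat$ along $X^{\flat,n}$, which is the right tool, but one also needs a second regularization $F^\flat_\delta$ of $F^\flat$, with errors controlled uniformly in $n$ by the same bound, in order to pass to the limit in the term involving $F^\flat$ itself; this is how the paper completes the martingale-problem argument in Lemma~\ref{lem:weak convergence}.
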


The proof of Proposition~\ref{prop:cv-un} is based on the following two lemmas.

\begin{lemma}[Weak convergence]\label{lem:weak convergence}
For all $x\in\R^{2d}$, $t>0$, the strong solution $(X^{\flat,n}_s)_{s\in[0,t]}$ to~\eqref{eq:Langevin} with coefficients $F^\flat_n$, $\sigma^\flat_n$ and initial condition $x$, converges in distribution on $(\mathcal C([0,t],\mathbb{R}^{2d}),\|\cdot\|_\infty)$ to the weak solution $(X^\flat_s)_{s\in[0,t]}$ to~\eqref{eq:Langevin} with coefficients $F^\flat$, $\sigma^\flat$ and initial condition $x$, when $n$ goes to infinity.
\end{lemma}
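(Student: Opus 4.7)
The plan is to proceed by the classical tightness-plus-identification scheme, then upgrade subsequential weak convergence to full convergence via weak uniqueness from Proposition~\ref{prop:chaudru}.

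The first step is to establish tightness of $(X^{\flat,n})_{n\geq 1}$ in $\mathcal{C}([0,t],\R^{2d})$. Because the constants $\tilde a^\flat, \tilde b^\flat$ controlling the linear growth of $F^\flat_n$ and $\tilde c_1^\flat, \tilde c_2^\flat$ controlling the ellipticity and boundedness of $\sigma^\flat_n$ are, by Lemma~\ref{lem:cv F_n sigma_n}, uniform in $n$, a standard Burkholder--Davis--Gundy plus Gronwall computation yields a Kolmogorov-type estimate $\mathbb{E}_x\bigl[|X^{\flat,n}_{s'}-X^{\flat,n}_s|^4\bigr]\leq C_{t,x}|s'-s|^2$ uniformly in $n$ and in $0\leq s\leq s'\leq t$. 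Since the initial condition is deterministic, tightness follows.

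The heart of the argument is the second step: any subsequential weak limit $\tilde X^\flat$ must solve the martingale problem for $\mathcal{L}_{F^\flat,\sigma^\flat}$ started at $x$. For $\varphi\in C_c^2(\R^{2d})$, the diffusion contribution to $\mathcal{L}_{F^\flat_n,\sigma^\flat_n}\varphi$ converges uniformly to that of $\mathcal{L}_{F^\flat,\sigma^\flat}\varphi$ by~\eqref{eq:unif conv sigma} and the Hölder continuity of $\sigma^\flat$ (Assumption~\ref{ass:A2glob}), so this term passes to the limit routinely. The main obstacle is the drift contribution $\int_0^s F^\flat_n(X^{\flat,n}_r)\cdot\nabla_p\varphi(X^{\flat,n}_r)\,\mathrm{d}r$: since $F^\flat$ is merely measurable and~\eqref{eq:cv-F_n} is only $L^1_{\mathrm{loc}}$ convergence, it cannot be handled by standard weak-convergence arguments. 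I would use a three-term mollification decomposition: for a smoothing $F^\flat_{(\delta)}$ of $F^\flat$ (chosen for instance of the form $F^\flat_{n(\delta)}$), bound the difference between the $n$th drift integral and its limiting version by (i) $\|F^\flat_n-F^\flat_{(\delta)}\|_{L^1(R)}$ evaluated along $X^{\flat,n}$, where $R$ is the support of $\nabla_p\varphi$, (ii) the weak-convergence contribution with the continuous integrand $F^\flat_{(\delta)}\cdot\nabla_p\varphi$, and (iii) a symmetric $L^1_{\mathrm{loc}}$ discrepancy evaluated along $\tilde X^\flat$. The crux for controlling (i) and (iii) is the Gaussian-type density bound $p_n(r;x,\cdot)\leq C(t)/r^{2d}$ from Proposition~\ref{prop:chaudru}, which is uniform in $n$ because the constants in Lemma~\ref{lem:cv F_n sigma_n} are $n$-independent, and which transfers to $\tilde X^\flat$ via the Portmanteau theorem applied to open sets combined with outer regularity of the Lebesgue measure. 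Since $C(t)/r^{2d}$ is not integrable at the origin, the time integral must be split as $\int_0^\eta+\int_\eta^s$: on the short slice near the origin I would use only the uniform linear growth bound $|F^\flat|,|F^\flat_n|\leq a^\flat+b^\flat|x|$ from Remark~\ref{rk:linear-growth} on the compact support of $\nabla_p\varphi$, and on the remainder I would invoke the density bound together with~\eqref{eq:cv-F_n}; taking $n,\delta^{-1}\to\infty$ first and then $\eta\to 0$ yields the required convergence.

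Finally, Proposition~\ref{prop:chaudru} ensures weak uniqueness for~\eqref{eq:Langevin} with coefficients $F^\flat,\sigma^\flat$ and initial condition $x$, and hence uniqueness of the associated martingale problem. Every subsequential weak limit of $(X^{\flat,n})$ therefore coincides with the law of $X^\flat$, so the whole sequence converges in distribution on $\bigl(\mathcal{C}([0,t],\R^{2d}),\|\cdot\|_\infty\bigr)$.
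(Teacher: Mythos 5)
Your proposal is correct and follows essentially the same route as the paper: tightness from the $n$-uniform moment bounds of Lemma~\ref{lem:cv F_n sigma_n}, identification of any subsequential limit through the martingale problem with a mollification of the drift and the $n$-uniform density bound of Proposition~\ref{prop:chaudru} (including its transfer to the limit law), and conclusion by weak uniqueness. The only differences are cosmetic: the paper uses a H\"older-norm/Arzel\`a--Ascoli tightness argument rather than the Kolmogorov criterion, and it avoids your $\int_0^\eta$ splitting by testing the martingale problem only at times $s>0$ and extending by a Dynkin-class argument.
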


\begin{lemma}[Instant exit from $\overline{D^\flat}$ when $X^\flat_0 \in \Gamma^{\flat,0}$]\label{lem:gamma^0}
Let $$\Gamma^{\flat,0}=\{ (q,p)\in\partial\mathcal{O}^\flat \times\mathbb{R}^d : p\cdot n^\flat(q)=0 \},$$
where $n^\flat(q)$ is the outward normal vector at $q \in \partial \mathcal{O}^\flat$. For all $x\in\Gamma^{\flat,0}$,
\begin{equation*}
    \mathbb{P}_x\left(\forall \epsilon > 0, \exists t \in [0,\epsilon]: X^\flat_t \not\in \overline{D^\flat}\right)=1.
\end{equation*}
\end{lemma}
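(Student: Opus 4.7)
The plan is to show $\mathbb{P}_x(B^c)=0$, where $B:=\{\forall\epsilon>0,\exists t\in[0,\epsilon]:X^\flat_t\notin\overline{D^\flat}\}$. Since $\overline{D^\flat}=\overline{\mathcal O^\flat}\times\R^d$, this is equivalent to showing that $q^\flat_t$ leaves $\overline{\mathcal O^\flat}$ in arbitrarily small times almost surely. As $\partial\mathcal O^\flat$ is $\mathcal C^2$, I would fix a neighborhood $U$ of $q$ and a $\mathcal C^2$ function $\varphi:U\to\R$ with $\varphi(q')>0\iff q'\notin\overline{\mathcal O^\flat}$, $\varphi(q)=0$, and $\nabla\varphi(q)=n^\flat(q)$. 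Let $\tau_U:=\inf\{t\geq 0:q^\flat_t\notin U\}$, which is positive almost surely by continuity of paths.

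For $t<\tau_U$, since $q^\flat$ is of bounded variation, It\^o's formula gives
\begin{equation*}
Y_t:=\varphi(q^\flat_t)=\int_0^t\nabla\varphi(q^\flat_s)\cdot p^\flat_s\,\dd s.
\end{equation*}
Applying It\^o's formula once more to $Z_s:=\nabla\varphi(q^\flat_s)\cdot p^\flat_s$, and using $Z_0=\nabla\varphi(q)\cdot p=0$, one obtains a decomposition $Z_s=R_s+M_s$, where $M_s:=\int_0^s\nabla\varphi(q^\flat_r)\cdot\sigma^\flat(X^\flat_r)\,\dd B_r$ is a continuous martingale and $R_s$ is of finite variation, gathering the contributions of $\nabla^2\varphi$ and of $F^\flat$. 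By continuity of paths and the linear growth of $F^\flat$ (Remark~\ref{rk:linear-growth}), $|R_s|\leq C(\omega)s$ on $[0,\tau_U\wedge 1]$, so $Y_t=\int_0^t M_s\,\dd s+O(t^2)$ a.s.

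Next I would run a scaling argument. Set $\bar M^\epsilon_u:=\epsilon^{-1/2}M_{\epsilon u}$ for $u\in[0,1]$. By continuity of $X^\flat$ at $0$ and of $\sigma^\flat$, the bracket $\langle\bar M^\epsilon\rangle_u=\int_0^u\nabla\varphi(q^\flat_{\epsilon r})\cdot\sigma^\flat{\sigma^\flat}^T(X^\flat_{\epsilon r})\nabla\varphi(q^\flat_{\epsilon r})\,\dd r$ converges a.s.\ to $h^2 u$, where $h^2:=\nabla\varphi(q)\cdot\sigma^\flat{\sigma^\flat}^T(q,p)\nabla\varphi(q)\geq c_1|\nabla\varphi(q)|^2>0$ by uniform ellipticity (Assumption~\ref{ass:A2glob} applied to $\sigma^\flat$). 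The martingale functional CLT then yields $\bar M^\epsilon\Rightarrow h\beta$ in $\mathcal C([0,1])$ for a standard Brownian motion $\beta$, and continuous mapping gives
\begin{equation*}
\epsilon^{-3/2}Y_\epsilon=\int_0^1\bar M^\epsilon_u\,\dd u+O(\sqrt\epsilon)\ \xrightarrow{d}\ h\int_0^1\beta_u\,\dd u\sim\mathcal N(0,h^2/3),
\end{equation*}
so $\mathbb{P}_x(Y_\epsilon\leq 0)\to 1/2$ as $\epsilon\to 0$. Letting $C_\epsilon:=\{q^\flat_t\in\overline{\mathcal O^\flat}\;\forall t\in[0,\epsilon]\}$, one has $C_\epsilon\subset\{Y_\epsilon\leq 0\}\cup\{\tau_U\leq\epsilon\}$, the family $(C_\epsilon)$ increases as $\epsilon\downarrow 0$, and $\mathbb{P}_x(\tau_U\leq\epsilon)\to 0$; hence $\mathbb{P}_x(B^c)=\lim_\epsilon\mathbb{P}_x(C_\epsilon)\leq 1/2$. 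Since $B^c$ lies in the germ $\sigma$-algebra of $(X^\flat_t)_{t\geq 0}$ at time $0$, Blumenthal's $0$-$1$ law upgrades this to $\mathbb{P}_x(B^c)=0$.

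The main obstacle I anticipate is the martingale functional CLT step, specifically the uniform-on-compacts convergence of the brackets $\langle\bar M^\epsilon\rangle$ in probability; this should follow from pathwise dominated convergence, using continuity of $\sigma^\flat$ and of $\nabla\varphi$ together with continuity of $(X^\flat_t)$ at $0$. A cleaner alternative would bypass the FCLT entirely by applying Dambis--Dubins--Schwarz to write $M_s=W_{\langle M\rangle_s}$ and expressing $\epsilon^{-3/2}\int_0^\epsilon M_s\,\dd s$ directly as a rescaled integrated Brownian motion plus a vanishing error, which yields the Gaussian limit law at once.
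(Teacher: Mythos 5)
Your argument is correct in substance, but it reaches the conclusion by a genuinely different route than the paper. Both proofs isolate the same quantity: the normal displacement at the boundary point reduces, after one integration, to $\int_0^t M_s\,\dd s$ plus a pathwise $O(t^2)$ remainder, where $M$ is the stochastic integral of $\sigma^{\flat T}$ against the (co)normal direction, with nondegenerate bracket thanks to the uniform ellipticity of $\sigma^\flat$. The differences are in how the drift and the final limit are handled. You absorb the drift pathwise into the $O(t^2)$ term (legitimate, since $F^\flat$ has linear growth and the paths are continuous), whereas the paper removes it by Girsanov, justified by the Gaussian bound of~\cite{Chaudru2022}, and then works with the driftless process. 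More importantly, you prove only a distributional statement, $\epsilon^{-3/2}Y_\epsilon\Rightarrow\mathcal N(0,h^2/3)$, which gives $\mathbb{P}_x(B^c)\leq 1/2$ and must then be upgraded by Blumenthal's $0$--$1$ law; the paper instead uses integration by parts, Dambis--Dubins--Schwarz and the Law of the Iterated Logarithm to show $\limsup_{t\to 0}\int_0^t\widehat M_s\,\dd s/\sqrt{2t^3\log\log(t^{-1})}>0$ almost surely, which yields the instant-exit statement directly, with no zero--one law. Your route buys a softer limit theorem (a CLT rather than an LIL), at the price of the extra $0$--$1$ argument; the paper's route is self-contained once the LIL is invoked.

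The one step you should justify rather than merely cite is the Blumenthal $0$--$1$ law: the event $B^c$ is indeed a germ event, but triviality of $\mathcal F_{0+}$ is not automatic for a general Markov process with merely measurable drift. Here it does hold, because the martingale problem for $(F^\flat,\sigma^\flat)$ is well posed (Proposition~\ref{prop:chaudru}): if $A\in\mathcal F_{0+}$ has $\mathbb{P}_x(A)>0$, then $\mathbb{P}_x(\cdot\mid A)$ again solves the martingale problem started from $x$, so uniqueness forces $\mathbb{P}_x(\cdot\mid A)=\mathbb{P}_x$ and hence $\mathbb{P}_x(A)\in\{0,1\}$. With that remark added (and the routine localization of $\varphi$ and of the martingale at $\tau_U$, which you essentially handle through the event $\{\tau_U\leq\epsilon\}$), your proof is complete.
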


Taking the statement of Lemmas~\ref{lem:weak convergence} and~\ref{lem:gamma^0} for granted, we first present the proof of Proposition~\ref{prop:cv-un}. We then detail the proofs of Lemmas~\ref{lem:weak convergence} and~\ref{lem:gamma^0}.

\begin{proof}[Proof of Proposition~\ref{prop:cv-un}]
The reasoning is similar to~\cite[Lemma 3.5]{Ram}. Let $t>0$ and define the function 
\begin{equation*}
    \mathcal{S}_t : (\mathrm{x}_s)_{s \in [0,t]} \in \mathcal{C}([0,t],\R^{2d}) \mapsto \ind{\tau^\flat_\partial > t}f(\mathrm{x}_t) + \ind{\tau^\flat_\partial \leq t} g(t-\tau^\flat_\partial,\mathrm{x}_{\tau^\flat_\partial}),
\end{equation*}
with the convention that $\tau^\flat_\partial=\infty$ if for all $s \in [0,t]$, $\mathrm{x}_s \in D^\flat$. We then have, for any $x \in D$,
\begin{equation*}
    u^n_f(t,x) = \mathbb{E}_x\left[\mathcal{S}_t\left((X^{\flat,n}_s)_{s \in [0,t]}\right)\right], \qquad u_f(t,x) = \mathbb{E}_x\left[\mathcal{S}_t\left((X^\flat_s)_{s \in [0,t]}\right)\right].
\end{equation*}
In view of Lemma~\ref{lem:weak convergence} and the boundedness of $\mathcal{S}_t$, by the continuous mapping theorem it suffices to 
construct a measurable subset $\mathcal{A}$ of $\mathcal{C}([0,t],\R^{2d})$ such that $\mathbb{P}_x((X^\flat_s)_{s \in [0,t]} \in \mathcal{A})=1$ and $\mathcal{S}_t$ is continuous at each element of $\mathcal{A}$ in order to conclude that $u^n_f(t,x)$ converges to $u_f(t,x)$. 

We shall prove that these properties hold for $\mathcal{A} = \mathcal{A}_\mathrm{in} \cup \mathcal{A}_\mathrm{out}$, where
\begin{equation*}
    \mathcal{A}_\mathrm{in} = \{(\mathrm{x}_s)_{s \in [0,t]} : \forall s \in [0,t], \mathrm{x}_s \in D^\flat\}
\end{equation*}
is the set of trajectories which remain in $D^\flat$, and
\begin{equation*}
    \mathcal{A}_\mathrm{out} = \{(\mathrm{x}_s)_{s \in [0,t]} : \tau^\flat_\partial < t \text{ and } \forall \epsilon>0, \exists s \in [\tau^\flat_\partial, \tau^\flat_\partial + \epsilon]: \mathrm{x}_s \not\in \overline{D^\flat}\}
\end{equation*}
is the set of trajectories which reach $\partial D^\flat$ and immediately exit $\overline{D^\flat}$.

Let us first check that $\mathcal{S}_t$ is continuous at every trajectory in $\mathcal{A}$. If $(\mathrm{x}_s)_{s \in [0,t]}$ is in the open set $\mathcal{A}_\mathrm{in}$, then $\tau^\flat_\partial = \infty$, and so is the case of any trajectory close enough. Therefore, the continuity of $\mathcal{S}_t$ at $(\mathrm{x}_s)_{s \in [0,t]}$ follows from the continuity of the mapping $(\mathrm{x}_s)_{s \in [0,t]} \mapsto f(\mathrm{x}_t)$. Now if $(\mathrm{x}_s)_{s \in [0,t]} \in \mathcal{A}_\mathrm{out}$, it follows from elementary arguments that $\tau^\flat_\partial$ and $\mathrm{x}_{\tau^\flat_\partial}$ are continuous at $(\mathrm{x}_s)_{s \in [0,t]}$, so that the continuity of $\mathcal{S}_t$ at $(\mathrm{x}_s)_{s \in [0,t]}$ follows from the continuity of $g$.

We now check that for any $x \in D^\flat$, $(X^\flat_s)_{s \in [0,t]} \in \mathcal{A}$, $\mathbb{P}_x$-almost surely. To proceed, since $\tau^\flat_\partial > t$ if and only if $(X^\flat_s)_{s \in [0,t]} \in \mathcal{A}_\mathrm{in}$, it suffices to show that
\begin{equation*}
    \mathbb{P}_x\left(\tau^\flat_\partial \leq t, (X^\flat_s)_{s \in [0,t]} \not\in \mathcal{A}_\mathrm{out}\right) = 0.
\end{equation*}
First, since by Assumption~\ref{ass:sde} and Lemma~\ref{lem:loc-X},
\begin{equation*}
    \mathbb{P}_x(\tau^\flat_\partial=t) \leq \mathbb{P}_x(q_t \in \partial \mathcal{O}^\flat) = \int_{\partial \mathcal{O}^\flat \times \R^d} p(t;x,y)\dd y = 0,
\end{equation*}
it actually suffices to show that
\begin{equation*}
    \mathbb{P}_x\left(\tau^\flat_\partial < t, (X^\flat_s)_{s \in [0,t]} \not\in \mathcal{A}_\mathrm{out}\right) = 0.
\end{equation*}
Now, by the strong Markov property, we have
\begin{equation*}
    \mathbb{P}_x\left(\tau^\flat_\partial < t, (X^\flat_s)_{s \in [0,t]} \not\in \mathcal{A}_\mathrm{out}\right) = \mathbb{E}_x\left[\ind{\tau^\flat_\partial < t}\mathbb{P}_{X^\flat_{\tau^\flat_\partial}}(\exists \epsilon > 0: \forall s \in [0,\epsilon], X^\flat_s \in \overline{D^\flat})\right].
\end{equation*}
The $\mathcal{C}^2$ regularity of $\partial \mathcal{O}^\flat$ entails, by the exterior sphere condition, that on the event $\tau^\flat_\partial < t$, $X^\flat_{\tau^\flat_\partial}$ necessarily belongs to $\Gamma^{\flat,0} \cup \Gamma^{\flat,+}$, where $\Gamma^{\flat,0}$ is introduced in Lemma~\ref{lem:gamma^0}, and we define
\begin{equation*}
    \Gamma^{\flat,+} := \{(q,p) \in \partial\mathcal{O}^\flat \times \R^d: p \cdot n^\flat(q) > 0\},
\end{equation*}
see~\cite[Sect.~3.4.1]{LelRamRey} for details. By the same geometric arguments, if $X^\flat_{\tau^\flat_\partial} \in \Gamma^{\flat,+}$ then it is elementary to show that $\mathbb{P}_{X^\flat_{\tau^\flat_\partial}}(\exists \epsilon > 0: \forall s \in [0,\epsilon], X^\flat_s \in \overline{D^\flat})=0$, see again~\cite[Sect.~3.4.1]{LelRamRey}. On the other hand, if $X^\flat_{\tau^\flat_\partial} \in \Gamma^{\flat,0}$ then the identity $\mathbb{P}_{X^\flat_{\tau^\flat_\partial}}(\exists \epsilon > 0: \forall s \in [0,\epsilon], X^\flat_s \in \overline{D^\flat})=0$ now follows from Lemma~\ref{lem:gamma^0}. This completes the proof.
\end{proof}

\begin{proof}[Proof of Lemma~\ref{lem:weak convergence}] 
Let us fix $x\in\R^{2d}$ and $\gamma\in(0,1/2)$. For $\mathrm{x} \in\mathcal{C}([0,t],\R^{2d})$ we denote 
$$\Vert \mathrm{x}\Vert_\gamma:=\sup_{0\leq r<s\leq t}\frac{\vert \mathrm{x}_s-\mathrm{x}_r\vert}{\vert s-r\vert^\gamma}.$$ 
 
It follows from the linear growth~\eqref{eq:uniform_linear_growth} and the uniform convergence~\eqref{eq:unif conv sigma} in Lemma~\ref{lem:cv F_n sigma_n} that for all $t>0$, $m\geq1$, there exists a constant $C_{t,m}>0$ that can be taken independently of $n$ such that for all $r,s\in[0,t]$,
$$\sup_{n\geq1}\E_x\left[\vert X^{\flat,n}_s-X^{\flat,n}_r\vert^{2m}\right]\leq C_{t,m}\vert s-r\vert^m.$$
In addition, it can be deduced from~\cite[Theorem 10.1, p. 152]{SchillingPartzsch} that
$$\sup_{n\geq1}\E_x\left[\Vert X^{\flat,n}\Vert_\gamma\right]<\infty.$$

We now denote by $\mathrm{P}_x^{\flat,n}$ the law in the space $\mathcal{C}([0,t],\R^{2d})$ of the process $(X^{\flat,n}_s)_{s \in [0,t]}$ under $\mathbb{P}_x$. We shall also denote by $(\mathrm{X}_s)_{s \in [0,t]}$ the canonical process on this space. For $M>0$, let us define the set 
$$\mathcal{C}^{M}_x:=\{\mathrm{x}\in\mathcal{C}([0,t],\R^{2d}):\Vert \mathrm{x}\Vert_\gamma\leq M, \mathrm{x}_0=x\},$$
which is compact by Arzelà-Ascoli theorem. Let $\epsilon>0$, there exists $M_\epsilon>0$ large enough by Markov inequality such that for all $n\geq1$, $$\mathrm{P}^{\flat,n}_x(\Vert \mathrm{X}\Vert_\gamma>M_\epsilon)\leq\frac{\sup_{n\geq1}\mathbb{E}_x\left[\Vert X^{\flat,n}\Vert_\gamma\right]}{M_\epsilon}\leq\epsilon.$$
Therefore, for all $n\geq1$,
$$\mathrm{P}_x^{\flat,n}((\mathcal{C}^{M_\epsilon}_x)^c)\leq\epsilon,$$
which ensure the tightness of the family of measures $(\mathrm{P}^{\flat,n}_x)_{n \geq 1}$. By Prokhorov's theorem, there exists a subsequence, still denoted by
$({\mathrm P}^{\flat,n}_x)_{n \geq 1}$ for convenience, which weakly converges to a probability measure $\mathrm{P}^\flat_x$ on the space $\mathcal{C}([0,t],\R^{2d})$. In particular it follows that $\mathrm{P}^\flat_x(X_0=x)=1$. The objective is now to prove that $\mathrm{P}^\flat_x$ is the law of $(X^\flat_s)_{s \in [0,t]}$ with initial condition $x$.

Let $s>0$ and $0\leq u_1\leq\ldots\leq u_m< s<r\leq t$, $h_j\in\mathcal{C}^b(\R^{2d})$ and $\varphi\in\mathcal{C}^2_c(\R^{2d})$. Let us define $\Phi_n:\mathcal{C}([0,t],\R^{2d})\to\R$ by
$$\Phi_n(\mathrm{x}):=\left[\varphi(\mathrm{x}_r)-\varphi(\mathrm{x}_s)-\int_s^r\mathcal{L}_{F^\flat_n,\sigma^\flat_n}\varphi(\mathrm{x}_v)\mathrm{d}v\right]\prod_{j=1}^mh_j(\mathrm{x}_{u_j}).$$

Similarly, we define $\Phi$ with $\mathcal{L}_{F^\flat,\sigma^\flat}$. Since $\mathrm{P}^{\flat,n}_x$ solves the martingale problem with coefficients $F^\flat_n$, $\sigma^\flat_n$, we know that $\mathrm{E}^{\flat,n}_x\Phi_n=0$ and we wish to prove that $\mathrm{E}^\flat_x\Phi=0$. There are two difficulties: first, the convergence estimates of Lemma~\ref{lem:cv F_n sigma_n} are not sufficient to deduce that $\Phi_n$ converges to $\Phi$; second, since $F^\flat$ may be discontinuous, the map $\mathrm{x}\mapsto\int_s^r F^\flat(\mathrm{x}_v)\cdot\nabla_p\varphi(\mathrm{x}_v)\mathrm{d}v$ may not be continuous on the set $\mathcal{C}([0,t], \R^{2d})$ (it may be discontinuous even on the subset of constant functions). To overcome these difficulties, we rely on the quantitative upper bound~\eqref{eq:chaudru-gaussian-bound} on the transition density of weak (and, a fortiori, strong) solutions to~\eqref{eq:Langevin} in the setting of Proposition~\ref{prop:chaudru}. Combining this bound with Lemma~\ref{lem:cv F_n sigma_n}, we deduce that there exists $C^\flat(t)$ such that for any $n \geq 1$ and $s \in (0,t]$, the density $p^\flat_n(s;x,y)$ of $X^{\flat,n}_s$ under $\mathbb{P}_x$ satisfies
\begin{equation*}
    p^\flat_n(s;x,y) \leq \frac{C^\flat(t)}{s^{2d}}.
\end{equation*}
As a consequence, for all $s\in(0,t]$ and for any  $L^1$ function $h:\R^{2d} \to \R$,
\begin{equation*}
    \mathrm{E}^{\flat,n}_x h(\mathrm{X}_s) =\int_{\R^{2d}} h(y) p^\flat_n(s;x,y)\dd y\leq \frac{C^\flat(t)}{s^{2d}}\|h\|_{L^1}.
\end{equation*}
Hence, for $n\geq1$,
  \begin{align*}
  \mathrm{E}^{\flat,n}_x\left|\Phi-\Phi_n\right|&\leq C\,\mathrm{E}^{\flat,n}_x\left[\int_s^r|F^\flat_n(\mathrm{X}_v)-F^\flat(\mathrm{X}_v)||\nabla_p\varphi(\mathrm{X}_v)|\mathrm{d}v\right]+C||\sigma^\flat-\sigma^\flat_n||_\infty\\
  &\leq C' C^\flat(t)\int_s^t\frac{\mathrm{d}r}{r^{2d}}\,\int_K|F^\flat_n(y)-F^\flat(y)|\mathrm{d}y+C||\sigma^\flat-\sigma^\flat_n||_\infty,
  \end{align*} 
  for some positive constants $C$ and $C'$, where $K$ is a compact set which contains the support of $\nabla_p \varphi$. Therefore, it follows from~\eqref{eq:unif conv sigma} and~\eqref{eq:cv-F_n} that $\mathrm{E}^{\flat,n}_x\left|\Phi-\Phi_n\right|\underset{n\rightarrow\infty}{\longrightarrow}0$. 
  Consequently, for $n\geq1$, 
  $$\vert\mathrm{E}_x^{\flat,n}\Phi_n-\mathrm{E}^\flat_x\Phi\vert\leq\mathrm{E}_x^{\flat,n}\vert\Phi_{n}-\Phi\vert+\vert\mathrm{E}_x^{\flat,n}\Phi-\mathrm{E}^\flat_x\Phi\vert.$$
  We now show that $\mathrm{E}_x^{\flat,n}\Phi \to \mathrm{E}^\flat_x\Phi$. As was already mentioned above, one must take care to the fact that, in the definition of $\Phi$, the term $\int_s^r F^\flat(\mathrm{x}_v) \cdot \nabla_p \varphi(\mathrm{x}_v)\dd v$ may be a discontinuous function of $(\mathrm{x}_s)_{s \in [0,t]}$. So we show that
  \[
    \mathrm{E}^{\flat,n}_x\left[\int_s^r F^\flat(\mathrm{X}_v) \cdot \nabla_p \varphi(\mathrm{X}_v)\dd v \prod_{j=1}^m h_j(\mathrm{X}_{u_j})\right] \to \mathrm{E}^\flat_x\left[\int_s^r F^\flat(\mathrm{X}_v) \cdot \nabla_p \varphi(\mathrm{X}_v)\dd v \prod_{j=1}^m h_j(\mathrm{X}_{u_j})\right],
  \]
  the convergence of the remaining terms in $\Phi$ simply follow from the weak convergence of $\mathrm{P}^{\flat,n}_x$ to $\mathrm{P}^\flat_x$. Let us denote by $I^n$ and $I$ the terms in the left- and right-hand side, respectively. Moreover, for $\delta>0$, let $F^\flat_\delta$ be a continuous regularization of $F$, which is bounded on the support of $\nabla_p \varphi$ uniformly in $\delta$ and converges in $L^1_\mathrm{loc}$ to $F$ when $\delta \to 0$, and let $I^n_\delta$, $I_\delta$ be the same quantities as $I^n$, $I$ but with $F^\flat_\delta$ in place of $F^\flat$. First, since $F^\flat_\delta$ is continuous and bounded uniformly in $\delta$ on the support of $\nabla_p \varphi$, we have $I^n_\delta \to I_\delta$ for any value of $\delta>0$. Second, we have, by the Markov property for $(X^{\flat,n}_s)_{s \in [0,t]}$,
  \begin{align*}
      |I^n_\delta - I^n| &= \left|\mathbb{E}_x\left[\int_s^t \mathbb{E}_{X^{\flat,n}_{u_m}}\left[(F_\delta^\flat(X^{\flat,n}_{v-u_m})-F^\flat(X^{\flat,n}_{v-u_m})) \cdot \nabla_p \varphi(X^{\flat,n}_{v-u_m})\right]\dd v \prod_{j=1}^m h_j(X^{\flat,n}_{u_j})\right]\right|\\
      &\leq C \int_s^t \int_K |F^\flat_\delta(y)-F^\flat(y)| \mathbb{E}_x\left[p^\flat_n(v-u_m;X^{\flat,n}_{u_m},y)\right]\dd y \dd v\\
      &\leq C(t-s)\frac{C^\flat(t-u_m)}{(s-u_m)^{2d}}\int_K |F^\flat_\delta(y)-F^\flat(y)|\dd y,
  \end{align*}
  where $K$ is a compact set which contains the support of $\nabla_p \varphi$ and $C$ is a uniform bound on $\prod_{j=1}^m h_j$. Therefore $|I^n_\delta - I^n|$ converges to $0$ when $\delta \to 0$, uniformly in $n$. The same argument shows that $|I_\delta - I|$ goes to $0$, which finally proves that $I^n \to I$. 

  Since $\mathrm{E}_x^{\flat,n}\Phi_n=0$ for all $n$, we deduce that $\mathrm{E}^\flat_x\Phi=0$. With classical arguments for Dynkin systems one can replace $h_j$ by bounded $\mathcal{G}_s$-measurable functions, where $\mathcal{G}_s=\sigma(\mathrm{X}_u,u\in[0,s])$ since this is valid for all $0<s\leq r\leq t$ and all smooth compactly supported $\varphi$, it follows that $\mathrm{P}^\flat_x$ solves the martingale problem on $(\mathcal C([0,t],\mathbb{R}^{2d}),(\mathcal{G}_s)_{s\in[0,t]})$ with coefficients $F^\flat$, $\sigma^\flat$ and is therefore the distribution of the unique weak solution to~\eqref{eq:Langevin} by Proposition~\ref{prop:chaudru}.
\end{proof}

\begin{proof}[Proof of Lemma~\ref{lem:gamma^0}]
  Lemma~\ref{lem:gamma^0} is an adaptation to the kinetic SDE~\eqref{eq:Langevin} with coefficients $F^\flat$, $\sigma^\flat$ of~\cite[Proposition~2.8~(i)]{LelRamRey}, with $x \in \Gamma^0$, which is written for the Langevin process. The domain $\mathcal{O}^\flat$ in the present article satisfies the same assumptions as the domain $\mathcal{O}$ in~\cite{LelRamRey}. Therefore, the proof closely follows the lines of~\cite[Sect.~3.4]{LelRamRey}. The first step, detailed in~\cite[Sect.~3.4.2]{LelRamRey}, consists in reducing the problem to the study of the process $(\Check{q}_t,\Check{p}_t)_{t\geq0}$ solution to
  \begin{equation*}
   \left\{
     \begin{aligned}
 &        \mathrm{d}\Check{q}_t=\Check{p}_t\mathrm{d}t , \\
  &       \mathrm{d}\Check{p}_t= \sigma^\flat(\Check{q}_t,\Check{p}_t) \mathrm{d}B_t,
     \end{aligned}
  \right.
  \end{equation*} 
  thanks to the Girsanov theorem, whose application is made legitimate by the Gaussian upper bound on the transition density of $(q^\flat_t,p^\flat_t)_{t \geq 0}$ stated in~\cite[Theorem 1.1]{Chaudru2022}.

  Next, following the lines of~\cite[Sect.~3.4.3]{LelRamRey} with $\sigma B_s$ replaced by $M_s = \int_0^s \sigma^\flat(\Check{q}_r,\Check{p}_r)\mathrm{d} B_r$ shows that, after Eq.~(52) there, to conclude the argument one needs to find a positive function $\Psi(t)$ such that, almost surely,
  \begin{equation*}
      \limsup_{t \to 0} \frac{\int_0^t \widehat{M}_s \mathrm{d}s}{\Psi(t)} > 0, \qquad \lim_{t \to 0} \frac{t^2}{\Psi(t)} = 0,
  \end{equation*}
  with $\widehat{M}_s := M_s \cdot u$ where the unit vector $u \in \R^d$ is defined with the notation of~\cite[Sect.~3.4.3]{LelRamRey} by 
  \begin{equation*}
      u = \frac{d_0(\phi)^{-T}(e_d)}{|d_0(\phi)^{-T}(e_d)|}.
  \end{equation*}
  Defining \begin{equation}\label{eq:hatsigma}
  \widehat{\sigma}^2_s = |\sigma^\flat(\Check{q}_s,\Check{p}_s)u|^2 \in [c_1,c_2]
  \end{equation}
  (where we used Assumption~\ref{ass:A2glob} to get the lower and upper bounds), we then deduce from integration by parts and the Dambis-Dubins-Schwarz theorem that there exist two Brownian motions $\beta^1$, $\beta^2$ such that
  \begin{equation*}
      \int_0^t \widehat{M}_s \mathrm{d}s = t \widehat{M}_t - \int_0^t s \mathrm{d}\widehat{M}_s = t \beta^1_{\int_0^t \widehat{\sigma}_s^2 \mathrm{d}s} - \beta^2_{\int_0^t s^2\widehat{\sigma}_s^2 \mathrm{d}s}.
  \end{equation*}
  Let us set, for $t >0$, $\Psi(t) = \sqrt{2 t^3 \log \log(t^{-1})}$ (which indeed satisfies $\lim_{t \to 0} t^2 / \Psi(t)=0$) and write
  \begin{equation*}
      \frac{\int_0^t \widehat{M}_s \mathrm{d}s}{\Psi(t)} = A^1_t \sqrt{\frac{\ell^1_t}{t}\int_0^t \widehat{\sigma}^2_s \mathrm{d}s} - A^2_t\sqrt{\frac{\ell^2_t}{t^3}\int_0^t s^2\widehat{\sigma}^2_s \mathrm{d}s},
  \end{equation*}
  where 
  \begin{equation*}
      A^1_t = \frac{\beta^1_{\int_0^t \widehat{\sigma}_s^2 \mathrm{d}s}}{\sqrt{2 (\int_0^t \widehat{\sigma}_s^2 \mathrm{d}s) \log \log ((\int_0^t \widehat{\sigma}_s^2 \mathrm{d}s)^{-1})}}, \qquad A^2_t = \frac{\beta^2_{\int_0^t s^2\widehat{\sigma}_s^2 \mathrm{d}s}}{\sqrt{2 (\int_0^t s^2\widehat{\sigma}_s^2 \mathrm{d}s) \log \log ((\int_0^t s^2\widehat{\sigma}_s^2 \mathrm{d}s)^{-1})}},
  \end{equation*}
  and
  \begin{equation*}
      \ell^1_t = \frac{\log \log ((\int_0^t \widehat{\sigma}_s^2 \mathrm{d}s)^{-1})}{\log \log (t^{-1})}, \qquad \ell^2_t = \frac{\log \log ((\int_0^t s^2\widehat{\sigma}_s^2 \mathrm{d}s)^{-1})}{\log \log (t^{-1})}.
  \end{equation*}
  The Law of the Iterated Logarithm for the Brownian motions $\beta^1$ and $\beta^2$ implies that
  \begin{equation*}
      \limsup_{t \to 0} A^1_t = \limsup_{t \to 0} A^2_t = 1,
  \end{equation*}
  while elementary estimates using~\eqref{eq:hatsigma} yield
  \begin{equation*}
      \lim_{t \to 0} \ell^1_t = \lim_{t \to 0} \ell^2_t = 1.
  \end{equation*}
  Last, the continuity of $\sigma^\flat$ provided by Assumption~\ref{ass:A2glob} implies that, almost surely,
  \begin{equation*}
      \lim_{t \to 0} \frac{1}{t}\int_0^t \widehat{\sigma}^2_s \mathrm{d}s = \widehat{\sigma}^2_0, \qquad \lim_{t \to 0} \frac{1}{t^3}\int_0^t s^2\widehat{\sigma}^2_s \mathrm{d}s = \frac{\widehat{\sigma}^2_0}{3}.
  \end{equation*}
  Combining the last three results leads to the estimate
  \begin{equation*}
      \limsup_{t \to 0} \frac{\int_0^t \widehat{M}_s \mathrm{d}s}{\Psi(t)} \geq \widehat{\sigma}_0 \left(1 - \frac{1}{\sqrt{3}}\right) \geq \sqrt{c_1}\left(1 - \frac{1}{\sqrt{3}}\right) > 0,
  \end{equation*}
  which completes the proof.
\end{proof}

\subsection{Completion of the proof of Theorem~\ref{thm:Harnack}}\label{ss:unreg-ind}

Combining the results of Propositions~\ref{prop:u_n Harnack} and~\ref{prop:cv-un}, we deduce that for any continuous, bounded and nonnegative function $f$ on $D$, the function $u_f$ defined by~\eqref{eq:def-uf} satisfies the conclusion of Proposition~\ref{prop:u_n Harnack}. To complete the proof of Theorem~\ref{thm:Harnack}, we now have to establish that this conclusion still holds with $f(x) = \ind{x \in A}$ for any measurable subset $A \subset D$. Again, we use a regularization argument and consider, for $\delta >0$
$$\forall x \in D, \qquad f_\delta(x)=\int_{\R^{2d}} \ind{x-y \in A} K_\delta(y) \dd y$$
where $(K_\delta)_{\delta >0}$ is a smooth approximation of the Dirac mass at zero. By the Lebesgue differentiation theorem, $\dd y$-a.e., $\lim_{\delta \to 0} f_\delta(y)=\ind{y \in A}$. By Assumption~\ref{ass:sde} and the Radon--Nikodym theorem, there exists a measurable function $p^D(t;x,y) \leq p(t;x,y)$ such that
\begin{equation*}
    u_{f_\delta}(t,x) = \mathbb{E}_x[\ind{t < \tau_\partial}f_\delta(X_t)] = \int_{\R^{2d}} f_\delta(y) p^D(t;x,y)\dd y,
\end{equation*}
so by the dominated convergence theorem,
\begin{equation*}
    \lim_{\delta \to 0} u_{f_\delta}(t,x) = u_A(t,x),
\end{equation*}
with the notation of Theorem~\ref{thm:Harnack}. This completes the proof of Theorem~\ref{thm:Harnack}.

\subsection{Proof of Corollary~\ref{cor:Harnack}}\label{ss:pf-cor-harnack}

It remains to show the positivity of $u_A$ when $A$ is a measurable subset of $D$ with a non-empty interior. Let us first notice that $u_A(t,x) \ge u_{\mathring{A}}(t,x) \ge 0$. Assume now that there exists $t_0>0$ and $x_0\in D$ such that $u_A(t_0,x_0)=0$, so that $u_{\mathring{A}}(t_0,x_0)=0$. Then by~\eqref{eq:ineq harnack thm} applied to $u_{\mathring{A}}$, for all $t\in(0,t_0)$, $x\in D$, $u_{\mathring{A}}(t,x)=0$. Let us now fix $x_A\in \mathring{A}$. Since $\mathring{A}$ is an open set and the trajectories of $(X_s)_{s\geq0}$ are continuous almost-surely, $u_{\mathring{A}}(t,x_A)\underset{t\rightarrow0}{\longrightarrow}1$ which is in contradiction with the initial statement.

\section{Proof of Theorem~\ref{thm:critere qsd}}\label{sec:critere qsd}

In all this section, we let Assumptions~\ref{ass:A1loc}, \ref{ass:A2loc}, \ref{ass:sde}, \ref{ass:O} and~\ref{ass:lyapunov} hold. The first step of the proof of Theorem~\ref{thm:critere qsd} consists in checking that there exists a compact set $K \subset D$ on which the following conditions are satisfied:
\begin{itemize}
    \item[($F_1$)] (Local Dobrushin condition) There exist $C_1,t_1>0$ and $\Tilde{\nu}$ a probability measure on $K$ such that $$\forall x\in K,\qquad\forall A\in\mathcal{B}(D),\qquad\mathbb{P}_x(t_1<\tau_\partial,X_{t_1}\in A)\geq C_1 \Tilde{\nu}(A\cap K)$$
    where   $\mathcal{B}(D)$ denotes the Borel sets of $D$.
    \item[($F_2$)] (Global Lyapunov condition) There exist  $C_2,t_2>0$, $\psi_1:D\to[1,\infty)$ a measurable function, and $\alpha_1>\alpha_2>0$ such that 
    \begin{enumerate}[label=(\roman*),ref=\roman*]
        \item\label{F2 a} $\forall x\in D,\quad\mathbb{E}_x[\ind{t_2<\tau_K\land\tau_{\partial}}\psi_1(X_{t_2})]\leq\mathrm{e}^{-\alpha_1t_2}\psi_1(x)$, with $\tau_K:=\inf\{t>0:X_t\in K\}$;
        \item\label{F2 c} $\forall x\in K,\,\forall t\in[0,t_2],\quad\mathbb{E}_x[\ind{t<\tau_\partial}\psi_1(X_{t})]\leq C_2$;
        \item\label{F2 b}$\forall x\in K,\quad\mathrm{e}^{\alpha_2t}\mathbb{P}_x(t < \tau_\partial, X_{t}\in K)\underset{t\rightarrow\infty}{\longrightarrow}\infty$. 
    \end{enumerate}
    \item[($F_3$)] (Harnack inequality) There exists $C_3>0$ such that for all $t\geq0$, $$\sup_{x\in K}\mathbb{P}_x(\tau_\partial>t)\leq C_3\inf_{x\in K}\mathbb{P}_x(\tau_\partial>t).$$
\end{itemize}

This step is detailed in Section~\ref{sec:ProofConditionF}. These conditions then allow to apply the abstract result~\cite[Theorem 3.5]{CV}, which leads to the conclusion of Theorem~\ref{thm:critere qsd}. This is done in Section~\ref{sec:ConclusionProof}.

\begin{remark}
    By the Feller property for $(X_t)_{t \geq 0}$ given by Assumption~\ref{ass:sde}, one has that the strong Markov property, also needed as Condition~($F_0$) in~\cite{CV}, is automatically satisfied.
\end{remark}

\subsection{Proof of Conditions~$(F_1)$,~$(F_2)$ and~$(F_3)$}
\label{sec:ProofConditionF}

For $k\geq1$, let $K_k$ be the following compact subset of $D$
\begin{equation}\label{eq:K_k}
    K_k=\left\{(q,p)\in D: \vert q\vert\leq k, \mathrm{d}_\partial(q)\geq\frac{1}{k}, \vert p\vert\leq k\right\}  
\end{equation} 
where we denote by $\mathrm{d}_\partial(q)$ the Euclidean distance from any point $q \in \mathcal O$ to $\partial\mathcal{O}$.

\begin{proposition}\label{qsd langevin CRAS}
There exists $\alpha_{2,\mathrm{min}}>0$ such that for any $\alpha_2 > \alpha_{2,\mathrm{min}}$, for any $\lambda > \alpha_2$ and for any $p \in (1, \lambda/\alpha_2)$, there exists $k_0$ such that for all $k\geq k_0$ the hypotheses ($F_1$), ($F_2$) and ($F_3$) are satisfied with $K=K_{k}$, $\psi_1 = \phi_\lambda^{1/p}$, where $\phi_\lambda : D \to [1,\infty)$ is given by Assumption~\ref{ass:lyapunov}.
\end{proposition}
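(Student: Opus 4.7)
My plan is to verify each of $(F_1)$, $(F_2)$ and $(F_3)$ separately, with $(F_1)$ and $(F_3)$ resting directly on the Harnack inequality (Theorem~\ref{thm:Harnack}) and its Corollary~\ref{cor:Harnack}, and $(F_2)$ combining the Lyapunov function $\phi_\lambda$ with these estimates. Throughout, I would fix a base index $k^\sharp$ large enough that $K_{k^\sharp}$ has nonempty interior, which is possible since $\mathcal{O}$ is open and nonempty. Later, $k$ will be required large enough that $D_\lambda \subset K_k$; this is feasible because $D_\lambda$ is bounded and closed in $D$ by Assumption~\ref{ass:lyapunov}, hence at positive distance from $\partial D$ and bounded in $p$, which is exactly what the family of sets~\eqref{eq:K_k} captures as $k$ grows.

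For $(F_3)$, I would apply Theorem~\ref{thm:Harnack} to $A = D$, so that $u_D(t,x) = \mathbb{P}_x(\tau_\partial > t)$. For $t \geq \epsilon + T$, monotonicity of $t \mapsto \mathbb{P}_x(\tau_\partial > t)$ combined with Harnack gives $\sup_{x \in K_k} \mathbb{P}_x(\tau_\partial > t) \leq \sup_{x \in K_k} \mathbb{P}_x(\tau_\partial > t-T) \leq C_{K_k,\epsilon,T}\, \inf_{x \in K_k} \mathbb{P}_x(\tau_\partial > t)$; for $t \leq \epsilon + T$ the desired bound is immediate from the positive lower bound $\inf_{x \in K_k} \mathbb{P}_x(\tau_\partial > \epsilon + T) > 0$ provided by Corollary~\ref{cor:Harnack}. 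For $(F_1)$, I would fix $x_0$ in the interior of $K_k$, choose $t_1 > 0$, and apply Theorem~\ref{thm:Harnack} with $\epsilon = T = t_1/2$ and $A$ an arbitrary measurable subset $B \subset D$: the left-hand supremum dominates $\mathbb{P}_{x_0}(t_1/2 < \tau_\partial, X_{t_1/2} \in B)$, so that defining $\tilde{\nu}(\cdot) := \mathbb{P}_{x_0}(t_1/2 < \tau_\partial, X_{t_1/2} \in \cdot \cap K_k)/c_0$ with $c_0 := \mathbb{P}_{x_0}(t_1/2 < \tau_\partial, X_{t_1/2} \in K_k) > 0$ (positive by Corollary~\ref{cor:Harnack}) yields a probability measure on $K_k$ witnessing $(F_1)$.

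For $(F_2)$, the regularity of $\phi_\lambda$ together with $\phi_\lambda \geq 1$ ensures that $\psi_1 = \phi_\lambda^{1/p}$ is itself $\mathcal{C}^1$ in $q$ and $\mathcal{C}^2$ in $p$, and by concavity of $r \mapsto r^{1/p}$ on $[1, \infty)$ together with Assumption~\ref{ass:lyapunov}, a direct computation gives
\begin{equation*}
    \mathcal{L}_{F,\sigma}\psi_1 \leq \frac{1}{p}\phi_\lambda^{1/p-1}\mathcal{L}_{F,\sigma}\phi_\lambda \leq -\frac{\lambda}{p}\psi_1 + \frac{c_\lambda}{p}\mathbb{1}_{D_\lambda},
\end{equation*}
where the last inequality uses $\phi_\lambda^{1/p-1} \leq 1$. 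Condition~(\ref{F2 a}) then follows by a standard localization argument showing that $\mathrm{e}^{(\lambda/p)t}\psi_1(X_{t \wedge \tau_{K_k} \wedge \tau_\partial})$ is a nonnegative supermartingale once $K_k \supset D_\lambda$ (so the indicator vanishes before $\tau_{K_k}$), yielding $\alpha_1 = \lambda/p$. Condition~(\ref{F2 c}) follows from the crude bound $\mathcal{L}_{F,\sigma}\psi_1 \leq c_\lambda/p$, Dynkin's formula and the finiteness of $\sup_{K_k}\psi_1$ (continuity of $\psi_1$ on the compact set $K_k$).

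The main obstacle is condition~(\ref{F2 b}). Applying $(F_1)$ to the fixed base compact $K^\sharp := K_{k^\sharp}$ (with constants $C_1^\sharp, t_1^\sharp$) and iterating the Markov property at times $t_1^\sharp, 2t_1^\sharp, \ldots$ yields $\inf_{y \in K^\sharp}\mathbb{P}_y(nt_1^\sharp < \tau_\partial, X_{nt_1^\sharp} \in K^\sharp) \geq (C_1^\sharp)^n$, which motivates the choice $\alpha_{2,\min} := -\log(C_1^\sharp)/t_1^\sharp$. To upgrade this discrete-time bound to continuous $t$, I would combine the functional inequality $g(s+t_1^\sharp) \geq C_1^\sharp\, g(s)$, with $g(s) := \inf_{y \in K^\sharp}\mathbb{P}_y(s < \tau_\partial, X_s \in K^\sharp)$, with one application of Theorem~\ref{thm:Harnack} to $A = K^\sharp$ controlling the oscillation of $g$ over a window of length $t_1^\sharp$; this yields $g(s) \geq c\, \mathrm{e}^{-\alpha_{2,\min}s}$ for $s$ large. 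Finally, for $x \in K_k \supset K^\sharp$, splitting by the Markov property at a fixed time $t_0 > 0$ and using positivity of $\mathbb{P}_x(t_0 < \tau_\partial, X_{t_0} \in K^\sharp)$ from Corollary~\ref{cor:Harnack} transfers the lower bound from $K^\sharp$ to $K_k$, establishing condition~(\ref{F2 b}) for any $\alpha_2 > \alpha_{2,\min}$.
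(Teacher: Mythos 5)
Your verifications of $(F_1)$, $(F_3)$ and ($F_2$)-\eqref{F2 c} are essentially correct and follow the same route as the paper (Harnack plus Corollary~\ref{cor:Harnack}, and an It\^o/Dynkin argument with the Lyapunov inequality, the paper preferring $\phi_\lambda$ plus Jensen over working directly with $\phi_\lambda^{1/p}$). The decisive gap is in your argument for ($F_2$)-\eqref{F2 a}: you assume that $D_\lambda\subset K_k$ for $k$ large, on the grounds that $D_\lambda$ is bounded and closed in $D$, ``hence at positive distance from $\partial D$''. That implication is false: a bounded set that is only \emph{relatively} closed in $D$ may accumulate on $\partial\mathcal{O}\times\R^d$, and this is exactly what happens in the paper's own applications — in the proof of Theorem~\ref{thm:ex-bounded} (Section~\ref{ss:ex-bounded}) one has $D_\lambda=\{(q,p)\in D:\ |p|\le p_0(\lambda)\}$, which is contained in no $K_k$, since membership in $K_k$ requires $\mathrm{d}_\partial(q)\ge 1/k$ by~\eqref{eq:K_k}. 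Consequently your supermartingale argument breaks down: before the hitting time $\tau_{K_k}$ the process may linger in $D_\lambda\setminus K_k$ (near the spatial boundary), where your drift bound only gives $\mathcal{L}_{F,\sigma}\psi_1\le-\tfrac{\lambda}{p}\psi_1+\tfrac{c_\lambda}{p}$, so no exponential decay follows and $\alpha_1=\lambda/p$ is not obtained. The paper closes precisely this hole with an extra ingredient: the decay estimate~\eqref{Lyapunov_2} up to $\tau_{D_\lambda}$ (not $\tau_{K_k}$), combined with~\eqref{sup proba}, proved via the Harnack inequality, which states that $\sup_{x\in D_\lambda}\mathbb{P}_x(\tau_\partial\wedge\tau_{K_k}>1)\to0$ as $k\to\infty$; these are then fed into the argument of \cite[Section~12.3]{CV}, and this is where the ``$k_0$ large enough'' and the choice of $t_2$ in the proposition actually come from. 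As written, your proof would only establish the statement under the extra hypothesis that $D_\lambda$ is compactly contained in $D$.

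A secondary, more repairable issue is the continuous-time interpolation in your proof of ($F_2$)-\eqref{F2 b}. Theorem~\ref{thm:Harnack} compares $\sup_K u_A(t,\cdot)$ with $\inf_K u_A(t+T,\cdot)$ for a \emph{fixed} shift $T$, so ``one application controlling the oscillation of $g$ over a window'' is not immediate: you still need a positive lower bound on $g$ (or on $\sup_x u_{K^\sharp}(r,x)$) uniformly over a time window of length $t_1^\sharp$, and continuity of $r\mapsto u_{K^\sharp}(r,x)$ is not available for indicator targets, while Corollary~\ref{cor:Harnack} only gives positivity at each fixed time. The paper sidesteps this with a small-time confinement estimate: fixing a ball $B$ compactly inside $L=K_{k_1}$, it uses~\eqref{eq:loc-estim:2} to obtain~\eqref{eq:inf proba ball B and small time}, and then chains the Markov property with $c_2=\inf_{x\in L}u_B(t_0,x)>0$ from Corollary~\ref{cor:Harnack}, giving $\mathbb{P}_x(X_t\in L,\tau_\partial>t)\ge c_1c_2^{\lfloor t/t_0\rfloor}$ for all real $t$ at once and $\alpha_{2,\mathrm{min}}=|\log c_2|/t_0$. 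Your discrete skeleton $g(nt_1^\sharp)\ge (C_1^\sharp)^n$ is fine, but the window step should be justified along these lines, e.g.\ via Lemma~\ref{lem:loc-estim}.
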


We first prove that for any $k_0$ such that $K_{k_0}$ has nonempty interior, $(F_1)$ and $(F_3)$ hold with $K=K_{k_0}$ and $C_1$, $t_1$, $\tilde{\nu}$ and $C_3$ which depend on $k_0$. 

\begin{proof}[Proof of $(F_1)$ and $(F_3)$] Let $k_0$ be large enough such that $K_{k_0}$ has a non-empty interior, and $u_{K_{k_0}}$ be the function defined in \eqref{eq:def u_A}. Let $x_0\in K_{k_0}$, $t_0>0$. Theorem~\ref{thm:Harnack} ensures the existence of $c>0$ such that for all $x\in K_{k_0}$, and all $A$ measurable subset of $D$,
\begin{align*}
    \mathbb{P}_x(X_{t_0+1}\in A,\tau_\partial>t_0+1)&\geq c\, \mathbb{P}_{x_0}(X_{t_0}\in A,\tau_\partial>t_0)\\
    &\geq c \underbrace{\mathbb{P}_{x_0}(X_{t_0}\in K_{k_0},\tau_\partial>t_0)}_{=u_{K_{k_0}}(t_0,x_0)>0\text{ by Corollary~\ref{cor:Harnack}}}\underbrace{\frac{\mathbb{P}_{x_0}(X_{t_0}\in A\cap K_{k_0},\tau_\partial>t_0)}{\mathbb{P}_{x_0}(X_{t_0}\in K_{k_0},\tau_\partial>t_0)}}_{=:\Tilde{\nu}(A\cap K_{k_0})},
\end{align*}
where $\Tilde{\nu}$ is a probability measure on $K_{k_0}$, which yields $(F_1)$.

Theorem~\ref{thm:Harnack} applied to $u_D$ now ensures the existence of $c>0$ such that for all $t\geq1$,
$$\sup_{x\in K_{k_0}}\mathbb{P}_x(\tau_\partial>t)\leq c\inf_{x\in K_{k_0}}\mathbb{P}_x(\tau_\partial>t+1)\leq c\inf_{x\in K_{k_0}}\mathbb{P}_x(\tau_\partial>t),$$ since $u_D$ is a nonincreasing function of $t$. It remains to prove such an inequality for $t\in[0,1]$. For all $t\in[0,1]$, $x,y\in K_{k_0}$,
$$\mathbb{P}_x(\tau_\partial>t)\leq1\leq \frac{\mathbb{P}_y(\tau_\partial>t)}{\mathbb{P}_y(\tau_\partial>1)}.$$
Besides, the function $y\in K_{k_0}\mapsto\mathbb{P}_y(\tau_\partial>1)$ admits a positive lower-bound by Theorem~\ref{thm:Harnack}. Hence, for all $x,y\in K_{k_0}$ and $t\in[0,1]$,
$$\mathbb{P}_x(\tau_\partial>t)\leq c'\mathbb{P}_y(\tau_\partial>t),$$
for some $c'>0$. Therefore, for all $t\geq0$,
$$\sup_{x\in K_{k_0}}\mathbb{P}_x(\tau_\partial>t)\leq (c\lor c')\inf_{x\in K_{k_0}}\mathbb{P}_x(\tau_\partial>t),$$ which concludes the proof of $(F_3)$.
\end{proof}

We next show that for any compact subset $K$, for any $t_2>0$, $\lambda>0$ and $p>1$, ($F_2$)-\eqref{F2 c} holds true with $C_2$ which depends on $K$, $t_2$, $p$ and $\lambda$, and $\psi_1 = \phi_\lambda^{1/p}$. 

\begin{proof}[Proof of ($F_2$)-\eqref{F2 c}] 
    We prove that, for all $\lambda>0$, for all $x\in D$ and all $t\geq 0$,
    \begin{equation}\label{eq:lyapunov_on_D}
    \mathbb{E}_x[\ind{t<\tau_\partial}\phi_\lambda(X_{t})]\leq \mathrm{e}^{c_\lambda t}\phi_\lambda(x).
    \end{equation}
    By Jensen's inequality, this implies that for any choice of $t_2>0$, $\lambda>0$, $p>1$ and $K$ compact in $D$, ($F_2$)-\eqref{F2 c} holds with $\psi_1=\phi_\lambda^{1/p}$ since $\phi_\lambda$ is $\mathcal C^2$ on $D$ and thus bounded on any compact set $K$.
    
    Indeed, let $\lambda>0$ and $x\in D$. Let $\tau_{K_k^c}=\inf\{t>0: X_t \not \in K_k\}$.
    Let $k$ be large enough such that $x \in K_k$. Applying Itô's formula to the process $(\mathrm{e}^{-c_\lambda t}\phi_\lambda(X_{t}))_{t\geq0}$ up to time $t\wedge \tau_{K_k^c}$ one has, $\mathbb{P}_x$-almost surely, 
    \[\begin{aligned}
    \mathrm{e}^{-c_\lambda(t\land\tau_{K_k^c})}\phi_\lambda(X_{t\land\tau_{K_k^c}})&=\phi_\lambda(x)+\int_0^t\ind{s<\tau_{K_k^c}}\mathrm{e}^{-c_\lambda s}(\mathcal{L}_{F,\sigma}\phi_\lambda(X_s)-c_\lambda\phi_\lambda(X_s))\mathrm{d}s\\ 
    &\quad + \int_0^t\ind{s<\tau_{K_k^c}}\mathrm{e}^{-c_\lambda s}\nabla_p\phi_\lambda(X_s)\cdot\sigma(X_s)\mathrm{d}B_s.
    \end{aligned}\]
    Since 
    $\nabla_p \phi_\lambda$ is continuous and therefore bounded on $K_k$, one obtains:
    \begin{align*}
    \mathbb E_x\left[\mathrm{e}^{-c_\lambda(t\land\tau_{K^c_k})}\phi_\lambda(X_{t\land\tau_{K^c_k}})\right]
    &\leq \phi_\lambda(x)+\mathbb E_x\left[\int_0^t\ind{s<\tau_{K^c_k}}\mathrm{e}^{-c_\lambda s}(\mathcal{L}_{F,\sigma}\phi_\lambda(X_s)-c_\lambda\phi_\lambda(X_s)) \, \dd s\right]\\
    &\leq \phi_\lambda(x),
    \end{align*}
    because $\mathcal{L}_{F,\sigma}\phi_\lambda-c_\lambda \phi_\lambda\leq0$ on $D$ since $\phi_\lambda\geq 1$. Since $\phi_\lambda$ is nonnegative, one gets
    $$  \mathbb{E}_x\left[\ind{t<\tau_{K ^c_k}}\phi_\lambda(X_{t})\right]\leq \mathrm{e}^{c_\lambda t}\phi_\lambda(x),
  $$
    which proves~\eqref{eq:lyapunov_on_D} by sending $k$ to $\infty$, using the fact that almost surely, $\mathbb{1}_{\tau_{K ^c_k}>t}$ converges to $\mathbb{1}_{\tau_{\partial}>t}$ thanks to the continuity of the process $(X_t)_{t\ge 0}$ and the fact that $D=\cup_{k \ge 0} K_k$.
\end{proof}

We now show that for any $k_1$ such that $K_{k_1}$ has nonempty interior, there is an $\alpha_{2,\mathrm{min}}(k_1)>0$ such that ($F_2$)-\eqref{F2 b} holds true with $K=K_{k_0}$ for any $k_0\geq k_1$ and any $\alpha_2 > \alpha_{2,\mathrm{min}}(k_1)$.

\begin{proof}[Proof of ($F_2$)-\eqref{F2 b}] 

Let $k_1\geq1$ large enough such that $L:=K_{k_1}$ has a non-empty interior. Let $B$ be a nonempty open ball such that $\overline{B}$ is included in the interior of $L$. This ensures that $\delta:=\mathrm{d}(L^c,B)>0$. Let us first prove that there exists $t_0>0$ small enough such that
\begin{equation}\label{eq:inf proba ball B and small time}
    c_1:=\inf_{y\in \overline{B},s\in[0,t_0]}\mathbb{P}_y\left(X_s\in L, \tau_\partial>s\right)>0.
\end{equation}

Let $t_0>0$ and let $y\in\overline{B}$, $s\in[0,t_0]$. One has that 
\begin{align*}
\mathbb{P}_y\left(X_s\in L, \tau_\partial>s\right)
& \ge 1 - \mathbb{P}_y\left(X_s\not \in L\right) - \mathbb{P}_y \left( \tau_\partial\le s\right)\\
& \ge 1 - \mathbb{P}_y\left(X_s\not \in L\right) - \mathbb{P}_y \left( \tau_{L^c}\le s\right)\\
& \ge 1 - 2 \, \mathbb{P}_y \left( \tau_{L^c}\le s\right)\\
& \ge 1 - 2 \, \mathbb{P}_y \left( \sup_{r \in [0,s]} \mathrm{d}(X_r,\overline{B}) \geq \delta \right).
\end{align*}
As a consequence, 
\begin{equation*}
\inf_{s \in [0,t_0],y\in\overline{B}} \mathbb{P}_y\left(X_s\in L, \tau_\partial>s\right)\ge 1 - 2 \, \sup_{y\in\overline{B}}\mathbb{P}_y \left( \sup_{r \in [0,t_0]} \mathrm{d}(X_r,\overline{B}) \geq \delta \right).
\end{equation*}
By~\eqref{eq:loc-estim:2}, one may take $t_0$ small enough so that
$$\sup_{y\in\overline{B}}\mathbb{P}_y \left( \sup_{r \in [0,t_0]} \mathrm{d}(X_r,\overline{B}) \geq \delta \right)\leq\frac{1}{4},$$
which implies that~\eqref{eq:inf proba ball B and small time} holds with $c_1\geq 1/2$. 

Now, let us take any $t\geq1$. We can write $t=kt_0+s$ with $k=\lfloor t/t_0\rfloor$ and $s\in[0,t_0)$. As a result, by the Markov property at $kt_0$,
\[\begin{aligned}
\mathbb{P}_x\left(X_{t}\in L,\tau_\partial>t\right)&\geq\mathbb{P}_x\left(X_{kt_0}\in B,X_{t}\in L,\tau_\partial>t\right)\\
&= \mathbb{E}_x\left[\ind{\tau_\partial>kt_0,X_{kt_0}\in B}\, \mathbb{P}_{X_{kt_0}}\left(X_{s}\in L, \tau_\partial>s\right)\right]\\
&\geq c_1\mathbb{P}_x\left(X_{kt_0}\in B,\tau_\partial>kt_0\right)
\end{aligned}\]
where $c_1>0$ by~\eqref{eq:inf proba ball B and small time}.  
Now let $c_2:=\inf_{x\in L}u_B(t_0,x)>0$ by Corollary~\ref{cor:Harnack}. It follows from the Markov property at times $(k-1)t_0,(k-2)t_0,\ldots,t_0$ that for any $x \in L$,
\[\begin{aligned}
\mathbb{P}_x\left(X_{t}\in L,\tau_\partial>t\right)&\geq c_1c_2^{\lfloor t/t_0\rfloor}.
\end{aligned}\]
For any $k_0\geq k_1$, we have $c_3:=\inf_{x\in K_{k_0}} u_B(t_0,x)>0$ by Corollary~\ref{cor:Harnack}. Hence, using the Markov property at time $t_0$, we have, for any $x\in K_{k_0}$ and $t\geq 0$,
\[\begin{aligned}
\mathbb{P}_x\left(X_{t+t_0}\in K_{k_0},\tau_\partial>t+t_0\right)\geq \mathbb{P}_x\left(X_{t+t_0}\in L,\tau_\partial>t+t_0\right)&\geq c_3c_1c_2^{\lfloor t/t_0\rfloor}.
\end{aligned}\]
Taking $\alpha_2>\vert \log(c_2)\vert/t_0 =: \alpha_{2,\mathrm{min}}(k_1)$, we have $\inf_{x\in K_{k_0}}\mathrm{e}^{\alpha_2t}\mathbb{P}_x(X_{t}\in K_{k_0},\tau_\partial>t)\underset{t\rightarrow\infty}{\longrightarrow}\infty$,
which yields \eqref{F2 b} with $K=K_{k_0}$ for any $k_0\geq k_1$  and $\alpha_2 > \alpha_{2,\mathrm{min}}(k_1)$.
\end{proof}

To complete the proof, we show that for $k_1$ and $\alpha_2 > \alpha_{2,\mathrm{min}}(k_1)$ given by~($F_2$)-\eqref{F2 b}, for any $\lambda>\alpha_2$, $\alpha_1 \in (\alpha_2,\lambda)$, $p \in (1, \lambda/\alpha_2)$, one can find $t_2>0$ and $k_0 \geq k_1$ large enough so that ($F_2$)-\eqref{F2 a} holds with $\psi_1=\phi_\lambda^{1/p}$ and $K=K_{k_0}$. This fixes the compact set $K=K_{k_0}$ as well as the values of all parameters in Conditions~$(F_1)$,~$(F_2)$ and~$(F_3)$.

\begin{proof}[Proof of ($F_2$)-\eqref{F2 a}]
    Recall that $D_\lambda$ is defined in Assumption~\ref{ass:lyapunov}. Using the same method as in the proof of~\eqref{eq:lyapunov_on_D}, we obtain that, for  any $\lambda>0$, and for all $t\geq0$ and $x\in D\setminus D_\lambda$, 
    \begin{equation}
    \mathbb{E}_x\left[\ind{t<\tau_{\partial}\land\tau_{D_\lambda}}\phi_\lambda(X_{t})\right]\leq \mathrm{e}^{-\lambda t}\phi_\lambda(x),
    \label{Lyapunov_2}
    \end{equation} 
    where  $\tau_{D_\lambda}=\inf \{t>0: X_t\in D_\lambda\}$. In addition, we have
    \begin{equation}\label{sup proba}
    \sup_{x\in D_\lambda}\mathbb{P}_x(\tau_{\partial}\land\tau_{K_k}>1)\underset{k\rightarrow\infty}{\longrightarrow}0.
    \end{equation} 
    Indeed, recall from Assumption~\ref{ass:sde} that the process $(X_t)_{t\geq 0}$ is constructed for all time $t\geq 0$. Then, for $x\in D_\lambda$,
        $$\mathbb{P}_x(\tau_{\partial}\land\tau_{K_k}>1)\leq\mathbb{P}_x(X_1\in D\cap (K_k)^c).$$
        Applying the Harnack inequality~\eqref{eq:ineq harnack thm} on the compact set $\overline{D}_\lambda\subset\tilde{\mathcal{O}} \times \R^d$ to the probability in the right-hand side above, one has that there exists $C>0$ such that for any fixed $x_0\in \overline{D}_\lambda$,
        $$\sup_{x\in \overline{D}_\lambda}\mathbb{P}_x(X_1\in D\cap (K_k)^c)\leq C\mathbb{P}_{x_0}(X_2\in D\cap (K_k)^c)\underset{k\rightarrow\infty}{\longrightarrow}0,$$
        since $\ind{X_2\in D\cap (K_k)^c}\underset{k\rightarrow\infty}{\longrightarrow}0$ almost-surely. This proves the claim~\eqref{sup proba}.

        Following the lines of~\cite[Section~12.3]{CV} starting from Equation~(12.10) therein, and using~\eqref{eq:lyapunov_on_D},~\eqref{Lyapunov_2}, and~\eqref{sup proba}, one finally shows that for $k_1$ and $\alpha_2 > \alpha_{2,\mathrm{min}}(k_1)$ given by~($F_2$)-\eqref{F2 b}, for any $\lambda>\alpha_2$, $\alpha_1 \in (\alpha_2,\lambda)$, $p \in (1, \lambda/\alpha_2)$, one can find $t_2>0$ and $k_0 \geq k_1$ large enough so that ($F_2$)-\eqref{F2 a} holds with $\psi_1=\phi_\lambda^{1/p}$ and $K=K_{k_0}$.
\end{proof}

\subsection{Conclusion of the proof of Theorem~\ref{thm:critere qsd}}
\label{sec:ConclusionProof} 

For $\alpha_2$ satisfying the assumptions of Proposition~\ref{qsd langevin CRAS}, \cite[Theorem 3.5]{CV} shows that for any $\lambda>\alpha_2$ and $p \in (1, \lambda/\alpha_2)$, there exists $k_0$ such that, for all $k\geq k_0$, there exists a unique QSD $\mu$ which satisfies $\mu(\phi_\lambda^{1/p})<\infty$ and $\mu(K_k)>0$. Moreover, by~\cite[Theorem 3.5]{CV} and~\cite[Corollary~2.7]{CV}, there exist $\lambda_0 \in [0,\alpha_2]$, $C,\beta >0$ and $\varphi:D\to\mathbb{R}^*_+$ such that, for all $x\in D$ and all measurable $f:D\to\mathbb R$ with $|f|\leq \phi_\lambda^{1/p}$,
\begin{equation}
\label{eq:in-proof-thm-specgap}
\left|\mathrm e^{\lambda_0 t}\mathbb E_x\left[\ind{t<\tau_\partial}f(X_t)\right]-\varphi(x)\mu(f)\right|\leq C\mathrm e^{-\beta t}\phi_\lambda^{1/p}(x).
\end{equation}
Since~\eqref{eq:in-proof-thm-specgap} entails that for all bounded and measurable $f$, $\varphi(x)\mu(f)$ is the limit of $\mathrm e^{\lambda_0 t}\mathbb E_x(f(X_t)\mathbb 1_{\tau_\partial>t})$, neither $\mu$ nor $\varphi$ depend on the choice of $\lambda$, $p$ and $k$.
Notice that with the notation introduced before Theorem~\ref{thm:critere qsd}, the measure $\mu$ belongs to the set $\mathcal{M}_{\alpha_2}$ and therefore to the set $\mathcal{M}_{\lambda_0}$ since $\lambda_0 \leq \alpha_2$.

We now show that $\mu$ satisfies $\mu(\phi_\lambda^{1/p}) < \infty$ for any $\lambda>\lambda_0$ and $p \in (1, \lambda/\lambda_0)$.
 Let $\alpha'_2 > \lambda_0$. Taking $f(x) = \ind{x \in K_k}$ in~\eqref{eq:in-proof-thm-specgap} with $k$ large enough such that $(F_2)$-\eqref{F2 b} holds true with $K=K_k$ and using the fact that $\mu(K)>0$, we deduce that $(F_2)$-\eqref{F2 b} actually holds true for $\alpha'_2$ instead of $\alpha_2$. The proof of Proposition~\ref{qsd langevin CRAS} then shows that for any $\lambda > \alpha'_2$ and for any $p \in (1, \lambda/\alpha'_2)$, the hypotheses ($F_1$), ($F_2$) and ($F_3$) are satisfied with $\psi_1 = \phi_\lambda^{1/p}$, which by~\cite[Theorem 3.5]{CV} and~\cite[Corollary~2.7]{CV} again shows that for any $\lambda>\lambda_0$ and $p \in (1, \lambda/\lambda_0)$, there exists a unique QSD $\mu'$ which satisfies $\mu'(\phi_\lambda^{1/p})<\infty$ and $\mu'(K_{k_0})>0$ for $k_0$ large enough; and moreover $\mu'$ satisfies~\eqref{eq:in-proof-thm-specgap}. As above, we deduce that $\mu'=\mu$. 

To complete the proof of Theorem~\ref{thm:critere qsd}, we show the uniqueness of a QSD $\mu$ in $\mathcal{M}_{\lambda_0}$. Let $\tilde{\mu} \in \mathcal{M}_{\lambda_0}$ be a QSD for $(X_t)_{t \geq 0}$. By definition of $\mathcal{M}_{\lambda_0}$, there exist $\tilde{\lambda}>\lambda_0$ and $\tilde{p} \in (1, \tilde{\lambda}/\lambda_0)$ such that $\tilde{\mu}(\phi_{\tilde{\lambda}}^{1/\tilde{p}}) < \infty$. Moreover, since $\tilde{\mu}$ is a QSD for $(X_t)_{t \geq 0}$, for any $t>0$,
\begin{equation*}
    \tilde{\mu}(K) = \frac{\mathbb P_{\tilde{\mu}}(X_t \in K, \tau_\partial > t)}{\mathbb P_{\tilde{\mu}}(\tau_\partial > t)}
\end{equation*} and since $K$ has nonempty interior, by Theorem~\ref{thm:Harnack} the right-hand side is positive. As a consequence, by the uniqueness result given by~\cite[Theorem 3.5]{CV} for $\lambda=\tilde{\lambda}$ and $p=\tilde{p}$, and the fact that $\mu$ does not depend on the choice of $(\lambda,p)$, we conclude that $\tilde{\mu}=\mu$.

\section{Proof of Theorems~\ref{thm:ex-bounded} and~\ref{thm:ex-langevin}}\label{sec:example lyapunov}

\subsection{Proof of Theorem~\ref{thm:ex-bounded}}\label{ss:ex-bounded}

To prove Theorem~\ref{thm:ex-bounded}, we need to construct, under Assumptions~\ref{ass:A1loc}, \ref{ass:A2loc} and~\ref{ass:sde}, and with a \emph{bounded} domain $\mathcal{O}$ satisfying Assumption~\ref{ass:O}, a bounded Lyapunov function $\phi_\lambda$ which satisfies Assumption~\ref{ass:lyapunov}. 

To proceed, we let $g : \mathbb{R}_+^* \to (0,1]$ be a $\mathcal{C}^2$ function such that $g(\rho) =\rho$ for $\rho\in(0,1/2)$ and $g(\rho) =1$ for $\rho \geq 1$. Now let $\beta:=1+\sup_{q\in\mathcal{O}}\vert q\vert$ and $p_a>0$ be a parameter which will be defined later. We define $\phi$ on $D$ by\begin{equation*} 
  \phi(q,p):=\left\{\begin{aligned}
    &\beta-\frac{q\cdot p}{\vert p\vert}g(\vert p\vert),&&\vert p\vert\neq0,\\
    &\beta,&&p=0,
  \end{aligned}\right.
\end{equation*} 
then $\phi\geq 1$ on $D$ and $\phi\in \mathcal{C}^2(D)$. Now, for any $\lambda>0$, let
$p_0(\lambda):=1+p_a+4\lambda\beta$ and $D_\lambda\subset D$ be defined as follows:
$$ D_\lambda= \{(q,p)\in D: \vert p\vert\leq p_0(\lambda)\}.$$ 
Then, for $(q,p)\in D\setminus D_\lambda$, $\phi(q,p)=\beta-\frac{q\cdot p}{\vert p\vert}$ and satisfies
\[\begin{aligned}
\mathcal{L}_{F,\sigma}\phi(q,p)&=p\cdot\nabla_q \phi(q,p)+F(q,p)\cdot\nabla_p\phi(q,p)+ \frac{\sigma\sigma^T(q,p)}{2}:\nabla^2_p\phi(q,p)\\ 
&=-\vert p\vert\left(1+ \frac{q\cdot F(q,p)}{\vert p\vert^2}- \frac{(q\cdot p)(p\cdot F(q,p))}{\vert p\vert^4}
-2\frac{(\sigma^T(q,p)q)\cdot (\sigma^T(q,p)p)}{\vert p\vert^4}\right)\\
&\quad -\vert p\vert\left(3\frac{q\cdot p}{\vert p\vert^6}\vert\sigma^T(q,p)p\vert^2-\frac{p\cdot q}{\vert p\vert^4}\mathrm{Tr}(\sigma\sigma^T(q,p))\right). 
\end{aligned}\]
Since $\mathcal{O}$ is bounded, Assumptions~\ref{ass:A1loc} and~\ref{ass:A2loc} yield 
$$\mathcal{L}_{F,\sigma}\phi(q,p)=-\vert p\vert\left(1+\underset{\vert p\vert\rightarrow\infty}{o(1)}\right).$$ 
As a result, if one defines $p_a$ such that for $\vert p\vert\geq p_a$, and $q\in\mathcal{O}$,
$$\mathcal{L}_{F,\sigma}\phi(q,p)\leq-\frac{\vert p\vert}{2},$$ 
then one gets, for $\vert p\vert\geq p_0(\lambda)$,
\[\begin{aligned}
\mathcal{L}_{F,\sigma}\phi(q,p)&\leq-\frac{\vert p\vert}{2}\leq -2\lambda\beta\leq -\lambda\left(\beta-\frac{q\cdot p}{\vert p\vert}\right)=-\lambda\phi(q,p). 
\end{aligned}\] 
Now, on the bounded set $D_\lambda$, $\mathcal{L}_{F,\sigma}\phi(q,p)+\lambda\phi(q,p)$ is bounded by some $c_\lambda<\infty$. Combined with the inequality above this yields that $\mathcal{L}_{F,\sigma}\phi\leq-\lambda\phi+c_\lambda\mathbb{1}_{D_\lambda}$ on $D$. Furthermore, by construction of $\beta$ and $g$, $1\leq\phi\leq 2\beta-1$, so Assumption~\ref{ass:lyapunov} is satisfied and $\phi$ is bounded.

\subsection{Proof of Theorem~\ref{thm:ex-langevin}}\label{ss:ex-langevin}

In this section, we consider the Langevin process~\eqref{eq:Langevin_intro}, with mass matrix $M=I_d$ and an interaction force $F$ given by~\eqref{eq:ex-langevin:1}--\eqref{eq:ex-langevin:2}, with a vector field $\ell$ which is bounded on bounded sets of $\R^d$. We assume that Assumption~\ref{ass:sde} holds true, and fix a set $\mathcal{O} \subset \R^d$ which satisfies Assumption~\ref{ass:O}. Hence, as in Section~\ref{ss:ex-bounded}, the proof of Theorem~\ref{thm:ex-langevin} reduces to the verification of Assumption~\ref{ass:lyapunov}. This is the object of the next statement, which relies on the Lyapunov function introduced in~\cite[Section 4]{LRS}. In the proof, we shall simply denote by $\mathcal{L}$, rather than $\mathcal{L}_{F,\sigma}$, the infinitesimal generator of the associated dynamics.

\begin{lemma}[Lyapunov function for Theorem~\ref{thm:ex-langevin}]\label{lyapunov lemma unbounded}
    In the setting described above, Assumption~\ref{ass:lyapunov} is satisfied.
\end{lemma}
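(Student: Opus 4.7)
The plan is to construct a Lyapunov function of the form $\phi_\lambda = \mathsf{K}^{n(\lambda)}$, where $\mathsf{K}$ is a modified Hamiltonian and the integer exponent $n(\lambda)$ is chosen large depending on $\lambda$. Following the approach of~\cite{LRS}, I would set
\[
\mathsf{K}(q,p) := 1 + U(q) + \tfrac{1}{2}|p|^2 + \delta\, q \cdot p + C|q|^2,
\]
where the parameters $\delta, C > 0$ are to be fixed below. Young's inequality gives $|\delta q \cdot p| \leq \tfrac{1}{2}|p|^2 + \tfrac{\delta^2}{2}|q|^2$, so that if $C \geq \delta^2/2$ then $\mathsf{K} \geq 1$ on $\mathbb{R}^{2d}$. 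One also sees that $\mathsf{K}$ is comparable to $1 + U(q) + |q|^2 + |p|^2$ up to multiplicative constants, and that $\mathsf{K}$ is $\mathcal{C}^1$ in $q$ and polynomial (hence $\mathcal{C}^\infty$) in $p$, so the same holds for any positive integer power $\mathsf{K}^n$.

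The next step is to compute $\mathcal{L}\mathsf{K}$ using the identities
\[
\mathcal{L}\bigl(\tfrac{1}{2}|p|^2 + U\bigr) = -\ell \cdot p - \gamma|p|^2 + \gamma d \mathrm{k}T, \quad \mathcal{L}(q \cdot p) = |p|^2 - q \cdot (\nabla U + \ell) - \gamma\, q \cdot p, \quad \mathcal{L}(|q|^2) = 2\, q \cdot p.
\]
Choosing $C = \delta \gamma / 2$ to eliminate the residual $q \cdot p$ contributions, invoking~\eqref{eq:ex-langevin:2} to control $-\delta\, q \cdot (\nabla U + \ell) \leq -\delta \alpha (U + |q|^2) - \delta |\ell|^2 / \beta^2$, and using Young's inequality $|\ell \cdot p| \leq \tfrac{\eta}{2}|p|^2 + \tfrac{1}{2\eta}|\ell|^2$, I would obtain
\[
\mathcal{L}\mathsf{K} \leq -\bigl(\gamma - \delta - \tfrac{\eta}{2}\bigr)|p|^2 - \delta\alpha(U + |q|^2) - \bigl(\tfrac{\delta}{\beta^2} - \tfrac{1}{2\eta}\bigr)|\ell|^2 + \gamma d\mathrm{k}T.
\]
The hypothesis $\beta < \gamma$ enters exactly here: by AM-GM, $\delta + \beta^2/(4\delta) \geq \beta$ with equality at $\delta = \beta/2$, so taking $\delta = \beta/2$ and $\eta = \beta^2/(2\delta) + \varepsilon$ for $\varepsilon > 0$ small enough makes both bracketed coefficients positive, yielding
\[
\mathcal{L}\mathsf{K} \leq -c_1 (U + |q|^2 + |p|^2) + c_2 \leq -c_3\, \mathsf{K} + c_4
\]
for positive constants $c_1,c_2,c_3,c_4$.

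With this coercive bound in hand, I would set $\phi_\lambda := \mathsf{K}^{n}$ for a positive integer $n = n(\lambda)$ to be fixed. Direct calculation yields
\[
\mathcal{L}(\mathsf{K}^n) = n\, \mathsf{K}^{n-1}\, \mathcal{L}\mathsf{K} + n(n-1) \gamma \mathrm{k}T\, \mathsf{K}^{n-2}\, |p + \delta q|^2,
\]
and the second-order term is controlled via $|p+\delta q|^2 \leq C_\delta\, \mathsf{K}$. Combining this with the bound on $\mathcal{L}\mathsf{K}$ gives $\mathcal{L}(\mathsf{K}^n) \leq -n c_3\, \mathsf{K}^n + C(n)\, \mathsf{K}^{n-1}$. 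Given $\lambda > 0$, I would choose $n$ large enough that $n c_3 > 2\lambda$ and then $M_\lambda$ large enough that the leading term dominates on the complement of the bounded sublevel set $D_\lambda := \{x \in D : \mathsf{K}(x) \leq M_\lambda\}$, so that $\mathcal{L}\phi_\lambda \leq -\lambda \phi_\lambda$ outside $D_\lambda$. On $D_\lambda$, the local boundedness of $\ell$ and $\nabla U$ ensures $\mathcal{L}\phi_\lambda + \lambda \phi_\lambda$ is bounded by some $c_\lambda < \infty$. Together, these estimates give Assumption~\ref{ass:lyapunov}.

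The main obstacle is the coercivity estimate for $\mathcal{L}\mathsf{K}$: the growth condition~\eqref{eq:ex-langevin:2} furnishes a negative $|\ell|^2$ term which must absorb the indefinite cross term $-\ell \cdot p$ arising from the non-conservative part of the force, while preserving a negative $|p|^2$ contribution coming from the friction. Achieving both simultaneously is only possible under the strict inequality $\beta < \gamma$, and the joint optimization over $(\delta, \eta)$ via AM-GM realizes the sharp use of this hypothesis. The rest of the construction, including the passage from $\mathsf{K}$ to $\mathsf{K}^n$, is a now-standard manipulation once the linear Lyapunov estimate is secured.
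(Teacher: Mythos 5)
Your proposal is correct and follows essentially the same route as the paper: both take $\phi_\lambda$ to be a large power $n(\lambda)$ of a modified Hamiltonian $1+U(q)+\tfrac12|p|^2+(\text{cross term }q\cdot p)+(\text{quadratic term in }q)$, obtain the linear estimate $\mathcal{L}\mathsf{K}\leq -c\,\mathsf{K}+c'$ from~\eqref{eq:ex-langevin:2} and $\beta<\gamma$, control the second-order correction by $|\nabla_p\mathsf{K}|^2\lesssim\mathsf{K}$, and conclude with a bounded sublevel set $D_\lambda$. The only difference is cosmetic: the paper imports the first-order computation and parameter constraints from~\cite[Lemmas~4.2--4.3]{LRS}, whereas you re-derive it self-contained with a slightly different (and valid) choice of the coefficients $\delta$, $C$, $\eta$.
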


\begin{proof}
Let us define the following function $H$ as in~\cite[Definition 4.1]{LRS}, for all $(q,p)\in\R^{2d}$,
$$H(q,p):=U(q) + \frac{1}{2}|p|^{2}+\frac{\gamma-\delta}{2} q \cdot p +\frac{(\gamma-\delta)^{2}}{4}|q|^{2},$$
where $\delta\in(0,\gamma)$ is a constant small enough such that, cf.~\cite[Equation 4.2]{LRS},  
$$\frac{\delta(\gamma-\delta)}{2}\leq\alpha\;,\qquad\frac{2\delta}{\gamma-\delta}\leq\alpha\;,\;\;\;\;\text{and}\qquad\beta^{2}\leq\gamma(\gamma-\delta),$$
where $\alpha,\beta$ are the constants appearing in~\eqref{eq:ex-langevin:2}. A computation following~\cite[Lemma 4.3]{LRS} yields that for all $(q,p)\in\mathbb{R}^{2d}$,
$$\mathcal{L}H(q,p)\leq-\delta H(q,p)+d\epsilon,$$
where, for notational consistency between~\eqref{eq:Langevin_intro} and~\cite{LRS}, we have written $\epsilon=\gamma \mathrm{k} T$. Let us now define the function $\widehat{H}$ for $(q,p)\in\mathbb{R}^{2d}$,
$$\widehat{H}(q,p)=1+H(q,p).$$

Then, following the computation performed in~\cite[Lemma 4.3]{LRS} one has that for all $n\geq1$ and $(q,p)\in\R^{2d}$,
\begin{equation}\label{eq:lyapunov control}
    \mathcal{L}\widehat{H}^n(q,p)\leq n\widehat{H}^{n-1}(q,p)(-\delta H(q,p))+\epsilon\left[dn\widehat{H}^{n-1}(q,p)+n(n-1)\widehat{H}^{n-2}(q,p)\left|p+\frac{\gamma-\delta}{2}q\right|^2\right].
\end{equation}
Furthermore, using the lower-bound of $H$ in~\cite[Lemma 4.2]{LRS}, one has the existence of a constant $c>0$ such that $|p+\frac{\gamma-\delta}{2}q|^2\leq c\widehat{H}(q,p)$. Moreover, the same lower-bound also ensures that $\widehat{H}(q,p)\underset{|(q,p)|\rightarrow\infty}{\longrightarrow}~\infty$. Consequently, there exists a compact set $B_n\subset\R^{2d}$ such that for all $(q,p)\notin B_n$,
$$\frac{\delta}{2}\widehat{H}(q,p)\geq\delta+d\epsilon+c\epsilon(n-1).$$ 
Therefore, it follows from~\eqref{eq:lyapunov control} that for all $(q,p)\notin B_n$,
\begin{align*}
\mathcal{L}\widehat{H}^n(q,p)&\leq n\widehat{H}^{n-1}(q,p)\left[-\delta \widehat{H}(q,p)+\delta+d\epsilon+c\epsilon(n-1)\right]\\
&\leq-\frac{n\delta}{2}\widehat{H}^{n}(q,p).
\end{align*}

Since $\ell$ is bounded on bounded sets, while $\nabla U$ is continuous, $F(q)-\gamma p$ is bounded on the compact set $B_n$ and so is $\mathcal{L}\widehat{H}^n(q,p)+\frac{n\delta}{2}\widehat{H}^n(q,p)$. Consequently, it is easy to deduce that for all $\lambda>0$ there exists an integer $n_\lambda\geq1$ and $c_{n_\lambda} \geq 0$ such that for all $(q,p)\in\R^{2d}$,
$$\mathcal{L}\widehat{H}^{n_\lambda}(q,p)\leq-\lambda\widehat{H}^{n_\lambda}(q,p)+c_{n_\lambda}\mathbb{1}_{B_{n_\lambda}}(q,p).$$
Moreover, since $H\geq0$ by~\cite[Lemma 4.2]{LRS} then $\phi_\lambda=\widehat{H}^{n_\lambda}\geq1$ satisfies the hypotheses required.
\end{proof}

\subsection*{Acknowledgements} 
The work of N.C. is partially funded by the European Union (ERC, SINGER, 101054787). Views and opinions expressed are however those of the author(s) only and do not necessarily reflect those of the European Union or the European Research Council. Neither the European Union nor the granting authority can be held responsible for them.

The work of T.L. is partially funded by the European Research Council (ERC) under the European Union’s Horizon 2020 research and innovation
programme (project EMC2, grant agreement No 810367), and also from the Agence Nationale de la Recherche through the grant ANR-19-CE40-0010-01 (QuAMProcs). 

The work of J.R. is partially supported by the Agence Nationale de la Recherche through the grants  ANR-19-CE40-0010-01 (QuAMProcs) and ANR-23-CE40-0003 (Conviviality).

\bibliographystyle{plain}
\bibliography{biblio}
\end{document}